    \theoremstyle{plain}
    \newtheorem{theorem}{Theorem}
    \newtheorem{corollary}[theorem]{Corollary}
    \newtheorem{lemma}[theorem]{Lemma}
    \newtheorem{proposition}[theorem]{Proposition}
    \theoremstyle{definition}
    \newtheorem{assumption}{Assumption}
    \newtheorem{example}[theorem]{Example}
    \newtheorem{remark}[theorem]{Remark}
    \newtheorem*{remark*}{Remark}
    \newenvironment{assump}[2][]
    {\begin{assumption}[#1]}
        {\end{assumption}}
    \newcommand{\rd}{\mathbb R^d}
    \newcommand{\R}{\mathbb R}
    \newcommand{\Z}{\mathbb Z}
    \newcommand{\pr}{\mathbf P}
    \newcommand{\e}{\mathbf E}
\begin{document}
    \title[Random walks in cones revisited]
    {Random walks in cones revisited} 
    \thanks{
        This research was partially supported  by the Ministry of Science and Higher Education of the Russian Federation, agreement 075-15-2019-1620 date 08/11/2019.
        D. Denisov was supported by a Leverhulme Trust Research Project Grant  RPG-2021-105. 
        V. Wachtel was partially supported by DFG.
    }
    \author[Denisov]{Denis Denisov}
    \address{Department of Mathematics, University of Manchester, UK}
    \email{denis.denisov@manchester.ac.uk}
    
    \author[Wachtel]{Vitali Wachtel}
    \address{Faculty of Mathematics, Bielefeld University, Germany}
    \email{wachtel@math.uni-bielefeld.de}
    
    \begin{abstract}
In this paper we continue 
our study of a multidimensional random walk 
with zero mean  and finite variance  
killed on  leaving a cone. 
We suggest a new approach that allows one to construct 
a positive harmonic function in Lipschitz cones 
under minimal moment conditions. 
This approach allows also to obtain  more accurate 
information about the behaviour of the harmonic function 
not far from the boundary of the cone.  
We also prove limit theorems under new moment conditions.   
\end{abstract}
    
    
    \keywords{Random walk, exit time, harmonic function, conditioned process}
    \subjclass{Primary 60G50; Secondary 60G40, 60F17}
    \maketitle

    
    \section{Introduction, main results and discussion}
    
    \subsection{Notation and assumptions.}
    Consider a random walk $\{S(n),n\geq1\}$ on $\rd$, $d\geq1$, where
    $$
    S(n)=X(1)+\cdots+X(n)
    $$
    and $\{X(n), n\geq1\}$ is a family of independent copies of a random
    vector $X=(X_1,X_2,\ldots,X_d)$. Denote by $\mathbb{S}^{d-1}$ the
    unit sphere in $\rd$ centred at the origin and by $\Sigma$ an open and connected subset of
    $\mathbb{S}^{d-1}$. Let $K$ be the cone generated by the rays
    emanating from the origin and passing through $\Sigma$, i.e.
    $\Sigma=K\cap \mathbb{S}^{d-1}$.
    
    Let $\tau_x$ be the exit time from $K$ of the random walk with
    starting point $x\in K$, that is,
    $$
    \tau_x=\inf\{n\ge 1: x+S(n)\notin K\}.
    $$
    In \cite{DW15, DW19} we studied asymptotics for
    $$
    \mathbf P(\tau_x>n),\quad n\to \infty,
    $$
    constructed 
    a positive harmonic function for $S(n)$ killed at leaving $K$  
    and prove conditional limit theorems for this random walk. 
    An important role in our approach was played by the harmonic function of the Brownian motion killed at the
    boundary of $K$ which can be described as the unique  (up to a constant factor), strictly positive  on $K$ solution of the
    following boundary problem:
    $$
    \Delta u(x)=0,\ x \in K\quad\text{with boundary condition }u\big|_{\partial
      K}=0.
    $$

    This function is homogeneous of a certain order $p>0$, that is 
    $u(x) = |x|^p u(x/|x|), x\in K$. 
    The function $u(x)$ and the constant $p$ can be found as follows.
    When $d=1$ then there are  only two non-trivial cones: 
    $K=(0,\infty)$ and $K=(-\infty,0)$.
    For  these cones the harmonic function is given by  $u(x)=|x|$ and, clearly, $p=1$. 
    Assume now that $d\geq2$.
    Let $L_{\mathbb{S}^{d-1}}$ be the Laplace-Beltrami operator on
    $\mathbb{S}^{d-1}$ and assume that $\Sigma$ is regular with respect to $L_{\mathbb{S}^{d-1}}$.
    Under this assumption, there exists a complete set of orthonormal eigenfunctions
    $m_j$  and corresponding eigenvalues $0<\lambda_1<\lambda_2\le\lambda_3\le\ldots$ satisfying
    \begin{align}
     \label{eq.eigen}
      L_{\mathbb{S}^{d-1}}m_j(\sigma)&=-\lambda_jm_j(\sigma),\quad \sigma\in \Sigma\\
      \nonumber m_j(\sigma)&=0, \quad \theta\in \sigma \Sigma. 
    \end{align}
    Then
    \[
    p=\sqrt{\lambda_1+(d/2-1)^2}-(d/2-1)>0
\]
    and the positive harmonic function $u(x)$ of the Brownian motion is given by
    \begin{equation}
    \label{u.from.m}
    u(x)=|x|^pm_1\left(\frac{x}{|x|}\right),\quad x\in K.
    \end{equation}
    Note that~\eqref{u.from.m} implies that 
    \begin{equation}\label{eq:u.bound.main}
        u(x)\le C|x|^p. 
    \end{equation}    
    We refer to~\cite{BS97,DeB87} for some further details 
    about the function $u$ and on the properties of the exit times of Brownian motion
    from the cone $K$. 
    
    If $S(n)$ is a one-dimensional random walk with zero mean and finite variance and if $K=(0,\infty)$ 
    then the function $V(x):=\e[-S(\tau_x)]$ is harmonic for the random walk
    killed at leaving $K$. Typically one proves the finiteness of
    $\e[-S(\tau_x)]$ via the Wiener-Hopf factorisation. 
    In the Appendix of the present paper we give an alternative proof of this fact by constructing an appropriate supermartingale. 
    This is a simplified version of the approach used in this paper 
    for construction of a positive harmonic function. 
    It is worth mentioning that if $S(n)$ is a one-dimensional oscillating random walk then, without further restrictions on its increments, the renewal function of weak descending ladder heights is harmonic.
    
        In~\cite{DW15} (for a particular case of Weyl chambers see~\cite{DW10}) we were considering cones for which the function $u$ can be extended to a harmonic function in a bigger cone.
Under this rather restrictive assumption on $K$ we  showed that if
    $\e[|X|^{p\vee(2+\varepsilon)}]$ is finite 
    for some $\varepsilon>0$ 
    then the limit
    $$
    V(x)=\lim_{n\to\infty}\e[u(x+S(n));\tau_x>n]
    $$
    is finite for all $x\in K$ and  this function is harmonic for the random walk killed at leaving 
    $K$, that is 
    $$
    V(x)=\e[V(x+S(1));\tau_x>1],\quad x\in K.
    $$
    We  also showed that 
    $$
    \lim_{r\to\infty}\frac{V(r\sigma)}{u(r\sigma)}=1
    $$
    for every $\sigma\in\Sigma$. This relation does not give us any information on the behaviour of 
    $V$ close to the boundary of the cone $K$.
    
        Next in~\cite{DW19} we  suggested two alternative constructions of $V$. These new constructions allow one to weaken the geometric restrictions on $K$: it suffices to assume that $K$ is either
    convex or $C^2$ and star-like. A further advantage of these constructions is the estimate
    $$
    |V(x)-u(x)|\le C\left(1+\frac{|x|^{p}}{(d(x))^\gamma}\right),
    $$
    where $d(x)=\mathrm{dist}(x,\partial K)$ and $\gamma$ is a sufficiently small positive number. This implies that $V(x)\sim u(x)$ if $x\to\infty$
    in a such way that $d(x)\ge |x|^{1-\delta}$ for a sufficiently small
    $\delta>0$.
    
    The purpose of the present paper is twofold.
    First, we will  relax the moment assumption $\e[|X|^{p\vee(2+\varepsilon)}]<\infty$. It is worth mentioning that if $p>2$ then, as it has been noticed in \cite{DW15}, the assumption $\e[|X|^p]<\infty$ is optimal. Therefore, the question is whether one can replace $\e[X^{2+\varepsilon}]<\infty$ by a weaker condition in the case when $p\le2$.
    Second, we will analyze the behaviour of the harmonic function $V(x)$
    for all $x$ such that $d(x)\to\infty$.
    
    In this paper we use supermartingale to construct harmonic functions. 
    This approach similar to the approach in~\cite{Var99, Var00,Var01},  
    who used and submartingales to obtain heat kernel estimates for 
    random walks in cones and Lipschitz domains satisfying rather strong moment assumptions.

    \subsection{Main result}

    It turns out that if one wishes to impose weaker 
    moment assumptions then more restrictive smoothness conditions on $K$ are required. 
    Throughout we will impose  
    \begin{assump}{(G)}\label{ass-g}
        Cone $K$ is Lipschitz and star-like.  
        There exists a constant $C>0$ such that 
        \begin{align}\label{eq:harmonic.boundary}
               u(x) &\le  C|x|^{p-1}d(x),\quad x\in K\\ 
            \label{eq:harmonic.boundary.lower}
                  u(x) &\ge C|x|^{p-1}d(x),\quad x\in K. 
        \end{align}        
    \end{assump}    
    \begin{remark} 
    Assumption~\ref{ass-g} holds when 
     $\Sigma$ is $C^{1,\alpha}$ for 
    $\alpha\in(0,1]$, see~\cite{GT2001} for the definition of $C^{1,\alpha}$. 
     It holds even in a slightly  more general case 
     when $\Sigma$ is a Lyapunov-Dini surface, as can be seen 
     from~\cite{Widman1967}.  
     To prove~\eqref{eq:harmonic.boundary} we can consider 
     first the bounded region $D=\{x\in K: 0.5<|x|<2\}$. 
     Then the bound  $u(x)\le Cd(x)$ follows 
     from~\cite[Theorem 2.4]{Widman1967}. For that 
     we need to notice that the bound in this result is proved locally. 
     Since $u=0$ is continuously differentiable 
     at the boundary $\{x\in\partial K\cap: 0.75<|x|<1.25|x| \}$. 
     Using the homogeneity of $u$ we can extend the bound to the whole cone.
     Similarly the lower bound follows from~\cite[Theorem 2.5]{Widman1967} 
     by noticing that this theorem holds locally as well 
     and implies immediately the lower bound for the Green function in the region $D$.  
     Then, the Boundary Harnack Principle implies the bound for harmonic functions. 
     Finally we use the homogeneity of $u$ to extend the bound to the whole cone.
    \end{remark}   
    We will also need the following moment assumptions. 
    \begin{assump}{(M)}\label{ass-m}\ 
        \begin{enumerate}
            \item[(M1)]\label{ass-M1} $\e[X_i]=0, i=1,\ldots d$;
            \item[(M2)]\label{ass-M2}  $\e[X_i^2]=1, i=1,\ldots d$ and $\e[X_iX_j]=0,1\le i<j\le d$;
            \item[(M3)]\label{ass-M3} $\e[|X|^2\log(1+|X|)]<\infty$. In the case $p>2$ we additionally assume that $\e[|X|^p]$  is finite. 
        \end{enumerate}   
    \end{assump}


    \begin{theorem}
    \label{thm:C2}
    Let the assumptions~\ref{ass-g} and~\ref{ass-m} hold. 
    Then the function 
    \begin{equation}
    \label{eq:def.V}
    V(x):=\lim_{n\to\infty}\mathbf{E}\left[u(x+S(n));\tau_x>n\right]
    \end{equation}
    is finite and harmonic for $\{S(n)\}$ killed at leaving $K$, i.e.,
    $$
    V(x)=\mathbf{E}\left[V(x+S(n));\tau_x>n\right],\quad x\in K,\ n\ge1.
    $$
    Furthermore, if $p\ge1$ then 
    $$
    V(x)\sim u(x)\quad\text{for }x\in K\ \text{with } d(x)\to\infty,
    $$
    and 
    \[
        \sup_{x\in K: |x|=o(n^{p/(2(p-1))})}
        \left| 
        \frac{\e \left[u(x+S(n));\tau_x>n\right] - V(x)}{1+u(x)}
        \right| 
        \to 0,\quad  n\to \infty. 
    \]
    If $p<1$ then 
    \[
    V(x)\sim u(x)\quad\text{for }x\in K\ \text{when } d(x)|x|^{p-1}\to\infty.  
    \] 
    
    \end{theorem}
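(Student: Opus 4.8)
The plan is to realise $V$ as the limit of the sequence $V_n(x):=\mathbf E[u(x+S(n));\tau_x>n]$ by showing this sequence is "almost" a supermartingale sequence, so that it converges, and then to separately control the error $V_n(x)-V(x)$ uniformly on the stated ranges of $x$. The central object is a suitable modification $\widehat u$ of $u$ inside $K$, pushed slightly away from the boundary, which one turns into a genuine supermartingale for the killed walk by exploiting harmonicity of $u$ for Brownian motion together with the near-boundary estimates \eqref{eq:harmonic.boundary}--\eqref{eq:harmonic.boundary.lower}. The key analytic input will be a one-step estimate of the form $\mathbf E[u(x+X);\,x+X\in K]\le u(x)+(\text{small correction})$, valid once $d(x)$ is not too small, which is where Assumption~\ref{ass-g} and the moment condition (M3) enter: the Taylor expansion of $u$ to second order produces the Laplacian term which vanishes by $\Delta u=0$, and the remainder is integrable precisely because $\mathbf E[|X|^2\log(1+|X|)]<\infty$ when $p\le 2$ (or $\mathbf E[|X|^p]<\infty$ when $p>2$), after splitting the expectation according to whether $|X|$ is smaller or larger than, say, $d(x)/2$.

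First I would establish the near-boundary two-sided bound $u(x)\asymp |x|^{p-1}d(x)$ as a black box from Assumption~\ref{ass-g}, and record the gradient estimate $|\nabla u(x)|\le C|x|^{p-1}$ that follows from interior elliptic estimates applied to the homogeneous harmonic function $u$; together these give $|\nabla u(x)|\le C u(x)/d(x)$. Next I would prove the one-step inequality: writing $u(x+X)-u(x)=\nabla u(x)\cdot X+\tfrac12 X^\top D^2u(\xi)X$ on the event that $x+X$ stays in a region where $u$ is smooth, using $\mathbf E[X]=0$, $\Delta u=0$, and $\mathbf E[X_iX_j]=\delta_{ij}$ to kill the leading terms, and bounding the contribution of large jumps and of excursions near $\partial K$ via \eqref{eq:u.bound.main} and \eqref{eq:harmonic.boundary}. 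This yields $\mathbf E[u(x+X);x+X\in K]\le u(x)+\varepsilon(x)$ with $\sum$-type control on $\varepsilon$; iterating and using optional stopping shows $V_n(x)$ is bounded and (after a standard monotonicity-in-$n$ or Cauchy argument) convergent, so $V(x)$ is finite, and harmonicity $V(x)=\mathbf E[V(x+S(n));\tau_x>n]$ follows by the Markov property together with a dominated/monotone convergence argument — exactly as in \cite{DW15,DW19}, but now under the weaker moments.

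For the asymptotics $V(x)\sim u(x)$, I would bound $|V_n(x)-u(x)|$ by first writing $u(x)-V_n(x)=\mathbf E[u(x+S(n\wedge\tau_x))-u(x+S(n));\tau_x\le n]$ plus a martingale-type remainder, then splitting according to the first time the walk comes within distance $\delta|x|$ (resp.\ $\delta|x|^{2-p}$ when $p<1$) of the boundary versus the event that it makes an atypically large jump; on the good event one uses the interior estimate to get a bound of order $u(x)\cdot o(1)$ as $d(x)\to\infty$ (resp.\ $d(x)|x|^{p-1}\to\infty$), and the bad events are controlled by exit-time estimates for the Brownian approximation from \cite{DW15} and by Chebyshev/truncation using (M3). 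The uniform statement $\sup_{|x|=o(n^{p/(2(p-1))})}|(\mathbf E[u(x+S(n));\tau_x>n]-V(x))/(1+u(x))|\to 0$ requires, in addition, a uniform rate for the tail $\sum_{k>n}\varepsilon(x+\cdot)$ in the supermartingale estimate; the scaling $|x|=o(n^{p/(2(p-1))})$ is exactly what makes $u(x)=o(n^{p/2})$, i.e.\ the walk has enough "room" to reach its diffusive scale before the relevant contributions decay, and one quantifies this via the local CLT / coupling bounds already available for $S(n)$ conditioned to stay in $K$.

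The main obstacle will be the one-step supermartingale estimate near the boundary when $p\le 2$: the second-order Taylor remainder $X^\top D^2u(\xi)X$ is genuinely dangerous because $|D^2u(\xi)|$ can blow up like $|x|^{p-2}/d(\xi)$ as $\xi$ approaches $\partial K$, so a naive bound loses the logarithmic margin. The resolution is to not Taylor-expand all the way to second order near the boundary but instead to use the sharp linear bound $u(x+X)\le u(x)+|\nabla u(x)|\,|X|\le u(x)+Cu(x)|X|/d(x)$ there, combine it with the Brownian harmonicity only on the bulk region $\{d(x+X)\ge d(x)/2\}$, and absorb the boundary layer into an error term whose expectation is $O(u(x)\,\mathbf E[|X|^2\mathbf 1_{|X|>d(x)/2}]/d(x)^2)$ — this is exactly $O(u(x)\cdot(\log d(x))^{-1}\cdot o(1))$ under (M3), which sums. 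Getting the constants and the region decomposition to fit together cleanly, and making everything uniform for the last display, is the technical heart of the argument.
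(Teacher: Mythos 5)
Your one-step estimate is essentially the paper's Lemma~\ref{lem:bound.f}: you Taylor-expand $u$, kill the leading terms by $\Delta u=0$ and the covariance conditions, split on $\{|X|\le d(x)/2\}$, and bound $\nabla u$ and $D^2u$ via $u(x)/d(x)$ and $u(x)/d(x)^2$. That part is correct and matches the paper. But the decisive step is missing, and the phrase ``which sums'' at the end is precisely where the gap sits. After obtaining a per-step error $f(x)=\e[u(x+X)]-u(x)$ of size $o(\beta(x))$ with $\beta(x)=|x|^{p-1}\gamma(d(x))/d(x)$, you still must control the \emph{accumulated} error $\e\bigl[\sum_{k<\tau_x}|f(x+S(k))|\bigr]$, which is an expectation against the Green function of the killed walk near the boundary of $K$. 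This does not ``sum'' automatically: $\beta$ is largest exactly where the walk spends a lot of its time before exiting, and nothing in your sketch provides an a priori bound on the expected occupation measure near $\partial K$.

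The paper closes this gap by introducing a corrector: $U_\beta(y)=\int_K G(x,y)\beta(x)\,dx$, where $G$ is the \emph{Brownian} Green function of $K$, so that $\Delta U_\beta=-\beta$. The Green-function estimates in Lemmas~\ref{lem:ghat.1}--\ref{lem:ghat.4} give $U_\beta(y)\le\varepsilon(R)u(y)$ for $d(y)>R$, and a second Taylor argument (Lemma~\ref{lem:supermartingale.drift}) shows $\e[U_\beta(x+X)]-U_\beta(x)\approx -\tfrac12\beta(x)$, i.e.\ the corrector's one-step drift is strictly negative and dominates $f$. Only then does the shifted process $u(x+Rx_0+S(n))+U_\beta(x+Rx_0+S(n))$ plus a running $\beta$-sum become a genuine nonnegative supermartingale (Lemma~\ref{lem:submart}), and the optional-stopping inequality for that supermartingale yields the crucial bound $\e\bigl[\sum_{k<\tau_x}\beta(x+Rx_0+S(k))\bigr]\le 3V_\beta(x)$ (Lemma~\ref{lem:beta_bound}), hence $\e\bigl[\sum_{k<\tau_x}|f|\bigr]\le\varepsilon(R)u(x+Rx_0)$. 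Your ``modification $\widehat u$ of $u$, pushed slightly away from the boundary'' never specifies this object; the shift $x\mapsto x+Rx_0$ alone does not produce a supermartingale, since $f(x+Rx_0)$ can still be positive, and neither does any pointwise modification of $u$ unless it has a built-in negative drift of the right size. Without $U_\beta$ you cannot justify convergence of $V_n(x)$ (boundedness alone is not enough, and $V_n$ is not monotone in general — the paper's Example~2 shows $V_n$ can even diverge), and you certainly cannot get the uniform $o(1+u(x))$ convergence rate, which in the paper rests on the explicit representation $V(x)=u(x+Rx_0)-\e[u(x+Rx_0+S(\tau_x))]+\e[\sum_{k<\tau_x}f]$ together with the supermartingale-based moment bounds \eqref{eq:maximum} and Lemma~\ref{lem:ub1}. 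The one-step Taylor computation is necessary but not the heart of the argument; the heart is the Green-function corrector and the resulting supermartingale, and these are absent from your proposal.
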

    In our earlier papers \cite{DW15,DW19} it has been shown that 
    the function $V(x)$ from \eqref{eq:def.V} is well defined  
    under a slightly stronger moment condition:
    $\e[|X|^{2+\delta}]<\infty$ for some $\delta>0$ instead of $\e[|X|^2\log(1+|X|)]<\infty$. 
    We will illustrate in examples below that the latter condition is optimal in some sense. 
    On the other hand, in Theorem~\ref{thm:C2} we impose different 
    geometric restrictions on the cone $K$. Namely, we replace $C^2$ assumption on the cone 
    by a milder Assumption~\ref{ass-g}. However the alternative convexity 
    assumption seems to require additional moments and is not covered by 
    Theorem~\ref{thm:C2}. 
    An advantage of the current approach is that it allows one to extend the 
    arguments to the case of Markov chains in a more straightforward way, 
    see~\cite{DZ22}.  
    Construction and behaviour of harmonic functions of Markov chains were considered various situations 
    by many authors, see~\cite{DKW, DW15b,GLL16,GLP16,MS19} and references 
    therein for some of publications.  
    Another improvement of Theorem~\ref{thm:C2} over~\cite{DW15,DW19} is 
    a more accurate information about the asymptotics behaviour of $V(x)$ near the boundary 
    of the cone. 
\begin{theorem}
\label{thm:smooth_and_convex} A
Let the assumptions~\ref{ass-g} and~\ref{ass-m} hold. 
Assume also that  $p\ge1$. 
Then one has: 
\begin{itemize}
 \item[(a)]there exist positive constants $C$ and $R$ such that
 \begin{equation}
 \label{eq:s_and_c.1}
 \pr(\tau_x>n)\le C\frac{u(x+Rx_0)}{n^{p/2}}
 \quad\text{for all }x\in K;
 \end{equation}
 \item[(b)] uniformly in  $x\in K$ such that $|x|\le \frac{\sqrt{n}}{\log n}$, 
 \begin{equation}
 \label{eq:s_and_c.2}
 \pr(\tau_x>n)\sim \varkappa\frac{V(x)}{n^{p/2}}, 
 \quad n\to\infty,  
 \end{equation}
 and
 \begin{equation}
 \label{eq:s_and_c.3}
 \pr\left(\frac{x+S(n)}{\sqrt{n}}\in D\Big|\tau_x>n\right)
 \rightarrow c\int_D u(z)e^{-|z|^2/2}dz
 , \quad n\to\infty,  
 \end{equation}
 for every compact $D\subset K$.
\end{itemize}
\end{theorem}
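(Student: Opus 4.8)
The plan is to compare the killed random walk with Brownian motion killed on leaving $K$, and to convert facts about the Brownian harmonic function $u$ into facts about $V$ by means of Theorem~\ref{thm:C2}. The ingredients are: (i)~a coupling of $(S(k))_{k\le n}$ with a Brownian path $(B(t))_{t\le n}$, obtained by truncating the increments at a level $\theta\sqrt{n}$ and applying a strong approximation to the truncated walk, for which $\max_{k\le n}|S(k)-B(k)|/\sqrt{n}\to0$ in probability; the second moment $\e[X_i^2]=1$ already suffices here, the logarithmic moment in (M3) being needed only where Theorem~\ref{thm:C2} uses it, i.e.\ to replace $\e[|X|^{2+\delta}]<\infty$; (ii)~the classical asymptotics for Brownian motion in $K$, namely $\pr_y(\tau^{\mathrm{bm}}_K>t)\sim\varkappa\,t^{-p/2}u(y)$ as $y/\sqrt{t}\to0$ in $K$, and, for compact $D\subset K$, $\pr_y(\tau^{\mathrm{bm}}_K>t,\ B(t)/\sqrt{t}\in D)\sim\varkappa\,t^{-p/2}u(y)\,c\int_D u(z)e^{-|z|^2/2}\,dz$ with $c=(\int_K u(z)e^{-|z|^2/2}\,dz)^{-1}$, both flowing from the eigenfunction expansion of the Dirichlet heat kernel of $K$; (iii)~the two-sided bound $u(y)\asymp|y|^{p-1}d(y)$ of Assumption~\ref{ass-g}, which makes ``surviving up to time $n$ while within distance $\delta\sqrt{n}$ of $\partial K$'' cost a factor $O(\delta^{\kappa})$ for some $\kappa>0$; and (iv)~from Theorem~\ref{thm:C2}: $V$ is finite and harmonic for the killed walk, $V(x)\le C(1+|x|^p)$ (by \eqref{eq:u.bound.main} and $\e[|X|^p]<\infty$, using $p\ge1$), $V\sim u$ when $d(x)\to\infty$, and --- decisively --- $\e[u(x+S(m));\tau_x>m]\to V(x)$ uniformly over $|x|=o(m^{p/(2(p-1))})$.

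For part~(a): combining $V\sim u$ for $d(x)\to\infty$ with $u(y)\asymp|y|^{p-1}d(y)$ yields a bulk lower bound $V(y)\ge c\,n^{p/2}$ whenever $d(y)\ge\varepsilon\sqrt{n}$ (for $n$ large; such $y$ satisfy $|y|\ge d(y)\ge\varepsilon\sqrt{n}$), while comparing, near $\partial K$, with the one-dimensional walk in the inner-normal direction --- made uniform through Assumption~\ref{ass-g} --- gives $\sup\{\pr(\tau_y>n):y\in K,\ d(y)<\varepsilon\sqrt{n}\}\le C\varepsilon^{\kappa}$. Writing $\pr(\tau_x>n)=\e[\pr(\tau_{x+S(\lfloor n/2\rfloor)}>n-\lfloor n/2\rfloor);\tau_x>\lfloor n/2\rfloor]$ and splitting according to whether $x+S(\lfloor n/2\rfloor)$ is in the bulk or within $\varepsilon\sqrt{n}$ of $\partial K$, harmonicity of $V$ with the bulk lower bound bounds the first piece by $CV(x)/n^{p/2}$, while the Markov property with the near-boundary bound bounds the second by $C\varepsilon^{\kappa}\pr(\tau_x>\lfloor n/2\rfloor)$; choosing $\varepsilon$ so small that $C\varepsilon^{\kappa}2^{p/2}<1$ and summing the resulting geometric recursion along dyadic times gives $\pr(\tau_x>n)\le CV(x)/n^{p/2}$. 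Finally $V(x)\le C(1+|x|^p)\le C\,u(x+Rx_0)$ for $R$ large, by the lower bound in Assumption~\ref{ass-g} with $x_0$ a fixed interior direction; this is \eqref{eq:s_and_c.1}, and passing from $u(x)$ to $u(x+Rx_0)$ is exactly what keeps the bound meaningful for $x$ on or near $\partial K$, where $u(x)$ is small.

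For part~(b), fix a small $\varepsilon>0$, set $m=\lfloor\varepsilon n\rfloor$, and decompose
\[
\pr(\tau_x>n)=\e[\pr(\tau_{x+S(m)}>n-m);\tau_x>m].
\]
Using the crude bound \eqref{eq:s_and_c.1} with (iii) and (M3), one discards from $\{\tau_x>m\}$ the parts where $|S(m)|>C\sqrt{m}$ or $d(x+S(m))\le\eta\sqrt{m}$, both negligible as $C\to\infty$ and $\eta\to0$; on the complement $y:=x+S(m)$ satisfies $|y|\le C'\sqrt{\varepsilon n}$ and $d(y)\ge\eta\sqrt{\varepsilon n}$, the latter dominating the coupling error $o(\sqrt{n})$, so the coupling (i) and the boundary estimate (iii) give $\pr(\tau_y>n-m)=(1+o(1))\pr_y(\tau^{\mathrm{bm}}_K>n-m)$ uniformly in such $y$, and, since $|y|/\sqrt{n}\le C'\sqrt{\varepsilon}$ is small, (ii) gives $\pr_y(\tau^{\mathrm{bm}}_K>n-m)=\varkappa\,(n-m)^{-p/2}u(y)\,(1+O(\psi(\varepsilon)))$, where $\psi(\varepsilon)\to0$ as $\varepsilon\to0$, uniformly in such $y$. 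Feeding this back and invoking (iv) (legitimate because $m\to\infty$ and $|x|\le\sqrt{n}/\log n=o(m^{p/(2(p-1))})$ for $p\ge1$) gives
\[
\pr(\tau_x>n)=\frac{\varkappa\,V(x)}{n^{p/2}}\,(1+O(\psi(\varepsilon)))\,(1+o(1)),\qquad n\to\infty,
\]
uniformly in $|x|\le\sqrt{n}/\log n$; letting $\varepsilon\to0$ yields \eqref{eq:s_and_c.2}. For \eqref{eq:s_and_c.3} one runs the same decomposition with $\ind\{(x+S(n))/\sqrt{n}\in D\}$ inserted in the expectation and the Brownian heat-kernel asymptotics of (ii) used in place of the exit probability (noting $\sqrt{n-m}\sim\sqrt{n}$); dividing by \eqref{eq:s_and_c.2} cancels $\varkappa V(x)/n^{p/2}$ and leaves $c\int_D u(z)e^{-|z|^2/2}\,dz$.

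The main obstacle is the uniformity of the Brownian comparison: the target $\varkappa n^{-p/2}u(y)$, hence $\varkappa n^{-p/2}V(x)$, degenerates both as $y/\sqrt{n}\to0$ and as $y$ approaches $\partial K$, so the coupling error $o(\sqrt{n})$ and the near-boundary fattening at scale $\delta\sqrt{n}$ must be made negligible \emph{relative to} $u(y)/n^{p/2}$, and this uniformly over the full range $|x|\le\sqrt{n}/\log n$, in particular for $x$ close to $\partial K$ where $V(x)$ itself is small; Assumption~\ref{ass-g}, through $u(y)\asymp|y|^{p-1}d(y)$, is what turns proximity to $\partial K$ into a genuine $O(\delta^{\kappa})$ saving and makes this possible. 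A further technical point, handled via part~(a) together with (M3), is to keep the contribution of $\{|S(m)|>C\sqrt{m}\}$ uniformly negligible.
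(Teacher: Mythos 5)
Your overall plan is right---Brownian comparison plus the uniform convergence from Theorem~\ref{thm:C2}---but the proposal conceals the technically hard parts of the paper's argument and contains a concrete false step. The paper's proof of part~(a) (Lemma~\ref{lem:ub1}) is built entirely around the non-negative supermartingale $Y^{(c)}_n$ of Lemma~\ref{lem:submart} (equivalently, the perturbed function $V_\beta(y)=u(Rx_0+y)+U_\beta(Rx_0+y)$), which you never invoke; see below.

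\textbf{Part (a): a false inequality.} You write ``$V(x)\le C(1+|x|^p)\le Cu(x+Rx_0)$ for $R$ large''. The second inequality is wrong. For $x\in K$ near $\partial K$ with $|x|$ large, $d(x+Rx_0)\asymp R$, so by Assumption~\ref{ass-g} $u(x+Rx_0)\asymp R|x|^{p-1}\ll |x|^p$. Thus $C(1+|x|^p)\not\le Cu(x+Rx_0)$ once $|x|\gg R$. The true inequality $V(x)\le Cu(x+Rx_0)$ does hold, but the paper obtains it from the explicit representation~\eqref{eq:defV} of $V$ (or, equivalently, from the supermartingale $V_\beta$), not from comparing with $1+|x|^p$.

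\textbf{Part (a): the near-boundary bound is not established.} The key estimate you assert, $\sup\{\pr(\tau_y>n): y\in K,\ d(y)<\varepsilon\sqrt{n}\}\le C\varepsilon^\kappa$, is needed for \emph{all} $y$, including $|y|\gg\sqrt n$ (after conditioning on $\tau_x>n/2$, the walk can be anywhere). The Brownian bound $\pr(\tau_y^{bm}>t)\le Cu(y)/t^{p/2}$ from the paper is useless there because $u(y)/t^{p/2}$ blows up; and the heuristic one-dimensional projection onto an ``inner normal'' is not a rigorous argument in a Lipschitz cone. The paper does not prove such a pointwise bound. Instead it controls the near-boundary contribution by a block argument over the whole interval $[n/2,3n/4]$: the walk must remain in the tube $\{d< b_n\}$ for $\ge n/4$ steps, and each block of length $c_n=[b_n^2]$ has a uniformly positive escape probability (Lemma~14 of \cite{DW15}), giving $\pr(\tau_x>n,\nu_n>3n/4)\le\pr(\tau_x>n/2)e^{-cn/b_n^2}$. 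The dyadic iteration then closes because $e^{-\gamma_N}\to0$, not because of a fixed factor $\varepsilon^\kappa$. Your recursion would need the missing near-boundary bound, and it has no mechanism to handle the region $|x+S(n/2)|\gg\sqrt n$ near the boundary at all.

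\textbf{Part (b): the case $p\ge2$ is glossed over.} You write that one discards $\{|S(m)|>C\sqrt m\}$ and $\{d(x+S(m))\le\eta\sqrt m\}$, ``both negligible as $C\to\infty$ and $\eta\to0$'' by (a) and~(M3). That is exactly where the difficulty lies, and it is why the paper has two separate sections for $1\le p<2$ and $p\ge2$. For $1\le p<2$, the far-out region $K_3$ is dispatched with Chebyshev and $\e[|S(\tau_x\wedge m)|^2]\lesssim u(x+Rx_0)m^{1-p/2}$, producing a factor $A^{p-2}\to0$ as $A\to\infty$. That factor does \emph{not} tend to $0$ when $p\ge2$. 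For $p\ge2$ the paper instead uses Sakhanenko's strong approximation to sharpen the coupling, introduces the scales $\delta_n=\log^{-p/8}n$, $m_n=n/\log^{1/4}n$, $l=\varepsilon_n^3 n/\log n$, truncates the increments at $t_n$, and proves $\e[u(x+Rx_0+S(m));\tau_x>m,\ x+S(m)\in K_1\cup K_3]=o(u(x+Rx_0))$ by a sequence of estimates (\eqref{eq:e11}--\eqref{eq:e15}) using Fuk--Nagaev, Burkholder--Davis--Gundy, and the $|X|^2\log|X|$-moment. None of this is present in your sketch, and it does not follow from~(a) and~(M3) by a soft argument.

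In sum: your decomposition for (b) in the sub-critical range $1\le p<2$ is essentially the paper's, and invoking the uniform convergence of Theorem~\ref{thm:C2} at time $m$ is the right move. But for (a) you need the supermartingale $V_\beta$ (or the representation~\eqref{eq:defV}) rather than the faulty $|x|^p$-comparison, plus the block argument to control the near-boundary/far-out regime; and for (b) with $p\ge2$ the entire second half of the paper's proof is missing.
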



\subsection{Discussion of the $|X|^2\log(1+|X|)$-condition.}
In this section we shall consider some specific examples of random walks, which will show that the condition $\e[|X|^2\log(1+|X|)]<\infty$ is rather close to the minimal one.
We restrict our attention to the cone
$$
K=\left\{x\in\R^2:\, |x_2|<x_1\right\},
$$
which is a $2$-dimensional Weyl chamber of type $D$. The harmonic function of the Brownian motion killed at leaving $K$ is easy to compute:
$$
u(x)=x_1^2-x_2^2.
$$
Therefore, $p=2$ for this cone.

Let $W=(W_1,W_2)$ be a random vector with the following distribution:
\begin{align}
&\pr(W_1=0,W_2=1)=\pr(W_1=0,W_2=-1)=\frac{1}{4},\\
&\pr(W_1=k,W_2=0)=p_k,\ k=-1,0,1,2,\ldots,
\end{align}
where the numbers $\{p_k\}$ are such that 
$$
\sum_{k=-1}^\infty p_k=\frac{1}{2},\quad 
\sum_{k=-1}^\infty kp_k=0\quad\text{and}\quad 
\sum_{k=-1}^\infty k^2p_k=\frac{1}{2}.
$$
Then one has the equalities 
\begin{equation}
\label{eq:prop.W}
\e[W_1]=\e[W_2]=\e[W_1W_2]=0
\quad\text{and}\quad 
\e[W_1^2]=\e[W_2^2]=\frac{1}{2}.
\end{equation}

\begin{example}
\label{Example1}
We assume that the increments of $S(n)$ are independent copies of the vector $\sqrt{2}W$. It follows from \eqref{eq:prop.W}
that the vector $\sqrt{2}W$ has zero mean and that its covariance matrix is equal to the identity matrix. This implies that the sequence 
$u(x+S(n))$ is a martingale. Furthermore, it is immediate from the definition of $W$ that 
$$
u(x+S(\tau_x))=0
\quad\text{for every}\quad
x\in (\sqrt{2}\Z^2)\cap K.
$$
Using this observation, we obtain 
\begin{align*}
&\e[u(x+S(n)){\rm 1}\{\tau_x>n\}|\mathcal{F}_{n-1}]\\
&\hspace{1cm}=\e[u(x+S(n)){\rm 1}\{\tau_x>n-1\}
-u(x+S(n)){\rm 1}\{\tau_x=n\}|\mathcal{F}_{n-1}]\\
&\hspace{1cm}={\rm 1}\{\tau_x>n-1\}\e[u(x+S(n))|\mathcal{F}_{n-1}]
-\e[u(x+S(\tau_x)){\rm 1}\{\tau_x=n\}|\mathcal{F}_{n-1}]\\
&\hspace{1cm}=u(x+S(n-1)){\rm 1}\{\tau_x>n-1\}.
\end{align*}
In particular,
$$
u(x)=\e[u(x+S(1));\tau_x>1], \quad x\in (\sqrt{2}\Z^2)\cap K.
$$
Thus, $u(x)$ is harmonic for $S(n)$ killed at leaving $K$.
So, a harmonic function may exist without further moment restrictions. 
We now show that the relation 
\begin{equation}
\label{ex.1}
\pr(\tau_x>n)\sim\varkappa\frac{u(x)}{n}
\end{equation}
may fail in this case. To this end we assume that the numbers $\{p_k\}$ are such that 
\begin{equation}
\label{ex.2}
\log m \e[W_1^2;W_1\ge m]\to\infty
\quad\text{as}\quad m\to\infty.
\end{equation}
Consider now the stopping time 
$$
\sigma_n:=\inf\{k\ge1:\, X_1(k)\ge 2n^2\}.
$$
It is easy to see that if $\tau_x>\sigma_n=j$ then 
$$
S_1(k)\ge X_1(j)-\sqrt{2}(k-j)
\quad\text{and}\quad
|S_2(k)|\le\sqrt{2}k
\quad\text{for all }\quad k\ge j.
$$
In particular,
$$
\{\tau_x>n,\sigma_n=j\}=\{\tau_x>j-1,\sigma_n=j\}
$$
and, for all $n$ large enough, 
$$
u(x+S(n))\ge X_1^2(j)-2(|x|+\sqrt{2}n)^2\ge\frac{X_1^2(j)}{2}
\quad\text{on the event}\quad
\{\tau_x>j-1, \sigma_n=j\}.
$$
Therefore,
\begin{align*}
\e[u(x+S(n));\tau_x>n]
&\ge \e[u(x+S(n));\tau_x>n,\sigma_n\le n]\\
&\ge\sum_{j=1}^{n}\pr(\tau_x>j-1,\sigma_n>j-1)
\frac{1}{2}\e[X_1^2(j);X_1(j)\ge 2n^2]\\
&=\e[W_1^2;W_1\ge \sqrt 2 n^2]\sum_{j=1}^{n}\pr(\tau_x>j-1,\sigma_n>j-1).
\end{align*}
Furthermore, for every $j\le n$ one has 
\begin{align*}
\pr(\tau_x>j-1,\sigma_n>j-1)
&\ge \pr(\tau_x>j-1)-\pr(\sigma_n\le j-1)\\
&\ge \pr(\tau_x>j-1)-(j-1)\pr(W_1\ge n^2).
\end{align*}
Therefore,
\begin{align}
\label{ex.3}
\nonumber
\e[u(x+S(n));\tau_x>n]
&\ge \e[W_1^2;W_1\ge n^2]\sum_{j=1}^{n}\pr(\tau_x>j-1)
-n^2\pr(W_1\ge n^2)\\
&= \e[W_1^2;W_1\ge n^2]\sum_{j=1}^{n}\pr(\tau_x>j-1)+o(1).
\end{align}
If \eqref{ex.1} holds then 
$$
\sum_{j=1}^{n}\pr(\tau_x>j-1)\sim \varkappa u(x)\log n.
$$
Plugging this into \eqref{ex.3} and taking into account
\eqref{ex.2}, we conclude that 
$$
\e[u(x+S(n));\tau_x>n]\to\infty.
$$
This contradicts to the harmonicity of $u(x)$. Consequently, \eqref{ex.1} can not hold for walks satisfying \eqref{ex.2}. 
\hfill$\diamond$
\end{example}

We next show that the limit of the sequence $\e[u(x+S(n));\tau_x>n]$
may be infinite.\
\begin{example}
\label{Example2}
Now we assume that the increments of $S(n)$ are independent copies of the vector $\sqrt{2}(W_2,W_1)$. In this case we have 
\begin{equation}
\label{ex.4}
u(x+S(\tau_x))\le0
\quad\text{for all}\quad 
x\in(\sqrt{2}\Z^2)\cap K.
\end{equation}
By the optional stopping theorem for the martingale $u(x+S(n))$,
\begin{align*}
u(x)
&=\e[u(x+S(\tau_x\wedge n))]\\
&=\e[u(x+S(n));\tau_x>n]+\e[u(x+S(\tau_x));\tau_x\le n].
\end{align*}
Consequently,
$$
\e[u(x+S(n));\tau_x>n]=u(x)-\e[u(x+S(\tau_x));\tau_x\le n].
$$
\eqref{ex.4} allows one to apply the monotone convergence theorem.
As a result we have 
\begin{equation}
\label{ex.5}
\lim_{n\to\infty}\e[u(x+S(n));\tau_x>n]
=u(x)-\e[u(x+S(\tau_x))].
\end{equation}

For every $y\in K$ we have 
\begin{align*}
\e[-u(y+X);y+X\notin K]
&=\e[(y_2+X_2)^2-(y_1+X_1)^2;y+X\notin K]\\
&=\e[(y_2+X_2)^2-(y_1+X_1)^2;X_2>y_1-y_2]\\
&=\e[(y_2+X_2)^2-y_1^2;X_2>y_1-y_2]\\
&\ge\frac{1}{2}\e[(y_2+X_2)^2;X_2>\sqrt{2}y_1-y_2]\\
&\ge\frac{1}{8}\e[X_2^2;X_2>2(|y_1|+|y_2|)].
\end{align*}
Therefore,
\begin{align}\label{ex.5a}
\nonumber
&\e[-u(x+S(\tau_x))]\\
\nonumber
&=\sum_{n=0}^\infty\int_K\pr(x+S(n)\in dy;\tau_x>n)
\e[-u(y+X);y+X\notin K]\\
\nonumber
&\ge \frac{1}{8}\sum_{n=0}^\infty\int_K\pr(x+S(n)\in dy;\tau_x>n)
\e[X_2^2;X_2>2(|y_1|+|y_2|)]\\
\nonumber
&= \frac{1}{4}\sum_{n=0}^\infty\int_K\pr(x+S(n)\in dy;\tau_x>n)
\e[W_1^2;W_1>\sqrt{2}(|y_1|+|y_2|)]\\
&\ge \frac{1}{4}
\sum_{n=0}^\infty \pr(|x_1+S_1(n)|+|x_2+S_2(n)|\le n^2;\tau_x>n)\e[W_1^2;W_1>\sqrt{2}n^2].
\end{align}
It is immediate from the definition of the vector $W$ that 
$$
|x_2+S_2(n)|<x_1+S_1(n)\le x_1+\sqrt{2}n
$$
on the event $\{\tau_x>n\}$. This yields the existence of $n_0=n_0(x)$ such that 
\begin{equation}
\label{ex.5b}
\pr(|x_1+S_1(n)|+|x_2+S_2(n)|\le n^2;\tau_x>n)
=\pr(\tau_x>n),\quad n\ge n_0.
\end{equation}
If \eqref{ex.2} is valid and 
$$
\pr(\tau_x>n)\ge \frac{c(x)}{n}
$$
then
$$
\e[-u(x+S(\tau_x))]=\infty.
$$
Combining this with \eqref{ex.5}, we conclude that 
$$
\lim_{n\to\infty}\e[u(x+S(n));\tau_x>n]=\infty.
$$
\hfill$\diamond$ 
\end{example}
It turns out that if the increments of the walk are independent copies of 
of the vector $\sqrt{2}(W_2,W_1)$ then the asymptotic behaviour of
$\pr(\tau_x>n)$ can be studied without constructing a positive harmonic function.
\begin{proposition}
\label{prop:example2}
For the random walk from Example~\ref{Example2} we have:
\begin{itemize}
 \item[(a)] the sequence $E_n:=\e[u(x+S(n));\tau_x>n]$ is monotone increasing and slowly varying;
 \item[(b)] as $n\to\infty$,
            $$
            \pr(\tau_x>n)\sim\varkappa\frac{E_n}{n};
            $$ 
 \item[(c)] the limit of $E_n$ is finite if and only if $\e[W_1^2\log(1+W_1)]$ is finite.
\end{itemize}
\end{proposition}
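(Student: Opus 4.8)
The plan is to handle the monotonicity in (a) by hand and to reduce everything else to Theorems~\ref{thm:C2} and~\ref{thm:smooth_and_convex}, applied either to the walk itself or to a truncation of it, the dividing line being whether $\mathbf E[W_1^2\log(1+W_1)]$ is finite. Monotonicity is immediate from \eqref{ex.5}: optional stopping together with \eqref{ex.4} gives $E_n=u(x)+\mathbf E[|u(x+S(\tau_x))|;\tau_x\le n]$, and the last term is non-decreasing in $n$. I would then record that $K$ is convex with smooth cross-section, so Assumption~\ref{ass-g} holds; that (M1) and (M2) hold by \eqref{eq:prop.W}; and that, since $W_2$ is bounded and $W_1\ge-1$, the increment $X=\sqrt2(W_2,W_1)$ satisfies $\mathbf E[|X|^2\log(1+|X|)]<\infty$ if and only if $\mathbf E[W_1^2\log(1+W_1)]<\infty$, with $p=2$ so that no further moment is needed. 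Hence, if $\mathbf E[W_1^2\log(1+W_1)]<\infty$, Assumption~\ref{ass-m} holds and Theorems~\ref{thm:C2},~\ref{thm:smooth_and_convex} give $E_n\to V(x)<\infty$ and $\mathbf P(\tau_x>n)\sim\varkappa V(x)/n$; this settles (a), (b) and the ``finite'' half of (c). From now on assume $\mathbf E[W_1^2\log(1+W_1)]=\infty$.

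For the ``infinite'' half of (c) I would sharpen the lower bound of Example~\ref{Example2}. By the Markov property $\mathbf E[|u(x+S(\tau_x))|]=\sum_{m\ge0}\mathbf E[g(x+S(m));\tau_x>m]$ with $g(y):=\mathbf E[-u(y+X);y+X\notin K]\ge\frac14\mathbf E[W_1^2;W_1>\sqrt2(|y_1|+|y_2|)]$, the last bound as in \eqref{ex.5a}. Inserting the a priori estimates $\mathbf P(\tau_x>m)\ge c(x)/m$ and $\mathbf P(\tau_x>m,\,|x+S(m)|\le A\sqrt m)\ge\frac12\mathbf P(\tau_x>m)$ for $A$ large (both obtained from the truncation below) yields $\mathbf E[|u(x+S(\tau_x))|]\ge c'\sum_m m^{-1}\mathbf E[W_1^2;W_1>C\sqrt m]\asymp\mathbf E[W_1^2\log(1+W_1)]=\infty$, so $E_n\to\infty$ by \eqref{ex.5}.

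The remaining assertions, and the two a priori estimates just used, come from truncating the heavy coordinate at a level comparable to $n$. Let $\widehat S^{(L)}$ be the walk with increments $\sqrt2(W_2,W_1\wedge L)$ and take $L=L_n:=n$. Then $\widehat S^{(n)}$ has the same first coordinate as $S$, its second-coordinate increments are bounded by $\sqrt2 n$, the truncation drift $\sqrt2\mathbf E[(W_1-n)^+]=o(1/n)$ and the variance deficit $O(\mathbf E[W_1^2;W_1>n])=o(1)$ are negligible over $n$ steps (after a recentering and rescaling that changes the cone by $o(1)$, $\widehat S^{(n)}$ satisfies Assumptions~\ref{ass-g},~\ref{ass-m}), the Lindeberg condition at scale $\sqrt n$ holds, and $\mathbf E[|\widehat X^{(n)}|^2\log(1+|\widehat X^{(n)}|)]\asymp\int_1^n\mathbf E[W_1^2;W_1>t]\,\frac{dt}{t}\asymp E_n$. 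Crucially, if $S$ and $\widehat S^{(n)}$ differ along a trajectory with $\tau_x>n$ (or $\widehat\tau_x^{(n)}>n$), i.e. some $W_1(k)>n$, then survival forces $x_1+S_1(k)>W_1(k)/\sqrt2>n/\sqrt2$, which requires the simple random walk $S_1$ to reach height of order $n$ by time $n$ --- an event of probability $e^{-cn}$ by a large-deviation bound; hence $|\mathbf P(\tau_x>n)-\mathbf P(\widehat\tau_x^{(n)}>n)|$ and $|E_n-\mathbf E[u(x+\widehat S^{(n)}(n));\widehat\tau_x^{(n)}>n]|$ are exponentially small. Applying Theorems~\ref{thm:C2},~\ref{thm:smooth_and_convex} to $\widehat S^{(n)}$ --- with the dependence of the error terms on the moment $\mathbf E[|\widehat X^{(n)}|^2\log(1+|\widehat X^{(n)}|)]\asymp E_n=o(\log n)$ made explicit --- gives $\mathbf P(\widehat\tau_x^{(n)}>n)\sim\varkappa\widehat V^{(n)}(x)/n$ and $\mathbf E[u(x+\widehat S^{(n)}(n));\widehat\tau_x^{(n)}>n]\sim\widehat V^{(n)}(x)$, whence $\mathbf P(\tau_x>n)\sim\varkappa E_n/n$, which is (b); here $\widehat V^{(m)}(x)$ is bounded below by a positive constant for $m$ large, which also yields $\mathbf P(\tau_x>m)\ge c(x)/m$, and the concentration estimate follows from tightness of the scaling limits of the $\widehat S^{(L)}$ together with a positive-probability small-ball estimate for the increments of $S$ on $[m/2,m]$.

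Finally, for the slow variation of $E_n$: running the estimates above through $E_n=u(x)+\sum_{m<n}\mathbf E[g(x+S(m));\tau_x>m]$ with two-sided control --- using the scaling limit \eqref{eq:s_and_c.3} for $\widehat S^{(m)}$ (transferred to $S$) and the boundary bounds \eqref{eq:harmonic.boundary}--\eqref{eq:harmonic.boundary.lower} to evaluate $\mathbf E[g(x+S(m));\tau_x>m]$ up to constants --- gives $E_n\asymp u(x)+\int_1^{\sqrt n}\mathbf E[W_1^2;W_1>t]\,\frac{dt}{t}$; since the increment of the right-hand side over $[n,2n]$ is at most $(\log2)\,\mathbf E[W_1^2;W_1>\sqrt n]\to0$ while $E_n\to\infty$, $E_n$ is slowly varying. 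The main obstacle is making the truncation rigorous: one needs Theorems~\ref{thm:C2},~\ref{thm:smooth_and_convex} for the $n$-dependent walk $\widehat S^{(n)}$ with error terms controlled in terms of its (slowly growing) logarithmic moment, and one must check that the estimates for $\mathbf E[g(x+S(m));\tau_x>m]$ hold uniformly down to the boundary layer of $K$; the rest is bookkeeping around \eqref{ex.5} and the computation in Example~\ref{Example2}.
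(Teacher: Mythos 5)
Your plan diverges from the paper's proof in a way that is genuinely interesting but leaves a real gap at its center. The paper never truncates. It uses the special structure of this walk --- namely that $u(x+S(\tau_x))\le 0$ implies $u(x+S(n)){\rm 1}\{\tau_x>n\}$ is a \emph{sub}martingale --- to get the a~priori bound $\pr(\tau_x>k)\le C(x)E_k/k$ directly from Lemma~\ref{lem:ub2}, and then establishes slow variation of $E_n$ by decomposing $E_n-E_m=\e[-u(x+S(\tau_x));\tau_x\in(m,n]]$ into a near/far split $\Sigma_1+\Sigma_2$, controlled by Burkholder--Davis--Gundy, the Markov inequality, a local limit bound (Lemma~27 of \cite{DW15}), and the half-plane estimate \eqref{ex.14}. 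Part (b) then follows by the usual three-zone Markov decomposition and the Brownian approximation only on the ``bulk'' zone $K_2$, and the ``infinite'' half of (c) comes from the dyadic recursion $E_{2n}\ge(1+\tfrac{\varkappa}{8}\e[W_1^2;W_1>\sqrt2n^2])E_n$, which only uses (a) and (b) already proved, not a quantitative theorem for an auxiliary walk.

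The gap in your argument is the sentence you yourself flag as ``the main obstacle'': you apply Theorems~\ref{thm:C2} and~\ref{thm:smooth_and_convex} at time $n$ to a walk $\widehat S^{(n)}$ whose law \emph{depends on $n$}, and you need the error in each of those asymptotics to be controlled explicitly in terms of the moment $\e[|\widehat X^{(n)}|^2\log(1+|\widehat X^{(n)}|)]$. Neither theorem, as stated or proved, gives such a rate; they are qualitative statements ``as $n\to\infty$'' for a fixed increment law. Making them quantitative and uniform over a family of laws is not bookkeeping --- it would require re-running the whole supermartingale construction, the Sakhanenko coupling argument of the $p\ge2$ section (where the coupling scale $b_n$ is chosen using an $o$-estimate that breaks down when the truncated logarithmic moment grows), and the estimates in Lemmas~\ref{lem:ghat.1}--\ref{lem:ghat.4} with explicit constants. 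On top of that there is a circularity: the size of the controlled error is meant to be $\asymp E_n$, which is the quantity whose slow variation you are trying to establish; so some bootstrap or induction on scales is needed, and you do not indicate one. Finally, both the lower bound $\pr(\tau_x>m)\ge c(x)/m$ and the concentration estimate that your proof of the ``infinite'' half of (c) relies on are themselves deferred to the truncation machinery, so they inherit the same gap. The large-deviation coupling of $S$ and $\widehat S^{(n)}$ and the reduction to the $\log$-moment are correct and pleasant, but the proof is not complete without the uniform/quantitative version of the main theorems, which is precisely the ingredient the paper's argument is designed to avoid.
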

\section{Preliminary estimates}    
In this section we will present some preliminary bounds. 

\subsection{Estimates for the harmonic and Green functions}
Here, we collect some information that will be used in the sequel. 

Recall that  $d(x) = {\rm dist}(x,\partial K)$.  
and note that following  simple  bound
\[
    d(x)\le |x|,\quad x\in K. 
\]
We will use the standard multi-index notation for partial derivatives, 
that is for $\alpha=(\alpha_1,\ldots,\alpha_d)$ we put 
$|\alpha| = \alpha_1+\cdots+\alpha_d$ and 
\[
\frac{\partial^\alpha f(x)} {\partial x_\alpha}   
 = \frac{\partial^{|\alpha|} f(x)} {\partial x_{\alpha_1}\ldots \partial x_{\alpha_d}}.   
\]

We shall always assume that $u(x)=0$ for all $x\notin K$.

 We will make use of the following   result, see~\cite[Lemma 2.1]{DW19}. 
 \begin{lemma}
    \label{lem:harnack}
    There exists a constant $C=C(d)$ such that 
    for $x\in K$ and  $\alpha$ with $|\alpha|\le 3$, 
    \begin{align}
    \left|\frac{\partial^\alpha u(x)} {\partial x^\alpha} \right|  
    \le C\frac{u(x)}{d(x)^{|\alpha|}}
    \label{eq:bound.u}
    \end{align}
\end{lemma}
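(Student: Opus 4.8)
The plan is to derive the bound purely from interior properties of $u$, exploiting that a ball of radius $d(x)$ around any $x\in K$ stays inside $K$, where $u$ is harmonic and of one sign. On such a ball the classical Cauchy-type (interior) derivative estimates for harmonic functions, combined with Harnack's inequality, give exactly the asserted control.

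\emph{Step 1: $B(x,d(x))\subset K$.} For $x\in K$, if some $z$ satisfied $|z-x|<d(x)$ but $z\notin K$, then, $K$ being open and containing $x$, the segment $[x,z]$ would meet $\partial K$ at a point $w$ with $|x-w|\le|x-z|<d(x)$, contradicting $d(x)=\mathrm{dist}(x,\partial K)$. Hence $u$ is harmonic and strictly positive on $B(x,d(x))$, and since $d(x)\le|x|$ this ball avoids the origin, so $u$ is even real-analytic there and all the derivatives in the statement make sense.

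\emph{Step 2: interior estimates plus Harnack.} I would invoke the standard interior derivative estimate for harmonic functions (obtained by iterating the mean-value property / differentiating the Poisson representation; see e.g.\ \cite{GT2001}): if $h$ is harmonic on $B(y,\rho)$, then $|\partial^\beta h(y)|\le C(d,|\beta|)\,\rho^{-|\beta|}\sup_{\overline{B(y,\rho)}}|h|$. Applying this with $y=x$, $\rho=d(x)/2$, $h=u$ and using $u>0$ there gives, for $|\alpha|\le 3$,
\[
\left|\frac{\partial^\alpha u(x)}{\partial x^\alpha}\right|
\le \frac{C(d)}{d(x)^{|\alpha|}}\,\sup_{\overline{B(x,d(x)/2)}}u.
\]
On the other hand, $u$ is a positive harmonic function on the larger ball $B(x,d(x))$, so Harnack's inequality on the concentric half-radius ball yields $\sup_{\overline{B(x,d(x)/2)}}u\le C(d)\inf_{\overline{B(x,d(x)/2)}}u\le C(d)\,u(x)$, since $x$ itself lies in that ball. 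Combining the two estimates gives the claim, with a constant depending only on $d$; the dependence on $|\alpha|$ disappears once we maximise over the finitely many $\alpha$ with $|\alpha|\le 3$.

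\emph{Main obstacle.} There is essentially no hard step here. The only point requiring care is Step~1: the inclusion $B(x,d(x))\subset K$ is what turns this into a purely interior problem and removes any need for information about the regularity of $\partial K$ (in contrast with the boundary estimates \eqref{eq:harmonic.boundary}--\eqref{eq:harmonic.boundary.lower} of Assumption~\ref{ass-g}). The restriction $|\alpha|\le 3$ is not essential and is imposed only because third order is all that is needed in the sequel; the same argument works, with $C=C(d,k)$, for derivatives up to any fixed order $k$.
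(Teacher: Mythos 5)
Your proof is correct and uses exactly the standard mechanism one expects here (and which the label \texttt{lem:harnack} already signals): inclusion $B(x,d(x))\subset K$, interior Cauchy-type derivative estimates for harmonic functions, and Harnack's inequality on the half-radius ball to replace the supremum of $u$ by $u(x)$. The paper simply cites \cite[Lemma~2.1]{DW19} for this lemma, where the argument is of the same nature, so your reconstruction is in line with the intended proof.
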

We will also make use of the following result proved in~\cite[Lemma~2.3]{DW19}. 
\begin{lemma}\label{lem:u-diff}
  Assume that equation~\eqref{eq:harmonic.boundary} holds. 
  Let $x\in K$. Then, 
  \begin{equation}
    \label{diff-bound}
    |u(x+y)-u(x)|\le  C|y|\left(|x|^{p-1}+|y|^{p-1}\right)
  \end{equation}
and, for $|y|\le |x|/2$,
\begin{equation}
    \label{diff-bound1}
    |u(x+y)-u(x)|\le  C|y||x|^{p-1}.
  \end{equation}
For $p<1$ and $x\in K$, 
\begin{equation}
  \label{diff-bound2}
  |u(x+y)-u(x)|\le  C|y|^{p}. 
\end{equation}
 \end{lemma}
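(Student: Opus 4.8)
The plan is to reduce the three inequalities to a single core estimate — namely~\eqref{diff-bound1} — together with two elementary regimes, writing $C$ for a constant that may change from line to line. First I dispose of the \emph{far regime} $|y|\ge|x|/2$. Here $|x+y|\le|x|+|y|\le3|y|$ and $|x|\le2|y|$, so the homogeneity bound~\eqref{eq:u.bound.main} gives $u(x)\le C|x|^p\le C|y|^p$ and $u(x+y)\le C|x+y|^p\le C|y|^p$, hence
\[
|u(x+y)-u(x)|\le C|y|^p=C|y|\,|y|^{p-1}\le C|y|\bigl(|x|^{p-1}+|y|^{p-1}\bigr).
\]
This already proves~\eqref{diff-bound} in this regime, and, when $p<1$, also~\eqref{diff-bound2} there.

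Next comes the \emph{near regime} $|y|<|x|/2$, in which $|x|/2\le|x+y|\le3|x|/2$, so that $|x+y|^{p-1}\le C|x|^{p-1}$ whatever the sign of $p-1$. I split according to the distance of $x$ to $\partial K$. If $d(x)>2|y|$, then $d(x+ty)\ge d(x)-|y|\ge d(x)/2>0$ for every $t\in[0,1]$, so the segment $\{x+ty:0\le t\le1\}$ never meets $\partial K$ and therefore lies in $K$; combining the gradient bound of Lemma~\ref{lem:harnack} (case $|\alpha|=1$) with the hypothesis~\eqref{eq:harmonic.boundary} gives, for all $t\in[0,1]$,
\[
|\nabla u(x+ty)|\le C\frac{u(x+ty)}{d(x+ty)}\le C\frac{|x+ty|^{p-1}d(x+ty)}{d(x+ty)}\le C|x|^{p-1},
\]
so that integrating $u(x+y)-u(x)=\int_0^1\nabla u(x+ty)\cdot y\,dt$ yields $|u(x+y)-u(x)|\le C|x|^{p-1}|y|$. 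If instead $d(x)\le2|y|$, the segment may leave $K$, so I bound the two terms separately: by~\eqref{eq:harmonic.boundary}, $u(x)\le C|x|^{p-1}d(x)\le C|x|^{p-1}|y|$; and for $u(x+y)$, if $x+y\notin K$ then $u(x+y)=0$, while if $x+y\in K$ then $d(x+y)\le d(x)+|y|\le3|y|$, so~\eqref{eq:harmonic.boundary} together with $|x+y|^{p-1}\le C|x|^{p-1}$ gives $u(x+y)\le C|x|^{p-1}|y|$. In either case $|u(x+y)-u(x)|\le u(x)+u(x+y)\le C|x|^{p-1}|y|$, which is~\eqref{diff-bound1}.

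It remains to assemble the statements. Inequality~\eqref{diff-bound} follows from~\eqref{diff-bound1} in the near regime and from the far-regime bound above. For $p<1$, \eqref{diff-bound2} follows because in the near regime $|x|>2|y|$ and $t\mapsto t^{p-1}$ is decreasing, so $|x|^{p-1}\le(2|y|)^{p-1}=2^{p-1}|y|^{p-1}$ and hence $|u(x+y)-u(x)|\le C|x|^{p-1}|y|\le C|y|^p$, while in the far regime it was already obtained. The only point that is not routine is the treatment of $x$ close to $\partial K$: one cannot integrate $\nabla u$ along the segment $[x,x+y]$ because that segment may exit the (possibly non-convex) cone. The resolution is to switch, once $d(x)\lesssim|y|$, from the gradient/Harnack estimate to the boundary estimate~\eqref{eq:harmonic.boundary}, controlling $u(x)$ and $u(x+y)$ individually; the elementary inequality $d(x+y)\le d(x)+|y|$ is what makes the bound on $u(x+y)$ work.
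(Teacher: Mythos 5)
Your proof is correct, and it takes the natural route the paper implicitly points to (by citing \cite[Lemma~2.3]{DW19} and noting that~\eqref{eq:harmonic.boundary} is what the argument really needs): split into a far regime $|y|\gtrsim|x|$ handled by the global bound $u\le C|\cdot|^p$, and a near regime handled by the gradient estimate of Lemma~\ref{lem:harnack} combined with~\eqref{eq:harmonic.boundary} when $d(x)\gg|y|$, and by bounding $u(x)$ and $u(x+y)$ separately via~\eqref{eq:harmonic.boundary} and the $1$-Lipschitz property of $d$ when $d(x)\lesssim|y|$. The case split near $\partial K$ correctly avoids integrating $\nabla u$ along a segment that might leave the cone, which is precisely the point this lemma needs to get right for non-convex $K$. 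The only cosmetic remark is that you take the near regime to be $|y|<|x|/2$ while~\eqref{diff-bound1} is stated for $|y|\le|x|/2$; the borderline case $|y|=|x|/2$ is covered either by extending your near-regime inequalities (they are not strict) or by noting that the far-regime bound $C|y|^p$ equals $C|y|\,(|x|/2)^{p-1}\le C'|y||x|^{p-1}$ there.
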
  
 This lemma has been formulated under slightly different assumptions. 
 However, it can be seen from the proof that assumption~\eqref{eq:harmonic.boundary} is sufficient.

Let $G(x,y)$ be the Green function of Brownian motion 
killed on leaving  the cone $K$.  
For all $y\in K$ this function is harmonic in $x\neq y$ and vanishes at the  boundary of the cone. 

\begin{lemma}\label{lem:green.bound0}
    Let the assumption~\ref{ass-g} holds. 
    Then, for any $A>0$ there exists   a constant $C_A$ such that 
\begin{equation}
 \label{eq:green.function.bound0}
 G(x,y)  \le C_A \widehat G(x,y), 
 \end{equation} 
where 
\begin{equation}\label{eq:ghat}
    \widehat G(x,y)  := 
\begin{cases}
    \frac{u(x)u(y)}{|y|^{d-2+2p}},& |x|\le |y|, |x-y|\ge A|y|\\
     \frac{u(x)u(y)}{|x|^{d-2+2p}},& |y|\le |x|, |x-y|\ge A|y|\\ 
     \frac{u(x)u(y)}{|x|^{p-1}|y|^{p-1}} \frac{1}{|x-y|^d}
                                   &d(y)/2\le |x-y| \le A|y|\\
                                   \frac{1}{|x-y|^{d-2}}I(d>2)
                                   +\ln\left(\frac{d(y)}{|x-y|}\right)I(d=2)
                                   & |x-y|<d(y)/2.
 \end{cases}
 \end{equation}
 \begin{proof}
     When $\Sigma $ is of class $C^2$ the required bound was proved in~\cite[Lemma~1 and Lemma~4]{Azarin66}. 
    The result was derived from~\cite{KM39}(see also~\cite[Theorem 2.3]{Widman1967}) 
    by transferring an estimate 
    \[
      G(x,y)\le \frac{d(x)d(y)}{|x-y|^d}    
    \]
    for the Green function 
    proved for bounded Lyapunov domains to an estimate for cones. 
    It is clear that  the same proof will work provided condition~\eqref{eq:harmonic.boundary.lower} holds.  

    Alternatively, for $d\ge 3$ 
    the  bound follows from the following result proved in~\cite[Remark 3.1]{H06}, 
     \[
G(x,y)  \le C\frac{u(x)u(y)}{\max(|x|,|y|)^{2p-2}|x-y|^d}.
     \]     
     Indeed when $\Sigma $ is Lipschitz 
     the cone $K$ is uniform.  
     Then the result follows from the assumptions~\eqref{eq:harmonic.boundary} and~\eqref{eq:harmonic.boundary.lower}. 
     The case $d=2$ follows from~\cite[Lemma~1 and Lemma~4]{Azarin66}. 
     For $d=2$ the last line in the bound is an estimate of the Green function for cone $K$ 
     can be derived  via the Green function for the ball $B(y,\delta(y)/2)$.  
 \end{proof}    

 \end{lemma}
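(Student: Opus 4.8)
The plan is to build the four regimes in \eqref{eq:ghat} out of three classical ingredients: the explicit Green function of a Euclidean ball, the Krasovskii--Miranda--Widman estimate $G_D(x,y)\le C\,d(x)d(y)/|x-y|^d$ for bounded Lyapunov (or Lyapunov--Dini) domains $D$, and the boundary Harnack principle combined with the scale invariance of $K$. Throughout, Assumption~\ref{ass-g} plays a double role: a Lipschitz star-like cone is a uniform domain, and the two-sided bound \eqref{eq:harmonic.boundary}--\eqref{eq:harmonic.boundary.lower} gives $d(z)\asymp u(z)/|z|^{p-1}$, which is precisely the identity needed to pass between the ``$d(\cdot)$-form'' and the ``$u(\cdot)$-form'' of the estimates. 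One may also shortcut part of the argument for $d\ge3$ by quoting \cite[Remark~3.1]{H06}, which for uniform domains gives $G(x,y)\le C\,u(x)u(y)/(\max(|x|,|y|)^{2p-2}|x-y|^d)$; together with \eqref{eq:harmonic.boundary}--\eqref{eq:harmonic.boundary.lower} this already yields the second and third lines of \eqref{eq:ghat}.

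For the near-diagonal regime $|x-y|<d(y)/2$ I would just note that $B(y,d(y)/2)\subset K$, so $G(x,y)\le G_{B(y,d(y)/2)}(x,y)$, and the right-hand side is explicit: it equals $c_d\bigl(|x-y|^{2-d}-(d(y)/2)^{2-d}\bigr)\le c_d|x-y|^{2-d}$ for $d\ge3$ and $\tfrac1{2\pi}\ln\bigl(d(y)/(2|x-y|)\bigr)$ for $d=2$, which is the fourth line. For the same-scale off-diagonal regime $d(y)/2\le|x-y|\le A|y|$ the idea is to rescale by $r=|y|$: by homogeneity $\tfrac1r\bigl(K\cap\{r/8<|z|<8r\}\bigr)$ is, uniformly in $r$, a fixed bounded Lipschitz (respectively Lyapunov--Dini) domain of unit size containing the rescaled $x$ and $y$; applying the bounded-domain estimate of \cite[Theorem~2.3]{Widman1967}, \cite{KM39} there and scaling back gives $G(x,y)\le C\,d(x)d(y)/|x-y|^d$, and then \eqref{eq:harmonic.boundary} converts this to the third line. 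For $d=2$ the corresponding statement is taken from \cite[Lemma~1 and Lemma~4]{Azarin66}.

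For the ``different scales'' regime, say $|x|\le|y|$ and $|x-y|\ge A|y|$, I would use the boundary Harnack principle twice. As a function of $x$, $G(\cdot,y)$ is positive and harmonic in $K\cap B(x,A|y|/2)$ and vanishes on $\partial K$ there, so the BHP in the Lipschitz cone --- with constants uniform in scale by homogeneity --- gives $G(x,y)\le C\,u(x)\,G(x',y)/u(x')$ for a corkscrew point $x'$ with $|x'|\asymp|y|$ and $d(x')\asymp|y|$; running the same argument in the variable $y$ reduces the problem to $G$ evaluated at two interior points on the scale $|y|$, where $G$ is at most $C|y|^{2-d}$ and $u$ is comparable to $|y|^p$, so $G(x,y)\le C\,u(x)u(y)/|y|^{d-2+2p}$, i.e.\ the first line. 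The case $|y|\le|x|$ is symmetric and gives the second line. Morally this is what \cite[Lemma~1 and Lemma~4]{Azarin66} carry out for $C^2$ cones; the only new point is that $C^2$ can be relaxed.

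The main obstacle is bookkeeping of uniformity: one has to check that the constants in the bounded-domain Green estimate, in the boundary Harnack principle and in the Harnack chains are all independent of the dyadic scale --- which is exactly where scale invariance of $K$ and the homogeneity \eqref{u.from.m} of $u$ enter --- and that $C^{1,\alpha}$ or Lyapunov--Dini regularity of $\Sigma$ (rather than $C^2$) suffices for the cited bounded-domain bounds. It does, because the estimates in \cite{Widman1967} are local and on a fixed reference annulus $\partial K$ is $C^{1,\alpha}$ (indeed $u$ is continuously differentiable up to that piece of the boundary), and the two-sided estimate \eqref{eq:harmonic.boundary}--\eqref{eq:harmonic.boundary.lower} is precisely what makes the conversion between the $d(\cdot)$- and $u(\cdot)$-forms valid in every case, as well as what underlies the boundary Harnack principle in the required scale-invariant form.
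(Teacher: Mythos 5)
Your proposal is correct and follows essentially the same route as the paper's own proof: both rest on the Keldysh--Lavrent'ev/Widman bounded-domain Green estimate transferred to the cone by scaling (Azarin), Hirata's uniform-domain bound for $d\ge3$, and the ball Green function near the diagonal, with the two-sided comparison $d(z)\asymp u(z)/|z|^{p-1}$ from \eqref{eq:harmonic.boundary}--\eqref{eq:harmonic.boundary.lower} converting the $d(\cdot)$-form into the $u(\cdot)$-form. You merely spell out more explicitly the boundary-Harnack and rescaling mechanisms that the paper leaves implicit in its citations.
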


\subsection{Estimates for the error term}

Let $u(x)=0$ when $x\notin K$ and let 
    \begin{equation}\label{eq:defn.f}
        f(x) = \e[u(x+X)] -u(x).
    \end{equation}
The following lemma will be convenient in the construction to follow. 
\begin{lemma}\label{lem:construction.gamma}
Assume that $\e[|X|^2]<\infty$  and, in addition, 
\[
        \begin{cases}
            \e[|X|^p]<\infty,& p>2;\\ 
            \e[\ln(1+|X|)|X|^2]<\infty,& p=2.
        \end{cases}    
    \]
        Then, there exists a slowly varying, monotone decreasing differentiable function $\gamma(t)$ such that  
        \begin{align}\label{eq:gamma.integral.finite}
            \e[|X|^{p};|X|>t]&=o(\gamma(t)t^{p-2})\\ 
            \label{eq:gamma.integral.finite.3}
            \int_1^\infty x^{-1}\gamma(x) dx&<\infty.
        \end{align}
    \end{lemma}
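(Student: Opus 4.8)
The plan is to construct $\gamma$ directly from the tail function of $X$. Set $H(t) := \e[|X|^p; |X| > t]$ when $p > 2$, and $H(t) := \e[|X|^2 \ln(1+|X|); |X| > t]$ when $p = 2$. Under the stated moment hypotheses we have $H(t) \to 0$ as $t \to \infty$ and $H$ is nonincreasing. The key observation driving the construction is: in the case $p = 2$ the quantity we must dominate is $\e[|X|^2; |X| > t]$, and since $\int_1^\infty t^{-1}\e[|X|^2; |X| > t]\,dt \asymp \e[|X|^2\ln(1+|X|)] < \infty$ by Fubini, the function $t \mapsto \e[|X|^2; |X| > t]$ is itself $o(1)$ and integrable against $dt/t$ — but it need not be monotone in a strong enough sense nor slowly varying. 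So the strategy is to build a slowly varying, monotone, differentiable majorant of it (times $t^{p-2}$), exploiting that a function which is $o(1)$ and $dt/t$-integrable can always be so majorized.

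First I would reduce everything to a single real function. Define
\[
g(t) := \frac{\e[|X|^p; |X|>t]}{t^{p-2}}, \qquad t \ge 1.
\]
In the case $p > 2$ this is $\e[|X|^p;|X|>t]\cdot t^{2-p}$, a product of something tending to $0$ with something tending to $0$, hence $g(t)\to 0$; moreover $\int_1^\infty t^{-1} g(t)\,dt = \int_1^\infty t^{1-p}\e[|X|^p;|X|>t]\,dt$, and interchanging the order of integration gives, up to a constant, $\e[|X|^p \cdot |X|^{2-p}] = \e[|X|^2] < \infty$. In the case $p=2$ we have $g(t) = \e[|X|^2;|X|>t]$ and, as noted, $\int_1^\infty t^{-1}g(t)\,dt \asymp \e[|X|^2\ln(1+|X|)] < \infty$ and $g(t)\to 0$. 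So in both cases $g$ is nonincreasing, $g(t)\to 0$, and $\int_1^\infty t^{-1} g(t)\,dt < \infty$; and the target estimates become $g(t) = o(\gamma(t))$ together with $\int_1^\infty t^{-1}\gamma(t)\,dt < \infty$.

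Next I would produce $\gamma$ from $g$ by a standard "slowing down" device. Choose a sequence $t_k \uparrow \infty$ with $\int_{t_k}^\infty s^{-1} g(s)\,ds \le 2^{-k}$, and on the block $[t_k, t_{k+1}]$ replace $g$ by $(k+1)\sup_{s \ge t_k} g(s) = (k+1) g(t_k)$ (using monotonicity of $g$); this multiplies the contribution of block $k$ to $\int t^{-1}(\cdot)\,dt$ by at most $k+1$, and one checks $\sum_k (k+1)\int_{t_k}^{t_{k+1}} s^{-1}g(s)\,ds \le \sum_k (k+1) 2^{-k} < \infty$, so integrability against $dt/t$ is preserved. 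Since $k \to \infty$ along the blocks, the ratio $g(t)/[(k+1)g(t_k)]\le 1/(k+1)\to 0$, giving $g = o(\text{of the new function})$. Finally, to obtain monotonicity and slow variation I would pass to $\bar\gamma(t) := \int_t^\infty s^{-1}(\text{new function})(s)\,ds$, which is finite, nonincreasing, tends to $0$, is differentiable with $\bar\gamma'(t) = -t^{-1}(\text{new function})(t)$, and is slowly varying because $t\bar\gamma'(t)/\bar\gamma(t)\to 0$ (Karamata: an $L^1(dt/t)$ tail integral is slowly varying); one also has the new function $\le C\bar\gamma$ near infinity after another block-comparison, hence $g = o(\bar\gamma)$, and $\int_1^\infty t^{-1}\bar\gamma(t)\,dt \asymp \int_1^\infty t^{-1}(\text{new function})(t)\,dt < \infty$ by Fubini. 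Then set $\gamma := \bar\gamma$, extended to be constant on $[0,1]$ and smoothed if one insists on differentiability across $t=1$.

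The main obstacle is the simultaneous requirement of slow variation, monotonicity, differentiability, and $dt/t$-integrability while still dominating $g$: any one of these is easy, but a crude majorant (e.g. $\sup_{s\ge t} g(s) = g(t)$ itself) fails slow variation, while a slowly varying majorant obtained by brute iteration of logarithms may lose integrability. The block construction followed by the tail-integral smoothing is exactly the mechanism that threads all four conditions at once; verifying that the inflation factors $(k+1)$ keep the series $\sum (k+1)2^{-k}$ convergent, and that $t\bar\gamma'/\bar\gamma \to 0$, are the two computations to carry out carefully. Everything else is routine Fubini and Karamata bookkeeping.
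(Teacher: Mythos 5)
Your overall strategy — pass to the single scalar function $g(t)=\e[|X|^p;|X|>t]/t^{p-2}$, verify it is small and $dt/t$-integrable, then inflate it into a slowly varying, differentiable, monotone majorant — is the right idea, and it is in spirit the same route as the paper. The paper, however, does not re-derive the majorant construction: after establishing, by Fubini, that $t\mapsto\e[|X|^p;|X|>t]/t^{p-1}$ is monotone and integrable on $(1,\infty)$, it simply invokes the cited result \cite{D06} (Denisov, ``On the existence of a regularly varying majorant of an integrable monotone function''), which delivers a slowly varying $\ell$ with $\ell(t)/t$ integrable and $\e[|X|^p;|X|>t]\le \ell(t)t^{p-2}$, and then replaces $\ell$ by a larger $\gamma$ with $\ell=o(\gamma)$. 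The small-$p$ case is handled by noting that $t^{1-p}\e[|X|;|X|>t]$ is monotone for $p\ge1$, and by a Markov-inequality reduction to $p=1$ when $p<1$. Your attempt to reprove \cite{D06} from scratch contains genuine gaps, detailed below.

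First, the choice of $g(t)=\e[|X|^p;|X|>t]/t^{p-2}$ is only monotone when $p\ge2$: for $p<2$ you are dividing a decreasing function by a decreasing function and there is no reason for the quotient to be monotone, yet the block construction explicitly relies on $\sup_{s\ge t_k}g(s)=g(t_k)$. This is exactly why the paper works with the denominator $t^{p-1}$ (monotone for all $p\ge1$) and performs a separate Markov reduction for $p<1$. As written, your argument silently excludes the range $p<2$, which the lemma does cover and which is used later in the paper (e.g.\ with $p=1$ in the estimate \eqref{eq:third.moment}).

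Second, the integrability-preservation claim in the block inflation is wrong in direction. You replace $g$ by $(k+1)g(t_k)$ on $[t_k,t_{k+1}]$ and assert this multiplies the block's contribution to $\int t^{-1}(\cdot)\,dt$ ``by at most $k+1$'', reducing matters to $\sum(k+1)2^{-k}<\infty$. But the block contribution of the inflated function is $(k+1)g(t_k)\log(t_{k+1}/t_k)$, whereas monotonicity of $g$ gives
\[
\int_{t_k}^{t_{k+1}} s^{-1}g(s)\,ds \;\le\; g(t_k)\log\!\left(\frac{t_{k+1}}{t_k}\right),
\]
i.e.\ the inequality goes the wrong way: $g(t_k)\log(t_{k+1}/t_k)$ can be arbitrarily larger than $\int_{t_k}^{t_{k+1}}s^{-1}g\,ds$ (for instance if $g$ drops sharply just after $t_k$). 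The constraint $\int_{t_k}^\infty s^{-1}g\le 2^{-k}$ therefore does not control $(k+1)g(t_k)\log(t_{k+1}/t_k)$, and the claimed series bound does not follow. A correct construction must choose the blocks so as to control $g(t_k)\log(t_{k+1}/t_k)$ directly, which is precisely the content of \cite{D06}.

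Third, the slow-variation check for $\bar\gamma(t)=\int_t^\infty s^{-1}h(s)\,ds$ is circular: you argue $t\bar\gamma'(t)/\bar\gamma(t)=-h(t)/\bar\gamma(t)\to0$, but this requires knowing $h(t)=o(\bar\gamma(t))$, which is essentially the domination you set out to produce, not an already-available fact (and it fails in general for $dt/t$-integrable $h$ unless $h$ has some regularity). The Karamata-type fact you allude to holds for regularly varying $h$, which is again what is being constructed. So this step needs a genuine argument, not an appeal to Karamata. Each of these gaps is fixable with more care, but as written the proposal does not constitute a proof; citing \cite{D06}, as the paper does, is the clean way to close it.

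Minor: your Fubini computation returns ``$\e[|X|^2]$'' as the bound for $\int_1^\infty t^{1-p}\e[|X|^p;|X|>t]\,dt$ in the case $p>2$; the correct bound there is $\e[|X|^p]/(p-2)$, since $\int_1^{|X|}t^{1-p}\,dt\le 1/(p-2)$. The finiteness conclusion is right, but the displayed identity is not.
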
    
\begin{proof}
We consider first the case $p\ge1$. It is clear that 
$$
t\mapsto\frac{\e[|X|^p;|X|>t]}{t^{p-1}}
\quad\text{is monotone decreasing.}
$$
Furthermore, by the Fubini theorem,
\begin{align*}
\int_1^\infty \frac{\e[|X|^p;|X|>t]}{t^{p-1}}dt
&=\e\left[|X|^p\int_1^{|X|}t^{1-p}dt; |X|>1\right]\\
&\le\left\{
\begin{array}{ll}
\frac{\e[|X|^2]}{(2-p)}, &p<2,\\
\e[|X|^2\log(1+|X|)], &p=2,\\
\frac{\e[|X|^p]}{(p-2)}, &p>2.\\
\end{array}
\right.
\end{align*}
Thus we may apply the result of \cite{D06}: there exists a slowly varying function $\ell(t)$ such that $\ell(t)/t$ is integrable and 
$$
\frac{\e[|X|^p;|X|>t]}{t^{p-1}}\le\frac{\ell(t)}{t}.
$$
Equivalently,
$$
\e[|X|^p;|X|>t]\le \ell(t)t^{p-2}.
$$
It is possible to choose a decreasing, slowly varying function $\gamma(t)$ such that $\gamma(t)/t$ is integrable and $\ell(t)=o(\gamma(t))$. This completes the proof in the case $p\ge1$.

When  $p<1$ then, using the Markov inequality, we get 
\begin{align*}
\frac{\e[|X|^p;|X|>t]}{t^{p-1}}
&=t^{1-p}\e\left[\frac{|X|}{|X|^{1-p}};|X|>t\right]
\le \e[|X|;|X|>t] 
\end{align*}
Now it remains to apply the already proven estimate for $p=1$. 

Finally note that it is not difficult to achieve differentiability of $\gamma$ by 
 adjusting it. 
\end{proof}

    \begin{lemma}\label{lem:bound.f}
        Assume that equation~\eqref{eq:harmonic.boundary} holds   
        and let $f$ be defined by~\eqref{eq:defn.f}.
      Then, 
      \begin{equation*} 
          |f(x)|=o(\beta(x)),\quad d(x)\to\infty,   
        \end{equation*}
    where 
    \begin{equation}\label{defn.beta}
        \beta(x):= |x|^{p-1}\frac{\gamma(d(x))}{d(x)}.
    \end{equation}
    \end{lemma}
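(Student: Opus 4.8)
The plan is to split the increment $X$ into a "small" part and a "large" part relative to the scale $d(x)$, and estimate the contribution of each to $f(x)=\e[u(x+X)]-u(x)$. Write $f(x)=\e[u(x+X)-u(x);|X|\le d(x)/2]+\e[u(x+X);|X|>d(x)/2]-\e[u(x);|X|>d(x)/2]$. The third term is $u(x)\pr(|X|>d(x)/2)$, which by Chebyshev and $\e[|X|^2]<\infty$ is $o(|x|^p/d(x)^2)$; since $u(x)\le C|x|^{p-1}d(x)$ by \eqref{eq:harmonic.boundary}, this is $o(|x|^{p-1}\gamma(d(x))/d(x))=o(\beta(x))$ provided $1/d(x)=O(\gamma(d(x)))$, which holds because $\gamma$ is slowly varying and bounded below away from zero is not guaranteed — so more carefully one uses \eqref{eq:gamma.integral.finite} directly: $\pr(|X|>t)\le t^{-p}\e[|X|^p;|X|>t]=o(\gamma(t)t^{-2})$ when $p\ge 2$, and for $p<2$ one bounds $\pr(|X|>t)\le t^{-2}\e[|X|^2;|X|>t]$ and compares with $\gamma$ using the defining property again after noting $\e[|X|^2;|X|>t]\le \e[|X|^p;|X|>t]\,t^{2-p}$ fails for $p<2$; instead for $p<2$ use $\e[|X|^p;|X|>t]\ge t^{p-2}\e[|X|^2;|X|>t]$, so $\e[|X|^2;|X|>t]\le t^{2-p}\e[|X|^p;|X|>t]=o(t^{2-p}\gamma(t)t^{p-2})=o(\gamma(t))$, giving $u(x)\pr(|X|>d(x)/2)=o(|x|^{p-1}d(x)\cdot \gamma(d(x))/d(x)^2)=o(\beta(x))$ as desired.

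For the small-increment term, on $\{|X|\le d(x)/2\}$ one has $|X|\le d(x)/2\le |x|/2$, so Taylor's theorem applied to $u$ on the ball $B(x,|X|)\subset K$ (where $u$ is harmonic and smooth) gives
\[
u(x+X)-u(x)=\nabla u(x)\cdot X+\tfrac12 X^\top D^2u(\xi)X
\]
for some $\xi$ on the segment $[x,x+X]$, with $d(\xi)\ge d(x)/2$. Taking expectations and using $\e[X]=0$, $\e[X_iX_j]=\delta_{ij}$ — and $\Delta u=0$, so the "diagonal" second-order term would vanish if $\xi$ were $x$; the point is to control the remainder. By Lemma~\ref{lem:harnack}, $|D^2 u(\xi)|\le C u(\xi)/d(\xi)^2\le C'|x|^{p-1}/d(x)$ using \eqref{eq:harmonic.boundary} and $d(\xi)\ge d(x)/2$, $|\xi|\le 2|x|$. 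One then writes $\e[u(x+X)-u(x);|X|\le d(x)/2]=\e[\tfrac12 X^\top(D^2u(\xi)-D^2u(x))X;|X|\le d(x)/2]-\tfrac12\e[X^\top D^2u(x)X;|X|>d(x)/2]$ (the full second moment of the Hessian term being $\tfrac12\operatorname{tr}D^2u(x)=\tfrac12\Delta u(x)=0$, and the first-order term over $\{|X|>d(x)/2\}$ handled as above). For the difference of Hessians one uses the third-derivative bound from Lemma~\ref{lem:harnack}: $|D^2u(\xi)-D^2u(x)|\le C|X|\sup_{[x,\xi]}|D^3 u|\le C|X|\,|x|^{p-1}/d(x)^2$. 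Hence this contribution is bounded by $C|x|^{p-1}d(x)^{-2}\,\e[|X|^3;|X|\le d(x)/2]$.

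The main obstacle is thus to show $d(x)^{-2}\e[|X|^3;|X|\le d(x)/2]=o(|x|^{p-1}\gamma(d(x))/d(x))\cdot|x|^{1-p}\cdot|x|^{p-1}$, i.e. that $d(x)^{-1}\e[|X|^3;|X|\le d(x)]=o(\gamma(d(x)))$, together with the analogous estimate $\e[|X|;|X|>d(x)/2]\,|x|^{p-1}=o(\beta(x))$ for the first-order truncated term. Both reduce, after Fubini, to comparing truncated moments of $X$ with the slowly varying function $\gamma$, and this is exactly what Lemma~\ref{lem:construction.gamma} is designed to deliver: $\e[|X|^p;|X|>t]=o(\gamma(t)t^{p-2})$. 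Concretely, $\e[|X|;|X|>t]=\pr(|X|>t)t+\int_t^\infty\pr(|X|>s)ds$, and one bounds $\pr(|X|>s)\le s^{-p}\e[|X|^p;|X|>s]=o(s^{-2}\gamma(s))$ (for $p\ge 2$; the $p<2$ and $p<1$ cases are handled by the moment comparisons above, exactly paralleling the proof of Lemma~\ref{lem:construction.gamma}), so $\e[|X|;|X|>t]=o(t^{-1}\gamma(t)+\int_t^\infty s^{-2}\gamma(s)ds)=o(t^{-1}\gamma(t))$ by monotonicity of $\gamma$; similarly $\e[|X|^3;|X|\le t]\le \int_0^t 3s^2\pr(|X|>s)ds+t^3\pr(|X|>t)$, and splitting at a fixed large constant and using $s^2\pr(|X|>s)=o(\gamma(s))$ gives $\e[|X|^3;|X|\le t]=o(t\gamma(t))$, again by slow variation and monotonicity of $\gamma$. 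Collecting the three pieces yields $|f(x)|=o(|x|^{p-1}\gamma(d(x))/d(x))=o(\beta(x))$ as $d(x)\to\infty$. A minor care point throughout is that when $p<1$ one should instead invoke \eqref{diff-bound2} for the large-increment part to avoid spurious $|x|^{p-1}$ factors, but since $u(x)\le C|x|^{p-1}d(x)$ and $d(x)\le|x|$ the stated bound $\beta(x)=|x|^{p-1}\gamma(d(x))/d(x)$ is still what comes out.
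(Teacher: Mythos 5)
Your overall decomposition---split $X$ at scale $d(x)/2$, expand $u$ to second order with a Lipschitz bound on the Hessian, cancel the Laplacian using $\Delta u=0$ together with ${\rm cov}(X)=I$, push the leftover onto truncated moments of $X$, and invoke the slowly varying $\gamma$ from Lemma~\ref{lem:construction.gamma}---is the same one the paper uses. The only cosmetic difference is that the paper phrases the Taylor remainder via a H\"older seminorm of $D^2u$ on a ball, whereas you use the Lagrange form with an intermediate point $\xi$; these are interchangeable once the third-derivative bound from Lemma~\ref{lem:harnack} is in hand.

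Two specific steps in your write-up, however, do not hold as stated. First, for $p<2$ you claim $\e[|X|^p;|X|>t]\ge t^{p-2}\e[|X|^2;|X|>t]$; the inequality is reversed. On $\{|X|>t\}$ with $p-2<0$ one has $|X|^{p-2}<t^{p-2}$, hence $|X|^p<t^{p-2}|X|^2$, so $\e[|X|^p;|X|>t]\le t^{p-2}\e[|X|^2;|X|>t]$, which gives a lower---not upper---bound on $\e[|X|^2;|X|>t]$. The estimate $\e[|X|^2;|X|>t]=o(\gamma(t))$ that you need is still available, but it has to come from applying the construction of Lemma~\ref{lem:construction.gamma} with exponent $2$ directly (possible because Assumption~\ref{ass-m} gives $\e[|X|^2\log(1+|X|)]<\infty$ for every $p$, so the $p=2$ case of that lemma applies), and then taking a common $\gamma$ dominating both truncated moments; it does not follow from the $p$-th truncated moment bound when $p<2$.

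Second, a single split at $d(x)/2$ is not enough for the large-increment contribution when $p<1$. Using \eqref{diff-bound2} alone you would get $\e[|X|^p;|X|>d(x)/2]=o(\gamma(d(x))d(x)^{p-2})$, and $d(x)^{p-2}\gamma(d(x))=d(x)^{p-1}\gamma(d(x))/d(x)\ge |x|^{p-1}\gamma(d(x))/d(x)=\beta(x)$ when $p<1$ (since $d(x)\le|x|$ and $p-1<0$), so this bound is not $o(\beta(x))$. The paper resolves this by splitting the large-increment set a second time at $|x|/2$: on $\{|X|>|x|/2\}$ it uses $\e[|X|^p;|X|>|x|/2]=o(\gamma(|x|)|x|^{p-2})\le o(\beta(x))$ because $t\mapsto\gamma(t)/t$ is decreasing and $|x|\ge d(x)$; on $\{d(x)/2<|X|\le|x|/2\}$ it uses \eqref{diff-bound1}, which supplies the factor $|x|^{p-1}$ rather than $d(x)^{p-1}$. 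You acknowledge that $p<1$ needs care, but the remedy you gesture at does not close this gap without the additional split.
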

    \begin{proof}

    We will start with a Taylor theorem for a thrice differentiable function $U$. 
    Let $y$ be such that $|y|\le d(x)/2$. Then, 
    \begin{equation}\label{eq:taylor}
          \left|U(x+y)-U(x)-\nabla U(x) \cdot y
          -\frac{1}{2}\sum_{i,j}\frac{\partial^2 U(x)}{\partial x_i \partial x_j}y_iy_j\right| \le R_{2,\theta}(x)|y|^{2+\theta},
      \end{equation}
      where 
      \[
          R_{2,\theta}(x)= 
          \sup_{i,j}
          [U_{x_i x_j}]_{\theta, B(x,d(x)/2)}<\infty. 
    \]
Here,  we let for $\theta\in (0,1]$ and open $D$, 
 \[
    [f]_{\theta, D} = 
    \sup_{y,z\in D, y\neq z}  \frac{|f(y)-f(z)|}{|y-z|^\theta}. 
 \]
We can further proceed as follows
    \begin{align*}
      &  \left| \e\left[U(x+X)-U(x)-\frac{1}{2}\Delta U(x)\right]
      \right|\\
      &=\left|\e \left[U(x+X)-U(x);|X|\le d(x)/2\right] -\frac{1}{2}\Delta U(x)\right| \\
      &\hspace{0.5cm}+\left|\e \left[U(x+X)-U(x);|X|> d(x)/2)\right] \right| \\
      &\le\left| \e \left[\left(\nabla U(x)\cdot X +\frac{1}{2}\sum_{i,j}\frac{\partial^2 U(x)}{\partial x_i\partial x_j}X_iX_j\right){\rm 1}(|X|\le d(x)/2)\right]-\frac{1}{2}\Delta U(x)\right|\\
      &\hspace{0.5cm}+R_{2,\theta}(x)\e \left[|X|^{2+\theta};|X|\le d (x)/2\right]
      +\left|\e \left[U(x+X)-U(x);|X|>d(x)/2)\right] \right|. 
    \end{align*} 
    Now rearrange the terms and make use of the assumptions   
    $\e[X_i]=0$, $\mbox{cov}(X_i,X_j)=\delta_{i=j}$  to obtain 
    \begin{align}\label{eq:taylor.stochastic}
      &  \left| \e\left[U(x+X)-u(x)-\frac{1}{2}\Delta U(x)\right]\right|\\ 
      \nonumber 
      &\hspace{1cm} \le 
\left| \e \left[\left(\nabla U(x)\cdot X +\frac{1}{2}\sum_{i,j}\frac{\partial^2 U(x)}{\partial x_i\partial x_j}X_iX_j\right){\rm 1}(|X|>d(x)/2)\right]\right|\\
      \nonumber 
      &\hspace{1cm}+R_{2+\theta}(x)\e \left[|X|^{2+\theta};|X|\le 
      d(x)/2\right]
      +\left|\e \left[U(x+X)-U(x);|X|>d(x)/2\right] \right|. 
    \end{align}
    We will now make use of~\eqref{eq:taylor.stochastic} with $U=u$ 
    and $\theta =1$.  
    Applying the inequalities~\eqref{eq:bound.u}, \eqref{eq:u.bound.main} and~\eqref{eq:harmonic.boundary} 
    one can estimate the remainder  as follows, 
    \[
        |R_{2,\alpha}(x)|\le 
        |x|^{p-1}d(x)^{-2}. 
    \]
    Also, using~\eqref{diff-bound} and~\eqref{diff-bound1}, 
    we can estimate 
    \begin{align*}
      &  |\e \left(u(x+X)-u(x)\right) ;|X|>d(x)) |\\ 
      &\hspace{1cm}\le 
      C\e[|X|^p;|X|>|x|/2)]
      +C|x|^{p-1}\e[|X|;|X|>d(x)/2].
    \end{align*}
    Thus, it follows from~\eqref{eq:taylor.stochastic} that 
    \begin{align*}
      |f(x)|&\le \left|\e\left[\left(\nabla u(x)\cdot X +\frac{1}{2}\sum_{i,j}\frac{\partial^2 u}{\partial x_i\partial x_j}X_iX_j\right){\rm 1}(|X|>d(x)/2)\right]\right|\\
            &\hspace{1cm}+C|x|^{p-1}d(x)^{-2}\e \left[|X|^{3};|X|\le d(x)/2\right]\\
             &\hspace{1cm}+C\e[|X|^p;|X|>|x|/2]
             +C|x|^{p-1}\e[|X|;|X|>d(x)/2].
    \end{align*}
    The partial derivatives of the function $u$
    in the first term can be  estimated via 
    Lemma~\ref{lem:harnack} and~\eqref{eq:harmonic.boundary}, 
    which results in the following estimate 
    \begin{align*}
\left|\e\left[\nabla u(x)\cdot X ;|X|>d(x)/2\right]\right|\le
C|x|^{p-1}\e[|X|;|X|>d(x)/2]. 
    \end{align*}    
    We estimate the terms with the second derivative 
    using~\eqref{eq:bound.u} and~\eqref{eq:harmonic.boundary}, 
    \begin{align*}
\frac{1}{2}\left|\e\left[\sum_{i,j}\frac{\partial^2 u(x)}{\partial x_i\partial x_j}X_iX_j;|X|>d(x)/2\right]\right|\le 
C|x|^{p-1}d(x)^{-1}\e[|X|^2;|X|>d(x)/2]. 
\end{align*}    
Then, 
\begin{align*}
        |f(x)|&\le C\biggl(|x|^{p-1} \e \left[|X|;|X|>
          d(x)/2\right]
          +|x|^{p-1}d(x)^{-1}\e \left[|X|^2;|X|>d(x)/2\right]\\
            &\hspace{1cm}
            +|x|^{p-1}(d(x))^{-2}\e \left[|X|^{3};|X|\le d (x)/2\right]\\
             &\hspace{1cm}+\e[|X|^p;|X|>|x|/2]
             +|x|^{p-1}\e[|X|;|X|>d(x)/2]
      \biggr).
    \end{align*}
    The term with the  moment 
    of the order $3$ 
    can be estimated using~\eqref{eq:gamma.integral.finite} with $p=1$,
    \begin{align*}      
&\e \left[|X|^{3};|X|\le d (x)/2\right]
\le 3 \int_0^{d(x)/2}y^{2}\pr(|X|>y)dy\\
&\hspace{1cm}\le 3 C_A  \int_A^{d(x)/2}y^{2}\frac{\e[|X|;|X|>y]}{y}dy\\
&\hspace{1cm}\le 
C_A+C \int_A^{d(x)/2} \gamma(y) dy
\le C_A + \varepsilon_A d(x) \gamma(d(x)), 
\end{align*}    
where $\varepsilon_A\to 0$ as $A\to \infty$ and 
we used the slow variation of $\gamma$ in the final step. 
Hence, 
\begin{equation}
    \label{eq:third.moment}
    \e \left[|X|^{3};|X|\le d (x)/2\right] = o(1) 
    d(x)\gamma(d(x)),\quad d(x)\to \infty. 
\end{equation}
Then, for $p\ge 2$, using the Markov inequality we can further simplify the right-hand-side as follows, 
    \[
        |f(x)|\le o(1)(\e[|X|^p;|X|>|x|]+|x|^{p-1}\e[|X|;|X|>d(x)/2]
        +|x|^{p-1}\gamma(d(x)))(d(x))^{-1}.
    \]
    Applying Lemma~\ref{lem:construction.gamma} we obtain 
\begin{align*}
    |f(x)| &\le o(1)\left(\gamma(x)|x|^{p-2}+|x|^{p-1}\gamma(d(x))/d(x)
    \right) \\ 
           &\le o(1)|x|^{p-1}\gamma(d(x))/d(x),
\end{align*}
since $d(x)\le |x|$. 
For $p<2$ we have the following estimate 
\[
    |f(x)|\le o(1)|x|^{p-1}\gamma(d(x))/d(x). 
\]

    \end{proof}

\section{Construction of a non-negative supermartingale}

For $x\in K$  let  $\beta(x)$ be the function defined in~\eqref{defn.beta}. 
Let 
\begin{equation}\label{eq:ubeta}
    U_\beta(y) = \int_K G(x,y) \beta(x) dx.  
\end{equation}
By the definition of the Green function 
\begin{equation}\label{eq:laplacian.drift}
    \Delta U(y) = -\beta(y).
\end{equation}

Using~\eqref{eq:green.function.bound0} one can show that $U_\beta$  is well defined.  For that we will estimate the integral in~\eqref{eq:ubeta} in a sequence of Lemmas.
Pick $A<1$ and then   $C_A$ such that 
Lemma~\ref{lem:green.bound0} holds. 
\begin{lemma}\label{lem:ghat.1}
Let the assumption~\ref{ass-g}  hold.  
Then,  there exists a function $\varepsilon(R)\to 0$ such that for $y\in K: |y|>R$,
\begin{equation}\label{eq:ghat.1}
    I_1(y):=\int_{K \cap \{|x|\le |y|, |x-y|\ge A |y|\}}\widehat G(x,y)\beta(x) dx \le \varepsilon(R) u(y).
\end{equation}
\end{lemma}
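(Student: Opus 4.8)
The plan is to split the region of integration $K\cap\{|x|\le|y|,\,|x-y|\ge A|y|\}$ into two pieces according to the size of $|x|$ and use the two different bounds for $\widehat G(x,y)$ that apply there. For $|x|\le|y|$ and $|x-y|\ge A|y|$ the first line of~\eqref{eq:ghat} gives $\widehat G(x,y)\le u(x)u(y)|y|^{-(d-2+2p)}$, so after substituting the definition~\eqref{defn.beta} of $\beta$ and the bound $u(x)\le C|x|^{p-1}d(x)$ from~\eqref{eq:harmonic.boundary} we get
\[
I_1(y)\le \frac{C\,u(y)}{|y|^{d-2+2p}}\int_{K\cap\{|x|\le|y|\}} u(x)\,|x|^{p-1}\frac{\gamma(d(x))}{d(x)}\,dx
\le \frac{C\,u(y)}{|y|^{d-2+2p}}\int_{K\cap\{|x|\le|y|\}} |x|^{2p-2}\gamma(d(x))\,dx,
\]
where I used $u(x)\le C|x|^{p-1}d(x)$ once more to cancel the $d(x)$ in the denominator. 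So it remains to show that
\[
J(R,|y|):=\frac{1}{|y|^{d-2+2p}}\int_{K\cap\{R\le|x|\le|y|\}} |x|^{2p-2}\gamma(d(x))\,dx
\]
is small (uniformly in $|y|>R$, with the contribution of $|x|<R$ handled separately since it is $O(R^{d+2p-2}/|y|^{d+2p-2})\le O(R^{?})\to0$ after dividing — actually this forces us to be a bit careful, see below).

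The main estimate is a dyadic decomposition in $|x|$. Writing $|x|\in[2^k,2^{k+1})$ for $k$ ranging up to roughly $\log_2|y|$, the shell has volume $\asymp 2^{kd}$, on it $|x|^{2p-2}\asymp 2^{k(2p-2)}$, and since $d(x)\le|x|$ and $\gamma$ is decreasing we can only say $\gamma(d(x))\le\gamma(\text{something small})$ pointwise — so a crude pointwise bound on $\gamma$ is not enough. Instead I would integrate $\gamma(d(x))$ over each shell using a co-area / slicing argument: foliate the shell by the level sets $\{d(x)=t\}$, whose $(d-1)$-dimensional measure inside $\{|x|\asymp 2^k\}$ is $O(2^{k(d-1)})$ for $t\lesssim 2^k$, giving
\[
\int_{\{|x|\asymp 2^k\}}\gamma(d(x))\,dx \le C\,2^{k(d-1)}\int_0^{C2^k}\gamma(t)\,dt \le C\,2^{k(d-1)}\cdot 2^k\,\gamma(2^k)=C\,2^{kd}\gamma(2^k),
\]
where the middle inequality uses that $t\mapsto\int_0^t\gamma(s)\,ds$ is regularly varying of index $1$ (a consequence of $\gamma$ slowly varying), hence $\int_0^{C2^k}\gamma \le C\,2^k\gamma(2^k)$ for large $k$. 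The use of the Lipschitz/star-like assumption~\ref{ass-g} enters precisely here: it guarantees the level sets $\{d(x)=t\}\cap\{|x|\asymp 2^k\}$ have surface measure $O(2^{k(d-1)})$ uniformly (this is where a less regular cone would fail). Summing the dyadic contributions,
\[
\int_{K\cap\{R\le|x|\le|y|\}}|x|^{2p-2}\gamma(d(x))\,dx \le C\sum_{2^k\le C|y|} 2^{k(2p-2)}2^{kd}\gamma(2^k)= C\sum_{2^k\le C|y|}2^{k(d+2p-2)}\gamma(2^k).
\]

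Finally, multiply by $|y|^{-(d-2+2p)}$. Since the summand $2^{k(d+2p-2)}\gamma(2^k)$ grows (its exponent $d+2p-2>0$ is positive and $\gamma$ is slowly varying), the sum is dominated by its last term, $\asymp |y|^{d+2p-2}\gamma(|y|)$; so $J(R,|y|)\le C\gamma(|y|)+(\text{contribution from }|x|<R)$. The contribution from $|x|<R$ is at most $C R^{d+2p-2}\gamma(R_0)/|y|^{d+2p-2}$, which for fixed $R$ tends to $0$ as $|y|\to\infty$ but is not uniformly small for $|y|$ close to $R$; to fix this I would instead split at $|x|\le\sqrt{R\,|y|}$ versus $|x|>\sqrt{R\,|y|}$, or more cleanly observe that for $|y|>R$ the whole sum over $2^k\le C|y|$ gives $J(R,|y|)\le C\gamma(R)\cdot\sup_{t\ge R}\gamma(t)/\gamma(R)$-type control — the honest statement is that $J(R,|y|)\le C\gamma(R)$ for all $|y|>R$ because, $\gamma$ being decreasing and slowly varying, $\gamma(2^k)\le\gamma(R)$ for $2^k\ge R$ and the geometric-type sum is controlled by its largest term times a constant. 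Setting $\varepsilon(R):=C\gamma(R)$, which tends to $0$ as $R\to\infty$ since $\gamma(t)\to0$ (indeed $\int^\infty\gamma(t)/t\,dt<\infty$ forces $\gamma(t)\to0$), gives $I_1(y)\le\varepsilon(R)u(y)$ and completes the proof. The one genuine obstacle is the slicing estimate on $\int_{\text{shell}}\gamma(d(x))\,dx$: turning the qualitative Lipschitz-star-like hypothesis into the quantitative surface-measure bound $O(2^{k(d-1)})$ for the level sets of $d(\cdot)$, and combining it with the regular-variation estimate $\int_0^t\gamma\le Ct\gamma(t)$.
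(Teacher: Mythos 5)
Your argument is correct in substance and reaches the right conclusion, but it organizes the estimate quite differently from the paper. After the common first step (inserting the first line of~\eqref{eq:ghat} and the bound $u(x)\le C|x|^{p-1}d(x)$ so that only $\int_{K\cap\{|x|\le|y|\}}|x|^{2p-2}\gamma(d(x))\,dx$ remains), the paper passes to polar coordinates $x=r\sigma$ and splits the \emph{spherical} domain $\Sigma$ into a thin layer $\Sigma_0=\{d_\Sigma(\sigma)\le\varepsilon_0\}$ near $\partial\Sigma$ and the bulk $\Sigma_1$. On $\Sigma_0$ one simply bounds $\gamma$ by $\sup\gamma$ and uses that $\mathrm{area}(\Sigma_0)$ is arbitrarily small; on $\Sigma_1$ one has $d(r\sigma)\gtrsim r\varepsilon_0$ (from~\eqref{eq:dist}) so the radial integral produces $\gamma(|y|)\to0$. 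This is more elementary than your route: it needs only that a thin collar of $\Sigma$ has small area (essentially dominated convergence), not a quantitative bound on the $(d-1)$-measure of the level sets of $d(\cdot)$, which is the extra geometric input your coarea/dyadic-shell argument requires. Your version does work for Lipschitz cones (the level-set bound $\mathcal H^{d-1}(\{d=t\}\cap\{|x|\asymp 2^k\})\lesssim 2^{k(d-1)}$ is correct there, or equivalently the volume bound $|\{d<t\}\cap\text{shell}|\lesssim t\,2^{k(d-1)}$), and once that is in place your regular-variation estimate $\int_0^T\gamma\lesssim T\gamma(T)$ and the domination of the dyadic sum by its top term are fine. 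One small inaccuracy worth flagging: your claim $J(R,|y|)\le C\gamma(R)$ is slightly off because the finitely many small-$k$ shells contribute a bounded amount $C_0$, giving $J\le C_0|y|^{-(d+2p-2)}+C\gamma(|y|)$; for $|y|>R$ this is $\le C_0R^{-(d+2p-2)}+C\gamma(R)=:\varepsilon(R)\to0$, which is still of the required form, so the conclusion stands.
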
    
\begin{proof}
Put $d_\Sigma(\sigma) = {\rm dist}(\sigma, \partial \Sigma) $. 
Then there exists a constant $c_0>0$ such that
\begin{equation}\label{eq:dist}
d(r \sigma) < r d_\Sigma(\sigma) < c_0 d(r\sigma).   
\end{equation}
We have, using~\eqref{eq:harmonic.boundary}, 
    \begin{align*}
        \frac{I_1(y)}{u(y)}&\le |y|^{2-d-2p}
    \int_{K \cap \{|x|\le |y|\}} 
    u(x) |x|^{p-1}\frac{\gamma(d(x))}{d(x)}dx\\
              &\le C|y|^{2-d-2p}
    \int_{K \cap \{|x|\le |y|\}} 
    |x|^{2p-2} \gamma(d(x))dx\\
              &\le C |y|^{2-d-2p}
              \int_{\Sigma} d\sigma \int_0^{|y|} r^{2p+d-3}
              \gamma(d(r\sigma))dr.
    \end{align*}
 Then we can split the $\Sigma = \Sigma_0\cup\Sigma_1$ in such a way that 
 for $\Sigma_0$ contains all $\sigma\in\Sigma$ with the distance 
 $d_\Sigma(\sigma)\le \varepsilon_0$, where 
 $\varepsilon_0 $ is sufficiently small  to ensure that 
 \begin{multline*} 
C |y|^{2-d-2p}
\int_{\Sigma_0} d\sigma \int_0^{|y|} r^{2p+d-3}
\gamma(d( r\sigma))dr\\ 
\le 
\sup_{t\ge 0}  
\gamma(t)
C |y|^{2-d-2p}
\int_{\Sigma_0} d\sigma \int_0^{|y|} r^{2p+d-3}dr 
<C\mathrm{area}(\Sigma_0)\le 
\varepsilon/2. 
 \end{multline*}
Next  using~\eqref{eq:dist} and regular variation of $\gamma$, 
\begin{align*}
&C |y|^{2-d-2p}
              \int_{\Sigma_1} d\sigma \int_0^{|y|} r^{2p+d-3}
              \gamma( d(r\sigma))dr\\ 
&\hspace{1cm}\le 
C |y|^{2-d-2p}
              \int_0^{|y|} r^{2p+d-3}\gamma(c_0^{-1} r\varepsilon_0)dr\\
&\hspace{1cm}\le 
C \gamma( |y| )\le  
\varepsilon/2,
\end{align*}
for $|y|>R$ and sufficiently large $R$ due to the fact that $\gamma(t)\to 0.$

\end{proof}

\begin{lemma}\label{lem:ghat.2}
Let the assumption~\ref{ass-g}  hold. 
Then,  there exists a function $\varepsilon(R)\to 0$ such that for $y\in K: |y|>R$,
\begin{equation}\label{eq:ghat.2}
    I_2(y):=\int_{K \cap \{|x|\ge |y|, |x-y|\ge A |y|\}}
    \widehat G(x,y)\beta(x) dx \le \varepsilon(R) u(y).
\end{equation}
\end{lemma}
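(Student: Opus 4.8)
The plan is to mirror the proof of Lemma~\ref{lem:ghat.1} with the roles of $|x|$ and $|y|$ interchanged, which on the region $\{|x|\ge|y|,\ |x-y|\ge A|y|\}$ means using the second case of $\widehat G$, namely $\widehat G(x,y)=u(x)u(y)|x|^{2-d-2p}$. Substituting this and $\beta(x)=|x|^{p-1}\gamma(d(x))/d(x)$, and then applying the upper bound~\eqref{eq:harmonic.boundary} in the form $u(x)\le C|x|^{p-1}d(x)$, the factors of $d(x)$ cancel and one is left with
\[
\frac{I_2(y)}{u(y)}\le C\int_{K\cap\{|x|\ge|y|\}}|x|^{2p-2-d-2p}\gamma(d(x))\,dx
= C\int_{K\cap\{|x|\ge|y|\}}|x|^{-2-d}\,|x|^{2p}\,|x|^{-2p}\gamma(d(x))\,dx,
\]
so after passing to polar coordinates $x=r\sigma$ the radial integral is $\int_{|y|}^\infty r^{-3}\cdot r^{d-1}\cdot\ldots$; more precisely $u(x)\le C|x|^{p-1}d(x)$ together with $\widehat G$ gives integrand $\le C u(y)|x|^{-d-p-1}d(x)\gamma(d(x))\cdot|x|^{p-1}/d(x)=Cu(y)|x|^{-d}\gamma(d(x))/|x|^{2}\cdot|x|^{2p-2p}$, i.e. after the angular integration one gets a bound $C u(y)\int_{|y|}^\infty r^{-3}\gamma(r)\,dr$, and this tends to $0$ as $|y|=R\to\infty$ since $\gamma$ is bounded (indeed slowly varying and vanishing at infinity), so $\int_{|y|}^\infty r^{-3}\gamma(r)\,dr\le\gamma(|y|)\int_{|y|}^\infty r^{-3}\,dr\to 0$.

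Concretely the steps are: (i) on the relevant region replace $\widehat G(x,y)$ by $u(x)u(y)/|x|^{d-2+2p}$ and pull $u(y)$ out of the integral; (ii) bound $u(x)\le C|x|^{p-1}d(x)$ using~\eqref{eq:harmonic.boundary} so that the explicit $1/d(x)$ coming from $\beta(x)$ is absorbed, leaving an integrand of the form $C\,|x|^{-d-2}\gamma(d(x))$ up to a $|x|^{2p-2}\cdot|x|^{2-2p}$ bookkeeping that produces the net power $-d-2$; (iii) write $x=r\sigma$, integrate the bounded angular part over $\Sigma$, and reduce to $C\int_{|y|}^\infty r^{-3}\gamma(r)\,dr$ (the exponent is $d-1$ from the Jacobian minus $d-2+2p$ from $|x|^{d-2+2p}$ plus $2p-2$ from $u(x)|x|^{p-1}/d(x)\le C|x|^{2p-2}$, giving $r^{-3}$); (iv) use monotonicity/slow variation of $\gamma$ and $\gamma(t)\to 0$ to conclude the tail integral is $\le \varepsilon(R)$ with $\varepsilon(R)\to 0$. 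In fact here one does not even need the splitting of $\Sigma$ used in Lemma~\ref{lem:ghat.1}, because the extra decaying factor $r^{-3}$ (versus $r^{2p+d-3}$ growing in that lemma) already forces convergence of the radial integral and smallness of its tail; a single application of $\gamma\le\sup\gamma<\infty$ on the bulk and $\gamma(r)\le\gamma(|y|)$ near infinity suffices.

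I do not expect a genuine obstacle here: this lemma is strictly easier than Lemma~\ref{lem:ghat.1} because the integration is over the far region $|x|\ge|y|$ where both $\widehat G$ and the resulting radial density decay, so there is no competition between a growing radial factor and the smallness of $\gamma$ near the boundary cone. The only point requiring a little care is the bookkeeping of exponents to verify that the net radial exponent is indeed negative enough (one gets $r^{-3}$, comfortably integrable at infinity in every dimension $d\ge1$), and making sure that the angular integral $\int_\Sigma(\text{bounded})\,d\sigma=\mathrm{area}(\Sigma)<\infty$; both are routine. One then records that $\varepsilon(R):=C\gamma(R)\to 0$ (or $C\int_R^\infty r^{-3}\gamma(r)\,dr\to 0$), which is exactly the claimed form of the bound, completing the proof.
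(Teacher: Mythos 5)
There is a genuine arithmetic error in your proposal that changes the nature of the problem. Using $\widehat G(x,y)=u(x)u(y)/|x|^{d-2+2p}$, $\beta(x)=|x|^{p-1}\gamma(d(x))/d(x)$, and $u(x)\le C|x|^{p-1}d(x)$, the integrand is bounded by
\[
C\,u(y)\,|x|^{(p-1)+(p-1)-(d-2+2p)}\,\gamma(d(x))
= C\,u(y)\,|x|^{-d}\,\gamma(d(x)),
\]
so in polar coordinates the radial factor is $r^{-d}\cdot r^{d-1}=r^{-1}$, not $r^{-3}$. In fact your own bookkeeping, $(d-1)-(d-2+2p)+(2p-2)$, equals $-1$; the $-3$ you wrote down is inconsistent with it, and the intermediate exponent $-d-p-1$ should have been $-d-p+1$. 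This matters a great deal, because with exponent $-1$ the radial integral $\int_{|y|}^\infty r^{-1}\,(\cdots)\,dr$ is \emph{not} trivially convergent, and the step ``bound $\gamma$ by $\sup\gamma$ and integrate'' produces the divergent $\int_{|y|}^\infty r^{-1}\,dr$. The lemma is therefore not ``strictly easier'' than Lemma~\ref{lem:ghat.1}; it is the critical borderline case that motivates the integrability requirement \eqref{eq:gamma.integral.finite.3} on $\gamma$.

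The second gap is that $\gamma$ is evaluated at $d(x)=d(r\sigma)$, which for $\sigma$ near $\partial\Sigma$ is much smaller than $r$, so you cannot replace $\gamma(d(r\sigma))$ by $\gamma(r)$ in the tail. The paper instead uses the comparison $d(r\sigma)\asymp r\,d_\Sigma(\sigma)$, changes variables in the inner integral to reduce to $\int_{c_0^{-1}|y|d_\Sigma(\sigma)}^\infty r^{-1}\gamma(r)\,dr$, and then applies Fubini to exchange the order of integration, arriving at
\[
C\int_0^\infty r^{-1}\gamma(r)\,\min\!\left(\frac{rc_0}{|y|},c_1\right)dr
=\frac{c_0}{|y|}\int_0^{c_0c_1|y|}\gamma(r)\,dr
+c_1\int_{c_0c_1|y|}^\infty r^{-1}\gamma(r)\,dr.
\]
The second term is small for large $|y|$ by \eqref{eq:gamma.integral.finite.3}; the first is small because $\gamma(t)\to 0$ (Ces\`{a}ro averaging). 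Neither step is captured by $\varepsilon(R)=C\gamma(R)$ or $\varepsilon(R)=\int_R^\infty r^{-3}\gamma\,dr$, which is what your argument was built around. To salvage the proposal, redo the exponent bookkeeping, reintroduce the $\sigma$-dependence of $d(r\sigma)$ via \eqref{eq:dist}, and carry out the Fubini argument as in the paper.
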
    
\begin{proof}
To prove the statement it is sufficient to show that for any $\varepsilon>0$ there exists $R>0$ such that 
$I_2(y)\le \varepsilon u(y)$  for all $y:|y|>R$. 
    We have, using~\eqref{eq:harmonic.boundary}, 
    \begin{align*}
        \frac{I_2(y)}{u(y)}&\le 
        \int_{K \cap \{|x|\ge |y|\}} 
    u(x)|x|^{2-d-2p}
 |x|^{p-1}\frac{\gamma(d(x))}{d(x)}dx\\
              &\le C
    \int_{K \cap \{|x|\ge |y|\}} 
    |x|^{-d} \gamma(d(x))dx\\
              &\le C 
              \int_{\Sigma} d\sigma 
              \int_{|y|}^{\infty} r^{-1}
              \gamma(c_0^{-1}rd_\Sigma(\sigma))dr\\ 
  &\le C  \int_{\Sigma} d\sigma \int_{|y|c_0^{-1}d_\Sigma(\sigma)}^{\infty} r^{-1}\gamma(r )dr.  
    \end{align*}
 Let  $c_1=diam(\Sigma)$. 
 Then,  
 \begin{align*}
     & C \int_{\Sigma} d\sigma 
              \int_{c_0^{-1}|y|d_\Sigma(\sigma)}^{\infty} r^{-1}\gamma(r )dr 
              =
              C \int_0^\infty r^{-1}\gamma(r )
              \int_{\Sigma\cap \{c_0^{-1}d_\Sigma(\sigma)|y|<r\}} d\sigma 
              dr
              \\
      &        \le C 
              \int_{0}^{\infty} r^{-1}\gamma(r )
              \min \left(\frac{rc_0}{|y|}, c_1\right)
              dr 
              \le \varepsilon,
 \end{align*}
 since 
 \[
    \int_{0}^{\infty} r^{-1}\gamma(r )
    \min \left(\frac{rc_0}{|y|}, c_1\right)
    dr 
    =\frac{c_0}{|y|}\int_0^{c_0c_1|y|} \gamma(r) dr 
    +c_1 \int_{c_0c_1|y|}^\infty \gamma(r)r^{-1} dr
 \]
 can be made small 
for $|y|>R$ and sufficiently large $R$ due to the  convergence of the integral
and the fact that $\gamma$ converges to $0$. 
\end{proof}

\begin{lemma}\label{lem:ghat.3}
Let the assumption~\ref{ass-g} hold. 
Then,  there exists a bounded monotone 
function $\varepsilon(R)\to 0$ such that for $y\in K: d(y)>R$,
\begin{equation}\label{eq:ghat.3}
    I_3(y):=
    \int_{K \cap \{d(y)/2<|x-y|<A|y|\}}\widehat G(x,y)\beta(x) dx \le \varepsilon(R) u(y).
\end{equation}
\end{lemma}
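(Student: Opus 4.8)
\end{lemma}
\begin{proof}
The plan is to reduce the weighted Green-function integral over the intermediate annulus to a purely geometric volume estimate, and then to balance the decay of $\gamma$ against the logarithmic size of that annulus. First I would simplify the integrand: on the region $d(y)/2<|x-y|<A|y|$ the relevant branch of $\widehat G$ gives
\[
\widehat G(x,y)\beta(x)=\frac{u(x)u(y)}{|y|^{p-1}}\cdot\frac{\gamma(d(x))}{d(x)\,|x-y|^{d}} .
\]
Since $A<1$, on this region $(1-A)|y|\le|x|\le(1+A)|y|$, so $|x|^{p-1}\le C_A|y|^{p-1}$; together with the upper bound $u(x)\le C|x|^{p-1}d(x)$ of Assumption~\ref{ass-g} this yields $\widehat G(x,y)\beta(x)\le C_A u(y)\gamma(d(x))|x-y|^{-d}$, hence
\[
I_3(y)\le C_A\,u(y)\int_{K\cap\{d(y)/2<|x-y|<A|y|\}}\frac{\gamma(d(x))}{|x-y|^{d}}\,dx .
\]
(If $d(y)\ge 2A|y|$ the annulus is empty and $I_3(y)=0$.) It thus suffices to bound the last integral by $\varepsilon(d(y))$ for a bounded monotone $\varepsilon(R)\to0$.

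Next I would decompose the annulus into dyadic shells $S_j=\{x\in K:\ r_j\le|x-y|<2r_j\}$ with $r_j=2^{j-1}d(y)$, $j=0,\dots,N$ and $2r_N\asymp A|y|$; on $S_j$ one has $|x-y|^{-d}\le r_j^{-d}$, so the $j$-th shell contributes at most $r_j^{-d}\int_{B(y,2r_j)\cap K}\gamma(d(x))\,dx$. Here enters the only use of the Lipschitz geometry of $K$: since $K$ is a Lipschitz cone, $\partial K\cap B(z,\rho)$ is, by the Lipschitz property together with the homogeneity of the cone, covered by boundedly many Lipschitz graphs of total $(d-1)$-measure $\le c_K\rho^{d-1}$, so a $t$-neighbourhood of it has volume $\le c_K t\rho^{d-1}$; consequently $\mu_j(s):=|\{x\in B(y,2r_j)\cap K:\ d(x)<s\}|\le c_K s r_j^{d-1}$ for $s\le r_j$, while trivially $\mu_j(s)\le C r_j^{d}$ for all $s$. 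Writing $\gamma(d(x))=\gamma(4r_j)+\int_{d(x)}^{4r_j}(-\gamma'(s))\,ds$ (legitimate since $d(x)<4r_j$ on $B(y,2r_j)$, as $d(y)\le 2r_j$), integrating in $x$ first, using the two bounds on $\mu_j$, then the integration by parts $\int_0^{r_j}s(-\gamma'(s))\,ds=-r_j\gamma(r_j)+\int_0^{r_j}\gamma(s)\,ds$ (the boundary term at $0$ vanishes because $\gamma$ is bounded), and finally Karamata's theorem $\int_0^{r_j}\gamma(s)\,ds\le C r_j\gamma(r_j)$ (valid as $\gamma$ is slowly varying and $r_j\ge d(y)/2$ is large), one obtains $\int_{B(y,2r_j)\cap K}\gamma(d(x))\,dx\le C r_j^{d}\gamma(r_j)$, i.e. $\int_{S_j}\gamma(d(x))|x-y|^{-d}\,dx\le C\gamma(r_j)$.

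It remains to sum over $j$. Since $\gamma$ is decreasing, $\gamma(r_j)\le(\log2)^{-1}\int_{r_{j-1}}^{r_j}s^{-1}\gamma(s)\,ds$ for $j\ge1$, so
\[
\int_{K\cap\{d(y)/2<|x-y|<A|y|\}}\frac{\gamma(d(x))}{|x-y|^{d}}\,dx\le C\sum_{j=0}^{N}\gamma(r_j)\le C\Bigl(\gamma(d(y)/2)+\int_{d(y)/2}^{\infty}\frac{\gamma(s)}{s}\,ds\Bigr)=:\varepsilon(d(y)).
\]
By Lemma~\ref{lem:construction.gamma}, $\gamma(t)\to0$ and $\int_1^\infty s^{-1}\gamma(s)\,ds<\infty$, hence $\varepsilon(R)\to0$; moreover $\varepsilon$ is bounded and non-increasing. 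Therefore $I_3(y)\le C_A\varepsilon(d(y))u(y)\le C_A\varepsilon(R)u(y)$ whenever $d(y)>R$, which is the claim after renaming $C_A\varepsilon$ as $\varepsilon$.

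The main obstacle is precisely the logarithm: the crude estimate $\int_{d(y)/2<|x-y|<A|y|}|x-y|^{-d}\,dx\asymp\log(|y|/d(y))$ is \emph{not} small when $d(y)$ stays bounded and $|y|\to\infty$, so one cannot estimate $\gamma(d(x))$ by a constant on a whole dyadic shell; the layer-cake computation above is exactly what extracts the factor $\gamma(r_j)$ from each shell, after which the convergence of $\int_1^{\infty}s^{-1}\gamma(s)\,ds$ absorbs the remaining $\asymp\log(|y|/d(y))$ shells. Everything else is routine once the Lipschitz tubular-neighbourhood bound is in hand.
\end{proof}
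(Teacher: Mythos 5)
Your proof is correct and takes essentially the same approach as the paper: the same reduction using Assumption~\ref{ass-g} and the relevant branch of $\widehat G$, the same dyadic decomposition of the annulus $d(y)/2<|x-y|<A|y|$, the same combination of a Lipschitz tubular-neighbourhood (coarea) volume bound with Karamata's slow-variation estimate to extract $\gamma(r_j)$ from each shell, and the same conversion of the resulting sum into the tail integral $\int_{d(y)/2}^{\infty}\gamma(s)s^{-1}\,ds$, which tends to zero by~\eqref{eq:gamma.integral.finite.3}. The only difference is that you spell out the tubular-neighbourhood step via a layer-cake/integration-by-parts computation where the paper states the resulting volume bound directly.
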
    
\begin{proof}
First note that $|x-y|<A|y|$ with $A<1$ implies that 
\[
(1-A)|y|<|x|<(1+A)|y|. 
\]    
    We have, 
\begin{align*}
    \frac{I_3(y)}{u(y)}&\le C|y|^{1-p} 
    \int_{K \cap \left\{d(y)/2<|x-y|<A|y|\right\}}
    \frac{u(x)|x|^{1-p}}{|x-y|^d}\beta(x) dx\\
&\le C|y|^{1-p}  \int_{K \cap \left\{d(y)/2<|x-y|<A|y|\right\}}
    \frac{d(x)}{|x-y|^d}|x|^{p-1}\frac{\gamma(d(x))}{d(x)} dx\\
          &\le
    C \int_{K \cap \left\{d(y)/2<|x-y|<A|y|\right\}}
    \frac{\gamma(d(x))}{|x-y|^d} dx. 
\end{align*}    
Splitting the integral into regions we obtain 
\begin{align*}
    \frac{I_3(y)}{u(y)}&\le C\sum_{n=0}^{[\log_2 A|y|/d(y)]}
    \int_{K \cap \left\{2^{n-1}d(y)<|x-y|<
    2^{n}d(y)\right\}}
    \frac{\gamma(d(x))}{|x-y|^d} dx\\
    &\le     
    \sum_{n=0}^{[\log_2 A|y|/d(y)]}
    \frac{C}
    {2^{dn}d(y)^{d}}
    \int_{K \cap \left\{2^{n-1}d(y)<|x-y|<
    2^{n}d(y)\right\}}
    \gamma(d(x))dx\\
    &\le 
    \sum_{n=0}^{[\log_2 A|y|/d(y)]}
    \frac{C}
    {2^{dn}d(y)^{d}}
    (2^nd(y))^{d-1}
    \int_{0}^{2^nd(y)}
    \gamma(r)dr\\
    &\le C
    \sum_{n=0}^{[\log_2 A|y|/d(y)]}
    \gamma(2^nd(y)),
\end{align*}
using the slow variation of $\gamma$ in the last inequality
and the definition of $A$.  
Then, 
\begin{align*}
    \frac{I_3(y)}{u(y)}\le C\int_0^{\infty}
    \gamma(2^{t-1}d(y)) dt 
    \le C \int_{1/2}^{\infty}
    \frac{\gamma(zd(y))}{z} dz 
    =
    C \int_{d(y)/2}^{\infty}
    \frac{\gamma(zd(y))}{z} dz. 
\end{align*}
We obtain immediately that for $y:d(y)>R$, 
\[
    \frac{I_3(y)}{u(y)}\le C 
    \int_{R/2}^{\infty}
    \frac{\gamma(zd(y))}{z} dz, 
\]
which is finite and converges to $0$ as $R\to \infty$. 
\end{proof}    

\begin{lemma}\label{lem:ghat.4}
Let the assumption~\ref{ass-g} 
hold. 
Then,  there exists a bounded monotone decreasing function 
$\varepsilon(R)\to 0$, as $R\to\infty$,  such that for $y\in K: d(y)>R$,
\begin{equation}\label{eq:ghat.4}
    I_4(y):=\int_{K \cap \{|x-y|<d(y)/2\}}\widehat G(x,y)\beta(x) dx \le \varepsilon(R) u(y).
\end{equation}
\end{lemma}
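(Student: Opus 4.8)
The plan is to exploit that on the diagonal block $\{\,|x-y|<d(y)/2\,\}$ both $|x|$ and $d(x)$ stay comparable to $|y|$ and $d(y)$, so that the weight $\beta(x)$ is essentially constant there; this reduces the estimate to the elementary computation of $\int\widehat G(x,y)\,dx$ over a Euclidean ball. I take the ambient dimension to be $d\ge2$, the one-dimensional case being elementary (then $p=1$, $u(x)=|x|$ and the Green kernel $2(x\wedge y)$ is bounded near the diagonal, so the same argument applies; in any case the last branch of $\widehat G$ in \eqref{eq:ghat} then vanishes).

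First I would record the geometry. Since $d(y)=\mathrm{dist}(y,\partial K)$, the ball $B(y,d(y)/2)$ lies in $K$, so the integration region is exactly $B(y,d(y)/2)$; moreover on it one has $d(y)/2<d(x)<3d(y)/2$ and, because $d(y)\le|y|$, also $|y|/2<|x|<3|y|/2$. Hence, using only the monotonicity of $\gamma$,
\[
\beta(x)=|x|^{p-1}\,\frac{\gamma(d(x))}{d(x)}\;\le\;C\,|y|^{p-1}\,\frac{\gamma(d(y)/2)}{d(y)},\qquad x\in B(y,d(y)/2),
\]
so that
\[
I_4(y)\;\le\;C\,|y|^{p-1}\,\frac{\gamma(d(y)/2)}{d(y)}\int_{B(y,d(y)/2)}\widehat G(x,y)\,dx .
\]

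Next I would bound the remaining integral. For $d\ge3$ the relevant branch of $\widehat G$ on this block is $|x-y|^{2-d}$, and polar coordinates give $\int_{B(0,r)}|z|^{2-d}\,dz=c_d r^2$, hence the integral is at most $c_d(d(y))^2$. For $d=2$ the branch is $\ln(d(y)/|x-y|)$, and the substitution $z=d(y)w$ gives $\int_{B(0,d(y)/2)}\ln(d(y)/|z|)\,dz=c\,(d(y))^2$ with $c=2\pi\int_0^{1/2}w\ln(1/w)\,dw<\infty$. In either case
\[
I_4(y)\;\le\;C\,|y|^{p-1}\,\frac{\gamma(d(y)/2)}{d(y)}\,(d(y))^2\;=\;C\,|y|^{p-1}d(y)\,\gamma(d(y)/2)\;\le\;C\,u(y)\,\gamma(d(y)/2),
\]
where the last inequality is the lower bound \eqref{eq:harmonic.boundary.lower} of Assumption~\ref{ass-g}. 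Since $\gamma$ is monotone decreasing with $\gamma(t)\to0$, the function $\varepsilon(R):=C\gamma(R/2)$ (extended by its value at $R=1$ for $R<1$) is bounded, monotone decreasing, tends to $0$ as $R\to\infty$, and satisfies $I_4(y)\le\varepsilon(R)u(y)$ whenever $d(y)>R$.

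I do not expect any serious obstacle in this lemma: it is the most elementary of the four blocks, since, unlike Lemmas~\ref{lem:ghat.1}--\ref{lem:ghat.3}, it uses only the crude two-sided bound \eqref{eq:harmonic.boundary}--\eqref{eq:harmonic.boundary.lower} together with the monotonicity of $\gamma$, and neither any finer boundary behaviour of $u$ or $G$ nor even the slow variation of $\gamma$. The only point deserving a little care is the logarithmic singularity of $\widehat G$ when $d=2$, which is absorbed by the explicit integral above.
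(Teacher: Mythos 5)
Your proof is correct and follows essentially the same route as the paper: restrict to the ball $B(y,d(y)/2)$, observe that $|x|$ and $d(x)$ are comparable to $|y|$ and $d(y)$ there, bound the integral of the diagonal branch of $\widehat G$ over that ball by $C(d(y))^2$, and then use the lower bound \eqref{eq:harmonic.boundary.lower} to turn $|y|^{p-1}d(y)$ into $u(y)$. The only difference is that you spell out the $d=2$ logarithmic integral, whereas the paper treats $d\ge3$ and declares $d=2$ analogous; your observation that only monotonicity of $\gamma$ (not slow variation) is needed here is a harmless simplification of the same estimate.
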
    

\begin{proof}
We consider the case $d\ge 3$, as the case $d=2$ is similar. 
For $|x-y|\le d(y)/2$ we  use the bound 
\begin{align*}
    I_{4}(y) &\le C
\int_{K \cap \{|x-y|\le d(y)/2\}}
\frac{1}{|x-y|^{d-2}}\beta(x) dx\\ 
&\le 
C
\int_{K \cap \{|x-y|\le d(y)/2\}}
\frac{|x|^{p-1}}{|x-y|^{d-2}}\frac{\gamma(d(x))}{d(x)} dx
\end{align*}
Note that $|x-y|\le d(y)/2$ implies that
$\frac{|y|}{2}\le |x|\le\frac{3|y|}{2}$ and
$\frac{d(y)}{2}\le d(x)\le\frac{3d(y)}{2}$.
Using these estimates and~\eqref{eq:harmonic.boundary.lower} we obtain 
\[
\frac{I_4(y)}
{u(y)}\le 
\frac{C\gamma(d(y))}{d(y)^2}
\int_{ \{|x-y|\le d(y)/2\}} \frac{dx}{|x-y|^{d-2}}   
\le C\gamma(d(y)),
\]
implying the statement of the theorem since 
$\gamma$ is monotone. 
\end{proof}

\begin{lemma}\label{lem:ubeta.bound0}
Let the assumption~\ref{ass-g} hold. 
Then, $U_\beta(y)$ is finite. 
Moreover,  there exists a bounded and monotone 
function $\varepsilon(R)\to 0$, as $R\to \infty$, such that for $y\in K: d(y)>R$, 
     the following estimate is valid
     \begin{equation}\label{eq:ubeta.bound0}
         U_\beta(y)\le \varepsilon(R) u(y).
     \end{equation}    
\end{lemma}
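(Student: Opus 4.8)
The plan is to reduce the statement entirely to the four estimates of Lemmas~\ref{lem:ghat.1}--\ref{lem:ghat.4}. First I would invoke Lemma~\ref{lem:green.bound0} to replace $G$ by $\widehat G$, so that $U_\beta(y)\le C_A\int_K\widehat G(x,y)\beta(x)\,dx$. Then I would observe that the four domains occurring in the definition~\eqref{eq:ghat} of $\widehat G$ cover $K$: the first two together give $\{|x-y|\ge A|y|\}$, and the last two cover $\{|x-y|<A|y|\}$, with the small technical point that when $d(y)/2>A|y|$ the third region is empty but the fourth already contains $\{|x-y|<A|y|\}$. This yields $\int_K\widehat G(x,y)\beta(x)\,dx\le I_1(y)+I_2(y)+I_3(y)+I_4(y)$, the $I_j$ being exactly the quantities bounded in Lemmas~\ref{lem:ghat.1}--\ref{lem:ghat.4}; any overlap of the regions (which occurs only when $d(y)/2>A|y|$) only helps for an upper bound.

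For $y$ with $d(y)>R$ one automatically has $|y|\ge d(y)>R$, so all four lemmas apply and give $I_j(y)\le\varepsilon_j(R)u(y)$ with $\varepsilon_j(R)\to0$. To produce the single bounded nonincreasing $\varepsilon$ required by the statement, I would replace $\varepsilon_1,\varepsilon_2$ (which come with no monotonicity) by their nonincreasing envelopes $R\mapsto\sup_{R'\ge R}\varepsilon_j(R')$, which are still bounded and tend to $0$, and then set $\varepsilon(R):=C_A(\varepsilon_1(R)+\varepsilon_2(R)+\varepsilon_3(R)+\varepsilon_4(R))$. This gives \eqref{eq:ubeta.bound0}, and, fixing any $R$, also the finiteness of $U_\beta(y)$ for $d(y)>R$.

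The one point still to address is finiteness of $U_\beta(y)$ when $d(y)$ is small. Here I would use the same four-fold splitting, but now only convergence of each piece is needed. In the near-diagonal region ($I_4$) the kernel $\widehat G(x,y)$ has the locally integrable singularity $|x-y|^{2-d}$ (logarithmic if $d=2$), while $d(x)$ is comparable to $d(y)>0$ there, so $\beta$ is bounded and the integral converges. In the other three regions $x$ stays at distance at least $\min(A|y|,d(y)/2)>0$ from $y$; using $u(x)\le C|x|^{p-1}d(x)$ the factor $u(x)$ in $\widehat G$ absorbs the $d(x)^{-1}$ coming from $\beta$, and then the boundedness of $\gamma$ (which may be taken constant on $(0,1]$) together with the decay of $\widehat G$ in $|x|$ in region $I_2$ forces convergence, exactly along the lines of the proofs of Lemmas~\ref{lem:ghat.1}--\ref{lem:ghat.3}. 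I expect the main obstacle to be purely bookkeeping: checking that the four regions of~\eqref{eq:ghat} really cover $K$ for all values of $A,|y|,d(y)$, and handling the small-$d(y)$ case; all of the substantive analysis has already been carried out in Lemmas~\ref{lem:ghat.1}--\ref{lem:ghat.4}, so no new estimate is needed.
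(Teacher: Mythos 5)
Your proposal is correct and follows exactly the same route as the paper: the paper's proof of this lemma is simply the one-line observation that the statement follows from $G\le C_A\widehat G$ (Lemma~\ref{lem:green.bound0}) together with Lemmas~\ref{lem:ghat.1}--\ref{lem:ghat.4}. The extra bookkeeping you supply (checking that the four regions in~\eqref{eq:ghat} cover $K$, passing to nonincreasing envelopes to get a single monotone $\varepsilon$, and verifying finiteness of $U_\beta$ for small $d(y)$ via the locally integrable singularity of $\widehat G$) is exactly what the paper leaves implicit, and you fill it in correctly.
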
    
\begin{proof}
    The statement follows from the bound $G(x,y)\le C\widehat G(x,y)$ and 
    Lemma~\ref{lem:ghat.1}--Lemma~\ref{lem:ghat.4}. 
\end{proof}

We will also need estimates for the derivatives $U_\beta$. 
\begin{lemma}\label{lem:drift.main.term}
Let the assumption~\ref{ass-g} hold. 
Then,  there exists a function $\varepsilon(R)\to 0$ such that for $y\in K: d(y)>R$ 
    \begin{align}
        \left|U_{\beta} (y)\right|&\le \varepsilon(R) u(y)\label{eq:U.zero}\\
        \left|\frac{\partial U_{\beta}(y)}{\partial y_i}\right|&\le 
        \varepsilon(R)
        \frac{u(y)}{d(y)}+Cd(y)\beta(y)\label{eq:U.first}\\
         \left|\frac{\partial^2 U_{\beta}(y)}{\partial y_i y_j}\right|   
                                                               &\le \varepsilon(R)                                                                 \frac{u(y)}{d(y)^{2}}+C\beta(y)\label{eq:U.second}\\
                                                               [(U_\beta)_{y_iy_j}]_{\theta, B(y,\frac{1}{3}d(y))} 
                                                               &\le \varepsilon(R)\frac{u(y)}{d(y)^{2+\theta}}
        +C \frac{\beta(y)}{d(y)^{\theta}},
        \label{eq:U.third}
    \end{align}
    where $\theta\in (0,1]$. 
\end{lemma}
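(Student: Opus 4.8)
The plan is to establish the four estimates in Lemma~\ref{lem:drift.main.term} by differentiating under the integral sign, using the fact that the Green function $G(x,y)$ is harmonic in $y$ away from $x$ together with interior gradient estimates for harmonic functions. The starting point is the decomposition $U_\beta(y) = \int_K G(x,y)\beta(x)\,dx$, which we split into a ``far'' part $U_\beta^{\mathrm{far}}(y) = \int_{|x-y|\ge d(y)/3} G(x,y)\beta(x)\,dx$ and a ``near'' part $U_\beta^{\mathrm{near}}(y) = \int_{|x-y|< d(y)/3} G(x,y)\beta(x)\,dx$. The first estimate \eqref{eq:U.zero} is just a restatement of \eqref{eq:ubeta.bound0} from Lemma~\ref{lem:ubeta.bound0}, so nothing new is needed there.

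For the far part, since $x\mapsto G(\cdot,y)$ (as a function of $y$) is harmonic on the ball $B(y, d(y)/3)$ whenever $|x-y|\ge d(y)/3$, standard interior estimates for harmonic functions give $|\partial^\alpha_y G(x,y)| \le C d(y)^{-|\alpha|}\sup_{B(y,d(y)/3)} G(\cdot, x)$, and a Harnack-type comparison on that ball lets us replace the supremum by $C G(x,y')$ for a representative $y'$; integrating $\beta(x)$ against this bound reduces, after paying a factor $d(y)^{-|\alpha|}$, to the same integrals $I_1+I_2+I_3+I_4$ already controlled in Lemmas~\ref{lem:ghat.1}--\ref{lem:ghat.4} (the region $|x-y|<d(y)/2$ contributes only through its overlap with $|x-y|\ge d(y)/3$, where the singularity is harmless). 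This yields the leading terms $\varepsilon(R)u(y)/d(y)^{|\alpha|}$ in each of \eqref{eq:U.first}--\eqref{eq:U.third}. For the Hölder seminorm \eqref{eq:U.third} one uses the interior Schauder-type estimate $[\partial^2_y G(x,\cdot)]_{\theta, B(y,d(y)/3)}\le C d(y)^{-2-\theta}\sup G$ on the slightly smaller ball, which is why the radius $\tfrac13 d(y)$ (rather than $\tfrac12$) appears.

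For the near part, the key point is \eqref{eq:laplacian.drift}: $\Delta_y U_\beta(y) = -\beta(y)$, so $U_\beta^{\mathrm{near}}$ is (up to the harmonic far part) a Newtonian potential of the smooth-ish density $\beta$ on a ball of radius comparable to $d(y)$. Since $\beta$ is, by \eqref{defn.beta} and the slow variation of $\gamma$, roughly constant ($\asymp \beta(y)$) and Lipschitz with constant $O(\beta(y)/d(y))$ on $B(y,d(y)/3)$, classical potential-theoretic estimates for the Newtonian potential of a bounded Hölder density on a ball of radius $\rho \asymp d(y)$ give: the potential itself is $O(\rho^2\beta(y)) = O(d(y)^2\beta(y))$ — but this is dominated by $\varepsilon(R)u(y)$ via \eqref{eq:harmonic.boundary.lower} and the integrability of $\gamma$, so it is absorbed; its gradient is $O(\rho\beta(y)) = O(d(y)\beta(y))$, giving the second term in \eqref{eq:U.first}; its Hessian is $O(\beta(y)) + O(\text{osc of }\beta)$, giving the second term in \eqref{eq:U.second}; and the Hölder $\theta$-seminorm of its Hessian is controlled by the $\theta$-Hölder seminorm of $\beta$ on the ball, which is $O(\beta(y)/d(y)^\theta)$ by slow variation, giving the second term in \eqref{eq:U.third}.

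\textbf{The main obstacle} I expect is the near part, specifically making \eqref{eq:U.second} and \eqref{eq:U.third} precise: second derivatives of a Newtonian potential are only bounded (let alone Hölder) when the density itself has some Hölder regularity, so one must verify carefully that $\beta(x) = |x|^{p-1}\gamma(d(x))/d(x)$ genuinely has a $\theta$-Hölder seminorm of order $\beta(y)/d(y)^\theta$ on $B(y,d(y)/3)$ — this uses that $|x|^{p-1}$ and $d(x)$ are smooth with controlled derivatives away from $\partial K$, that $\gamma$ is differentiable with $|\gamma'(t)| = o(\gamma(t)/t)$ (slow variation), and a short computation bounding the composition. A secondary technical point is the Harnack comparison on $B(y, d(y)/3)$ for the Green function $G(\cdot, x)$ with pole outside the ball: since this ball lies at distance $\gtrsim d(y)$ from $\partial K$ and has radius $\asymp d(y)$, interior Harnack applies with a universal constant, but one should state this cleanly. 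Everything else is bookkeeping on top of Lemmas~\ref{lem:ghat.1}--\ref{lem:ghat.4}.
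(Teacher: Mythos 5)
Your proposal is correct in substance, but it effectively re-derives the tool that the paper simply cites. The paper's proof of \eqref{eq:U.first}--\eqref{eq:U.third} is very short: since $\Delta U_\beta = -\beta$ on $K$, it applies the interior Schauder estimate (Theorem~4.6 of Gilbarg--Trudinger~\cite{GT2001}) on the concentric balls $B_{d(y)/3}(y)\subset B_{2d(y)/3}(y)$, which directly gives
\[
d(y)\,|\nabla U_\beta(y)| + d(y)^2\,|D^2 U_\beta(y)| + d(y)^{2+\theta}\,[D^2 U_\beta]_{\theta,B_{d(y)/3}(y)}
\le C\Bigl(\sup_{B_{2d(y)/3}(y)} U_\beta + d(y)^{2+\theta}[\beta]_{\theta, B_{2d(y)/3}(y)}\Bigr);
\]
the $\sup U_\beta$ term is absorbed via Lemma~\ref{lem:ubeta.bound0} (plus Harnack for $u$ on the ball), and the $[\beta]_\theta$ term is computed explicitly, which is precisely the estimate $[\beta]_{\theta,B(y,d(y)/3)}\le C\beta(y)/d(y)^\theta$ that you flag as the main obstacle. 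Your far/near decomposition of $U_\beta$ into a harmonic part plus a Newtonian potential of $\beta$, with interior derivative estimates on the far part and classical potential estimates on the near part, is essentially the proof of that Schauder estimate unrolled; it is a legitimate self-contained alternative, but it buys you nothing over citing \cite[Thm.~4.6]{GT2001} and it introduces a few extra technical points you would have to police (differentiation through the $y$-dependent cut-off $\{|x-y|\ge d(y)/3\}$, which should instead be set up with a decomposition centred at a fixed $y_0$; and a Harnack comparison to replace $\sup_{B(y,d(y)/3)}G(x,\cdot)$ by $G(x,y)$ before integrating against $\beta$, which you mention but do not write out). You correctly identify both the roles of \eqref{eq:laplacian.drift} and of the Hölder estimate for $\beta$, and your observation that second derivatives of a Newtonian potential require Hölder control of the density, not just a sup bound, is exactly the right concern; the paper's Hölder computation for $\beta$ uses the mean value theorem together with $|d(x)-d(z)|\le|x-z|$ and the differentiability of $\gamma$, which matches the ingredients you list.
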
    
\begin{proof}
    The first bound~\eqref{eq:U.zero}  is simply~\eqref{eq:ubeta.bound0}.

    For other bounds we make use of Theorem~4.6 in~\cite{GT2001}. 
        We apply this theorem to the concentric balls $B_r(y)$ and $B_{2r}(y)$, where $r=\frac{1}{3}d(y)$ to obtain that 
    \begin{align*}
        r(U_\beta(y))_{y_i} + r^2 (U_\beta)_{y_iy_j}(y)&\le C(d,\theta)
        \left(\sup_{x:x\in B(y,\frac{2}{3}d(y))} U_\beta(x)+ r^{2+\theta}[\beta]_{\theta, B(y,2r)}\right) \\ 
        r^{2+\theta}[(U_\beta)_{y_iy_j}]_{\theta, B(y,r)}&\le C(d,\theta)\left(\sup_{x:x\in B(y,\frac{2}{3}d(y))} U_\beta(x)+ r^{2+\theta}[\beta]_{\theta , B(y,2r)}   \right). 
    \end{align*}
    It is known that  the distance $d(x)$ to the boundary of $K$ is 
     uniformly Lipschitz, see e.g.~\cite[Section 14.6]{GT2001}, 
    \[
        |d(x)-d(z)|\le |x-z|,\quad  x,z \in K. 
    \]
    We have, for $x,z\in B\left(y,\frac{1}{3}d(y)\right)$,
    \begin{align*}
        \frac{|\beta(x)-\beta(z)|}{|x-z|^\theta}
        &\le 
        \frac{||x|^{p-1}-|z|^{p-1}|}{|x-z|^\theta}
        \frac{\gamma(d(z))}{d(z)}
        +|z|^{p-1} \frac{|\gamma(d(x))/d(x)-\gamma(d(z))/d(z)}{|x-z|^\theta}  \\
        &\le 
        C |y|^{p-2}\frac{\gamma(d(y))}{d(y)} 
        d(y)^{1-\theta}
        +C|y|^{p-1}\frac{\gamma(d(y))}{d(y)^2}d(y)^{1-\theta}\\
        &\le C\frac{\beta (y)}{d(y)^\theta}, 
\end{align*}    
where we used the mean value theorem and the assumption that $\gamma$ is differentiable. 

\end{proof}
 
\begin{lemma}\label{lem:u-beta-diff}
  Assume that~\eqref{eq:harmonic.boundary} holds. 
  For any $\varepsilon>0$ there exists $R>0$ such that for 
    $x\in K$ with  $d(x)>R$
\begin{equation}
    \label{diff-bound1-beta}
    |U_\beta(x+y)-U_\beta(x)|\le  \varepsilon |y||x|^{p-1}, 
  \end{equation}
for $|y|\le |x|/2$.   
 \end{lemma}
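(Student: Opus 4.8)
The plan is to follow the two–case scheme used for the analogous bound~\eqref{diff-bound1} for $u$, replacing the pointwise estimate $u(z)\le C|z|^{p-1}d(z)$ — which has no counterpart for $U_\beta$, since $U_\beta/u$ blows up (logarithmically) as $d(\cdot)\to0$ — by the gradient estimate of Lemma~\ref{lem:drift.main.term} when $|y|$ is small, and by the region–by–region potential estimates behind Lemmas~\ref{lem:ghat.1}--\ref{lem:ghat.4} when $|y|$ is large; throughout, $U_\beta$ is set to $0$ off $K$. If $|y|\le d(x)/2$, the segment $[x,x+y]$ stays in $K$ with $d(x+ty)\ge d(x)/2>R/2$ and $|x+ty|\asymp|x|$ for all $t$, so I would integrate $\nabla U_\beta$ along it and insert~\eqref{eq:U.first} together with $u(z)/d(z)\le C|z|^{p-1}$ and $d(z)\beta(z)=|z|^{p-1}\gamma(d(z))$; using that $\gamma$ decreases this gives $|\nabla U_\beta(x+ty)|\le C|x|^{p-1}(\varepsilon(R/2)+\gamma(R/2))$, hence $|U_\beta(x+y)-U_\beta(x)|\le C(\varepsilon(R/2)+\gamma(R/2))|y||x|^{p-1}\le\varepsilon|y||x|^{p-1}$ once $R$ is large. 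This case is routine.

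In the complementary regime $d(x)/2<|y|\le|x|/2$ one has $d(x)<2|y|$, $|y|>R/2$, $|x+y|\asymp|x|$, and $d(x+y)\le d(x)+|y|\le3|y|$ (Lipschitz continuity of $d$) whenever $x+y\in K$. I would bound $|U_\beta(x+y)-U_\beta(x)|\le U_\beta(x)+U_\beta(x+y)$. The term $U_\beta(x)$ is harmless: Lemma~\ref{lem:ubeta.bound0} and~\eqref{eq:harmonic.boundary} give $U_\beta(x)\le\varepsilon(R)u(x)\le C\varepsilon(R)|x|^{p-1}d(x)\le2C\varepsilon(R)|x|^{p-1}|y|$. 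The real work is in $U_\beta(x+y)$ (it is $0$ unless $x+y\in K$): since $d(x+y)$ need not be large, Lemma~\ref{lem:ubeta.bound0} cannot be quoted as stated, so I would instead rerun the four estimates of Lemmas~\ref{lem:ghat.1}--\ref{lem:ghat.4} at the centre $x+y$. The first two already only use $|x+y|>R/2$, and inspecting the proofs of the last two — stopping before their concluding "$d(\cdot)>R$" step — yields $U_\beta(x+y)\le Cu(x+y)\bigl(\varepsilon_1(|x+y|)+\gamma(d(x+y))+\int_{d(x+y)}^{\infty}\gamma(w)w^{-1}\,dw\bigr)$ with $\varepsilon_1(R)\to0$.

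With $u(x+y)\le C|x|^{p-1}d(x+y)$ and $t:=d(x+y)\le3|y|$, it then suffices to show $\sup_{0<t\le3|y|}t\bigl(\varepsilon_1(|x+y|)+\gamma(t)+\int_t^{\infty}\gamma(w)w^{-1}\,dw\bigr)\le c\,\varepsilon|y|$ once $R$ is large. For $t$ bounded the left side is $O(1)=o(|y|)$ because $|y|>R/2$; for $1\le t\le3|y|$ I would use the slow variation of $\gamma$ and the convergence $\int_1^{\infty}\gamma(w)w^{-1}\,dw<\infty$ from~\eqref{eq:gamma.integral.finite.3} — for instance $t\int_t^{\infty}\gamma(w)w^{-1}\,dw\le t\gamma(t)\log(3|y|/t)+3|y|\int_{3|y|}^{\infty}\gamma(w)w^{-1}\,dw\le C|y|\bigl(\gamma(R/2)+\int_{R/2}^{\infty}\gamma(w)w^{-1}\,dw\bigr)=o(|y|)$, and similarly for the other two terms. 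Combining everything gives $U_\beta(x+y)\le(\varepsilon/2)|y||x|^{p-1}$, which completes the proof.

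I expect the last step — controlling $U_\beta(x+y)$ when $x+y$ lies at a moderate distance from $\partial K$ — to be the main obstacle, since there $U_\beta$ is genuinely of larger order than $u$. The reason it nonetheless works is that moving from $x$ into such a region forces $|y|\gtrsim d(x)>R$, so the "bad", uniformly bounded, contributions are negligible against the target $\varepsilon|y||x|^{p-1}\gtrsim\varepsilon R|x|^{p-1}$; making this trade-off precise (via Potter-type bounds on the slowly varying $\gamma$) is the one genuinely delicate computation.
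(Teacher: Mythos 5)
Your proposal is essentially correct and is a proper working-out of the paper's extremely terse proof, which merely says ``the proof follows the same arguments as~\cite[Lemma~2.3]{DW19}; the only change is that we use~\eqref{eq:U.first} to estimate the first derivative.'' Your Case~1 ($|y|\le d(x)/2$) is exactly what that hint means: along the segment $d(x+ty)\ge d(x)/2>R/2$, so~\eqref{eq:U.first} gives $|\nabla U_\beta|\le C\bigl(\varepsilon(R/2)+\gamma(R/2)\bigr)|x|^{p-1}$, and integrating produces the bound. Your Case~2 ($d(x)/2<|y|\le|x|/2$) is where you have spotted a genuine gap in the one-line reduction: the natural analogue of the step ``$u(z)\le C|z|^{p-1}d(z)$'' for $U_\beta$ would be Lemma~\ref{lem:ubeta.bound0}, but that estimate is only stated for $d(z)>R$, and $d(x+y)$ can be arbitrarily small here. (Even the blanket assertion in Lemma~\ref{lem:ubeta.bound0} that $\varepsilon(R)$ is bounded does not rescue the triangle-inequality step, since a bounded constant cannot be driven below the prescribed $\varepsilon$.) Your fix --- re-extracting the four $\widehat G$-estimates of Lemmas~\ref{lem:ghat.1}--\ref{lem:ghat.4} at the centre $x+y$ without the final ``$d>R$'' specialization, then using $u(x+y)\le C|x|^{p-1}d(x+y)$, $d(x+y)\le 3|y|$, and $|y|>R/2$ to absorb the $O\bigl(\log(1/d(x+y))\bigr)$ factor --- is the right idea and does yield the stated bound.

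One computational slip: in the final supremum over $1\le t\le3|y|$ you write $t\gamma(t)\log(3|y|/t)\le C|y|\gamma(R/2)$, but $\gamma(t)\le\gamma(R/2)$ is false for $1\le t<R/2$ (recall $\gamma$ is decreasing, and you have not yet ensured $t\ge R/2$). The argument is easily repaired by the two-stage optimization you already use elsewhere: first fix a threshold $T_0$ so that $\gamma(T_0)$ and $\int_{T_0}^\infty\gamma(w)w^{-1}\,dw$ are $\le\varepsilon$; for $t\ge T_0$ use $t\log(3|y|/t)\le C|y|$ and $\gamma(t)\le\gamma(T_0)$; for $0<t\le T_0$ observe that $t\bigl(\gamma(t)+\int_t^\infty\gamma(w)w^{-1}\,dw\bigr)$ is bounded by a constant $M(T_0)$ (it tends to $0$ as $t\downarrow0$ since $t\log(1/t)\to0$), and then choose $R$ so large that $M(T_0)\le\varepsilon R/2\le\varepsilon|y|$. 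This removes the defect and the rest of the proof stands.
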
  
\begin{proof} 
    The proof follows the same arguments as the proof of~\cite[Lemma~2.3]{DW19}
The only change is that we use~\eqref{eq:U.first} to estimate the first derivative.  
\end{proof}
Put 
\[
    f_\beta(x):=\e[U_{\beta} (x+X)] - U_{\beta}(x)
\]
\begin{lemma}\label{lem:supermartingale.drift}
    For any $\varepsilon>0$ there exists $R>0$ 
    such that for 
    $x\in K$ with  $d(x)>R$ the following bound holds 
    \[
        \left|f_\beta(x)+\frac{1}{2}\beta(x)\right| 
        \le \varepsilon \beta(x).
    \]
\end{lemma}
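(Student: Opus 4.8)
The plan is to run the same Taylor-expansion argument that was used in the proof of Lemma~\ref{lem:bound.f}, but now applied to the function $U=U_\beta$ instead of $U=u$, and to exploit the crucial identity $\Delta U_\beta(y)=-\beta(y)$ from~\eqref{eq:laplacian.drift}. Recall that we have already recorded in Lemma~\ref{lem:drift.main.term} the estimates~\eqref{eq:U.zero}--\eqref{eq:U.third} for $U_\beta$ and its derivatives up to second order (plus the Hölder seminorm of the second derivatives), and in Lemma~\ref{lem:u-beta-diff} the increment bound~\eqref{diff-bound1-beta}. These play exactly the roles that Lemma~\ref{lem:harnack}, \eqref{eq:u.bound.main}, \eqref{eq:harmonic.boundary} and Lemma~\ref{lem:u-diff} played in the proof of Lemma~\ref{lem:bound.f}.

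The key steps, in order, are as follows. First, apply the stochastic Taylor inequality~\eqref{eq:taylor.stochastic} with $U=U_\beta$ and $\theta=1$; since $\Delta U_\beta(x)=-\beta(x)$, the middle term $\tfrac12\Delta U_\beta(x)$ is precisely $-\tfrac12\beta(x)$, so the left-hand side becomes $\bigl|f_\beta(x)+\tfrac12\beta(x)\bigr|$. Second, bound the remainder term $R_{2,1}(x)\e[|X|^3;|X|\le d(x)/2]$: by~\eqref{eq:U.third} the Hölder seminorm $R_{2,1}(x)=\sup_{i,j}[(U_\beta)_{x_ix_j}]_{1,B(x,d(x)/2)}$ is at most $\varepsilon(R)u(x)/d(x)^3 + C\beta(x)/d(x)$, and by~\eqref{eq:third.moment} we have $\e[|X|^3;|X|\le d(x)/2]=o(1)\,d(x)\gamma(d(x))$; multiplying and using $u(x)/d(x)^2 \le C|x|^{p-1}/d(x)$ together with the definition $\beta(x)=|x|^{p-1}\gamma(d(x))/d(x)$ (so that $u(x)\gamma(d(x))/d(x)^3\le C\beta(x)/d(x)$) shows this contribution is $o(1)\beta(x)$, and $C\beta(x)\gamma(d(x))=o(\beta(x))$ since $\gamma\to 0$. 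Third, bound the two ``large jump'' terms: the term $\bigl|\e[(\nabla U_\beta(x)\cdot X + \tfrac12\sum_{i,j}(U_\beta)_{x_ix_j}(x)X_iX_j){\rm 1}(|X|>d(x)/2)]\bigr|$ is handled using~\eqref{eq:U.first} and~\eqref{eq:U.second}, giving a bound of the form $\varepsilon(R)\bigl(\tfrac{u(x)}{d(x)}\e[|X|;|X|>d(x)/2] + \tfrac{u(x)}{d(x)^2}\e[|X|^2;|X|>d(x)/2]\bigr) + C d(x)\beta(x)\e[|X|;|X|>d(x)/2]+C\beta(x)\e[|X|^2;|X|>d(x)/2]$; and the term $\bigl|\e[U_\beta(x+X)-U_\beta(x);|X|>d(x)/2]\bigr|$ is handled by splitting at $|X|\le |x|/2$ (where~\eqref{diff-bound1-beta} gives $\le \varepsilon|X||x|^{p-1}$) and $|X|>|x|/2$ (where $|U_\beta(x+X)-U_\beta(x)|\le U_\beta(x+X)+U_\beta(x)\le \varepsilon(R)(u(x+X)+u(x))\le C\varepsilon(R)(|x|^p+|X|^p)$ by~\eqref{eq:U.zero} and~\eqref{eq:u.bound.main}). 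Fourth, collect all contributions: every term is either $\varepsilon(R)$ times something bounded by $C\beta(x)$, or $o(1)\beta(x)$ as $d(x)\to\infty$; using $d(x)\le|x|$, Lemma~\ref{lem:construction.gamma} (to absorb the moment tails $\e[|X|^p;|X|>|x|/2]=o(\gamma(|x|)|x|^{p-2})$), the slow variation of $\gamma$, and the Markov inequality to compare lower-order moment tails with $\gamma(d(x))$, one concludes that the whole right-hand side is at most $\varepsilon\beta(x)$ provided $d(x)>R$ with $R$ large enough.

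The main obstacle is bookkeeping rather than a genuine new difficulty: one must check that every error term produced by the derivative bounds of Lemma~\ref{lem:drift.main.term} — which each carry an $\varepsilon(R)$ factor in front of a $u(x)/d(x)^k$-type expression \emph{plus} an honest $\beta$-type term with a fixed constant $C$ — combines, after multiplication by the appropriate moment tail and after using $\beta(x)=|x|^{p-1}\gamma(d(x))/d(x)$ and $u(x)\le C|x|^{p-1}d(x)$, into something of the form (small constant)$\times\beta(x)$. The delicate point is that the terms with a \emph{fixed} constant $C$ in front (coming from the $+Cd(y)\beta(y)$, $+C\beta(y)$, $+C\beta(y)/d(y)^\theta$ tails in~\eqref{eq:U.first}--\eqref{eq:U.third}) must each pick up an additional vanishing factor — either a moment tail $\e[|X|;|X|>d(x)/2]\to 0$, or $\e[|X|^2;|X|>d(x)/2]\to 0$, or the factor $\gamma(d(x))\to 0$ from~\eqref{eq:third.moment}, or $\e[|X|^3;|X|\le d(x)/2]/d(x)=o(\gamma(d(x)))$ — so that they too become $o(\beta(x))$. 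Once this accounting is done uniformly, choosing $R$ large completes the proof.
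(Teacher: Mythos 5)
Your proposal is correct and follows essentially the same route as the paper: apply the stochastic Taylor inequality~\eqref{eq:taylor.stochastic} with $U=U_\beta$, use $\Delta U_\beta=-\beta$ to produce the $-\tfrac12\beta(x)$ term, control the remainder via~\eqref{eq:U.third} together with~\eqref{eq:third.moment}, control the large-jump terms via~\eqref{eq:U.first}, \eqref{eq:U.second}, \eqref{eq:U.zero} and~\eqref{diff-bound1-beta}, and absorb everything into $\varepsilon\beta(x)$ using Lemma~\ref{lem:construction.gamma}, the fact that $\gamma\to 0$, and the bound $u(x)\le C|x|^{p-1}d(x)$. The one small inaccuracy is that on $\{|X|>|x|/2\}$ you cannot invoke $U_\beta(x+X)\le\varepsilon(R)u(x+X)$ since $d(x+X)>R$ is not guaranteed, but the cruder bound $U_\beta(x+X)\le Cu(x+X)\le C|X|^p$ already yields a contribution $\le C\,\e[|X|^p;|X|>|x|/2]=o(\beta(x))$ by Lemma~\ref{lem:construction.gamma}, so the conclusion is unaffected.
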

\begin{proof}
    The proof goes exactly as in Lemma~\ref{lem:bound.f}. 
    We will apply~\eqref{eq:taylor.stochastic} with $U=U_\beta$. 
    The remainder $R_{2,\theta}(x)$ can be estimated using the  the inequality~\eqref{eq:U.third}, 
      \[
          R_{2,\theta}(x)=
          C_\alpha
          \left(
              \varepsilon \frac{u(x)}{d(x)^{2+\theta}}+\frac{\beta(x)}{d(x)^{\theta}}
          \right).
\] Also, using Lemma~\ref{lem:ubeta.bound0} and~\eqref{diff-bound1-beta}, 
    we can estimate 
    \begin{align*}
      &  \left|\e \left(U_\beta(x+X)-U_\beta(x)\right) ;|X|>d(x)) \right|\\ 
      &\hspace{1cm}\le 
      C\varepsilon(R)\e[|X|^p;|X|>|x|/2)]
      +C\varepsilon(R)|x|^{p-1}\e[|X|;|X|>d(x)/2],
    \end{align*}
    where $\varepsilon(R)\to 0$.
Then, applying~\eqref{eq:taylor.stochastic} with $U=U_\beta$ and  
using the fact that  $\Delta U_\beta(x)=-\beta(X)$ we obtain, 
\begin{align*}
        \left|f_\beta(x)+\frac{1}{2}\beta(x)\right|
             &\le \left|\e\left[\left(\nabla U_\beta(x)\cdot X +\frac{1}{2}\sum_{i,j}\frac{\partial^2 U_\beta}{\partial x_i\partial x_j}X_iX_j\right){\rm 1}(|X|>d(x)/2)\right]\right|\\
             &\hspace{1cm}+CR_{2,\theta}(x)\e \left[|X|^{2+\theta};|X|\le d (x)/2\right]\\
             &+C\varepsilon(R)\e[|X|^p;|X|>|x|/2]
             +C\varepsilon(R)|x|^{p-1}\e[|X|;|X|>d(x)/2]. 
    \end{align*}
    The partial derivatives of the function $U_\beta$
    in the first term can be  estimated via Lemma~\ref{lem:drift.main.term}, which results in the following estimate 
    \begin{align*}
\left|\e\left[\nabla U_\beta(x)\cdot X ;|X|>d(x)/2\right]\right|\le
C\left(\varepsilon(R)\frac{u(x)}{d(x)}+C\beta(x)d(x)\right)\e[|X|;|X|>d(x)/2]. 
    \end{align*}   
    Using~\eqref{eq:gamma.integral.finite} with $p=1$ we obtain 
    \[
\left|\e\left[\nabla U_\beta(x)\cdot X ;|X|>d(x)/2\right]\right|
\le C  \varepsilon(R)\beta(x).
    \]
We can estimate the terms with the second derivative as follows.     
\begin{align*}
&\frac{1}{2}\left|\e\left[\sum_{i,j}\frac{\partial^2 u(x)}{\partial x_i\partial x_j}X_iX_j;|X|>d(x)/2\right]\right|\\
&\hspace{1cm}\le 
C\left(
    \varepsilon(R)\frac{u(x)}{d(x)^2}+C \beta(x)
\right)
\e[|X|^2;|X|>d(x)/2]. 
\end{align*}    
Then, 
\begin{align*}
 \left|f_\beta(x)+\frac{1}{2}\beta(x)\right|
        &\le C\biggl(
            \varepsilon(R)\beta(x)
            +\left(\varepsilon(R)\frac{u(x)}{d(x)^2}+C \beta(x)\right)\frac{\gamma(d(x))}{d(x)}
          \\
            &\hspace{1cm}
            +R_{2,\theta}(x)\e \left[|X|^{2+\theta};|X|\le d (x)/2\right]\\
            &\hspace{1cm}+\varepsilon(R)\e[|X|^p;|X|>|x|/2]
            +\varepsilon(R)|x|^{p-1}\e[|X|;|X|>d(x)/2]
      \biggr).
    \end{align*}
    Using the fact that  $\varepsilon(R)\to 0$ as $R\to \infty$ and 
 Lemma~\ref{lem:construction.gamma} we arrive at the conclusion.  
\end{proof}    

\begin{lemma}
\label{lem:submart}
For every $c\le 1/2$ there exists $R>0$ such that the sequence
\begin{align}
\label{eq:def.Y}
\nonumber
Y_n^{(c)}
&:=\left(u(x+Rx_0+S(n))+U_\beta(x+Rx_0+S(n))\right)
{\rm 1}\{\tau_x>n\}\\
&\hspace{3cm}
+c\sum_{k=0}^{n-1}\beta(x+Rx_0+S(k)){\rm 1}\{\tau_x>k\}
\end{align}
is a supermartingale.
\end{lemma}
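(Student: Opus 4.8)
The plan is to verify the supermartingale inequality $\mathbf E[Y_{n+1}^{(c)}\mid\mathcal F_n]\le Y_n^{(c)}$ directly, with $\mathcal F_n=\sigma(X(1),\dots,X(n))$. Set $W:=u+U_\beta$; by construction $W\ge 0$ on $\mathbf R^d$ and $W\equiv 0$ outside $K$. Each $Y_n^{(c)}$ is nonnegative and integrable: on $\{\tau_x>k\}$ one has $x+S(k)\in K$, hence $d(x+Rx_0+S(k))\ge\varepsilon_0 R$ by the geometric fact recorded below, so by \eqref{defn.beta}, \eqref{eq:u.bound.main} and Lemma~\ref{lem:ubeta.bound0} the summands of \eqref{eq:def.Y} are bounded on $\{\tau_x>k\}$ by a constant times $1+|S(k)|^p$, which is integrable under Assumption~\ref{ass-m}. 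Writing out the sum in \eqref{eq:def.Y} and using that the terms with index $k\le n$ are $\mathcal F_n$-measurable, one gets that $\mathbf E[Y_{n+1}^{(c)}\mid\mathcal F_n]-Y_n^{(c)}$ vanishes on $\{\tau_x\le n\}$, while on $\{\tau_x>n\}$, putting $y:=x+S(n)\in K$ and $z:=y+Rx_0$, it equals
\[
\mathbf E_X\bigl[W(z+X)\mathbf 1\{y+X\in K\}\bigr]-W(z)+c\,\beta(z),
\]
where $X$ denotes an independent copy of $X(1)$ (here we used that $\mathbf 1\{\tau_x>n+1\}=\mathbf 1\{x+S(n)+X(n+1)\in K\}$ on $\{\tau_x>n\}$).

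Two inputs finish the argument. First, the geometric input: since $K$ is star-like (Assumption~\ref{ass-g}), there is a constant $\varepsilon_0\in(0,1)$ such that $d(w+tx_0)\ge\varepsilon_0 t$ for every $w\in K$ and $t\ge 0$; applied with $w=y$, $t=R$ this gives $d(z)\ge\varepsilon_0 R$, which we can make as large as we please by choosing $R$ large. Second, the probabilistic input: since $W\ge 0$ we may simply discard the indicator,
\[
\mathbf E_X\bigl[W(z+X)\mathbf 1\{y+X\in K\}\bigr]\le\mathbf E_X\bigl[W(z+X)\bigr]=W(z)+f(z)+f_\beta(z),
\]
the last equality being nothing but the definition \eqref{eq:defn.f} of $f$ and the definition of $f_\beta$ (both finite). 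Hence, on $\{\tau_x>n\}$,
\[
\mathbf E[Y_{n+1}^{(c)}\mid\mathcal F_n]-Y_n^{(c)}\le f(z)+f_\beta(z)+c\,\beta(z).
\]
By Lemma~\ref{lem:bound.f} and Lemma~\ref{lem:supermartingale.drift}, for any $\varepsilon>0$ there is a threshold $R_*(\varepsilon)$ such that $|f(z)|\le\varepsilon\beta(z)$ and $|f_\beta(z)+\tfrac12\beta(z)|\le\varepsilon\beta(z)$ whenever $d(z)>R_*(\varepsilon)$, and then $f(z)+f_\beta(z)+c\,\beta(z)\le\bigl(c-\tfrac12+2\varepsilon\bigr)\beta(z)$. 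Fixing $\varepsilon>0$ small enough depending on $c$ (so that $2\varepsilon\le\tfrac12-c$), and then choosing $R$ so large that $\varepsilon_0 R$ exceeds $R_*(\varepsilon)$ and the threshold of Lemma~\ref{lem:ubeta.bound0} used above, the right-hand side is $\le 0$. Together with the vanishing of the difference on $\{\tau_x\le n\}$, this shows that $\{Y_n^{(c)}\}$ is a supermartingale.

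The point that needs care is the mismatch between the exit event $\{\tau_x>n+1\}=\{x+S(n+1)\in K\}$, which is governed by $x+S(n)$, and the argument $x+Rx_0+S(n)$ at which $W$ is evaluated — precisely why the shift $Rx_0$ is built into $Y_n^{(c)}$. It is resolved in two steps: nonnegativity of $W$ lets one drop the ``wrong'' indicator $\mathbf 1\{y+X\in K\}$ at no cost, so that only the one-sided drift estimates are needed; and star-likeness of $K$ gives $d(x+Rx_0+S(n))\ge\varepsilon_0 R$ on $\{\tau_x>n\}$, forcing every relevant evaluation point deep enough into $K$ for Lemmas~\ref{lem:bound.f}, \ref{lem:supermartingale.drift} and~\ref{lem:ubeta.bound0} to apply with the required precision. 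Beyond the bookkeeping above, I expect the only genuine work to be (i) the geometric inequality $d(w+tx_0)\ge\varepsilon_0 t$ for star-like Lipschitz cones, and (ii) the choice of $R$ large enough that all the $\varepsilon(R)$-type error terms are simultaneously small.
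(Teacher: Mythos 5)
Your argument is correct and follows the paper's proof step for step: drop the killing indicator using $u+U_\beta\ge 0$, write the conditional one-step drift as $f(z)+f_\beta(z)+c\,\beta(z)$ at $z=x+Rx_0+S(n)$, and invoke Lemmas~\ref{lem:bound.f} and~\ref{lem:supermartingale.drift} with $R$ taken large enough that the two $\varepsilon(R)$-terms are jointly dominated by $\tfrac12\beta(z)-c\,\beta(z)$. The only thing you add is to make explicit the geometric fact $\inf_{y\in K}d(y+Rx_0)\to\infty$ as $R\to\infty$, which the paper uses tacitly in order to apply those two lemmas uniformly in $y\in K$.
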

\begin{proof}
Set, for brevity,
$$
V_\beta(y):=u(Rx_0+y)+U_\beta(Rx_0+y).
$$
Then 
\begin{align*}
&\e[Y_n^{(c)}-Y_{n-1}^{(c)}|\mathcal{F}_{n-1}]\\
&\hspace{1cm}=\e[V_\beta(x+S(n)){\rm 1}\{\tau_x>n\}
-V_\beta(x+S(n-1)){\rm 1}\{\tau_x>n-1\}|\mathcal{F}_{n-1}]\\
&\hspace{2cm}+c\beta(x+Rx_0+S(n-1)){\rm 1}\{\tau_x>n-1\}.
\end{align*}
Since the functions $u$ and $U_\beta$ are nonnegative, we have the inequality
\begin{align*}
&\e[Y_n^{(c)}-Y_{n-1}^{(c)}|\mathcal{F}_{n-1}]\\
&\hspace{1cm}\le{\rm 1}\{\tau_x>n-1\}\e[V_\beta(x+S(n))
-V_\beta(x+S(n-1))|\mathcal{F}_{n-1}]\\
&\hspace{2cm}+c\beta(x+Rx_0+S(n-1)){\rm 1}\{\tau_x>n-1\}\\
&\hspace{1cm}=\left(f(x+Rx_0+S(n-1))+f_\beta(x+Rx_0+S(n-1))\right)
{\rm 1}\{\tau_x>n-1\}\\
&\hspace{2cm}+c\beta(x+Rx_0+S(n-1)){\rm 1}\{\tau_x>n-1\}.
\end{align*}
According to Lemma~\ref{lem:bound.f} and Lemma~\ref{lem:supermartingale.drift} there exists $R>0$ such that 
$$
|f(y+Rx_0)|\le\left(\frac{1}{8}-\frac{c}{4}\right)\beta(y+Rx_0)
$$
and
$$
\left|f_\beta(y+Rx_0)+\frac{1}{2}\beta(y+Rx_0)\right|
\le\left(\frac{1}{8}-\frac{c}{4}\right)\beta(y+Rx_0)
$$
for all $y\in K$. Therefore,
$$
\e[Y_n^{(c)}-Y_{n-1}^{(c)}|\mathcal{F}_{n-1}]
\le -\left(\frac{1}{4}-\frac{c}{2}\right)
\beta(x+Rx_0+S(n-1)){\rm 1}\{\tau_x>n-1\}.
$$
This completes the proof.
\end{proof}
\begin{lemma}
\label{lem:beta_bound}
If $R$ is large enough then 
\begin{equation}
\label{eq:beta_1} 
\e\left[\sum_{k=0}^{\tau_x-1}\beta(x+Rx_0+S(k))\right]
\le 3V_\beta(x).
\end{equation}
Furthermore, there exists $\varepsilon(R)\downarrow 0$ such that
\begin{equation}
\label{eq:beta_2} 
\e\left[\sum_{k=0}^{\tau_x-1}|f(x+Rx_0+S(k))|\right]
\le \varepsilon(R)u(x+Rx_0).
\end{equation}
\end{lemma}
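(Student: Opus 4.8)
The plan is to derive both estimates from the non-negative supermartingale of Lemma~\ref{lem:submart} together with the error bound of Lemma~\ref{lem:bound.f}; recall that $V_\beta(x)=u(x+Rx_0)+U_\beta(x+Rx_0)$. For \eqref{eq:beta_1} I fix $c=1/2$ (any $c\le 1/2$ works, and taking $c=1/3$ reproduces the constant $3$ exactly) and choose $R$ large enough that the sequence $\{Y_n^{(c)}\}$ from \eqref{eq:def.Y} is a supermartingale. Since $u$ and $U_\beta$ are non-negative, $Y_n^{(c)}\ge c\sum_{k=0}^{n-1}\beta(x+Rx_0+S(k)){\rm 1}\{\tau_x>k\}$, while $\tau_x\ge1$ gives $Y_0^{(c)}=u(x+Rx_0)+U_\beta(x+Rx_0)=V_\beta(x)$ deterministically. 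The supermartingale inequality then yields $\e[Y_n^{(c)}]\le\e[Y_0^{(c)}]=V_\beta(x)$ for all $n$, hence
\[
\e\Bigl[\sum_{k=0}^{n-1}\beta(x+Rx_0+S(k)){\rm 1}\{\tau_x>k\}\Bigr]\le c^{-1}V_\beta(x),
\]
and letting $n\to\infty$ by monotone convergence gives \eqref{eq:beta_1}.

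For \eqref{eq:beta_2} I use that, by Lemma~\ref{lem:bound.f}, $|f(z)|=o(\beta(z))$ as $d(z)\to\infty$: for every $\varepsilon>0$ there is $D_\varepsilon$ with $|f(z)|\le\varepsilon\beta(z)$ whenever $d(z)>D_\varepsilon$. Since $K$ is star-like (Assumption~\ref{ass-g}), $d(y+Rx_0)\to\infty$ uniformly in $y\in K$ as $R\to\infty$ (indeed $d(y+Rx_0)\ge\delta R$ for a constant $\delta>0$), so for all sufficiently large $R$ one has $|f(x+Rx_0+S(k))|\le\varepsilon(R)\,\beta(x+Rx_0+S(k))$ on $\{\tau_x>k\}$ with $\varepsilon(R)\downarrow0$. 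Summing over $k<\tau_x$, taking expectations and invoking \eqref{eq:beta_1},
\[
\e\Bigl[\sum_{k=0}^{\tau_x-1}|f(x+Rx_0+S(k))|\Bigr]\le\varepsilon(R)\,\e\Bigl[\sum_{k=0}^{\tau_x-1}\beta(x+Rx_0+S(k))\Bigr]\le3\varepsilon(R)\bigl(u(x+Rx_0)+U_\beta(x+Rx_0)\bigr).
\]
Finally, Lemma~\ref{lem:ubeta.bound0} gives $U_\beta(x+Rx_0)\le u(x+Rx_0)$ once $R$ is large, so the right-hand side is at most $6\varepsilon(R)u(x+Rx_0)$; relabelling $\varepsilon(R)$ yields \eqref{eq:beta_2}.

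The only step needing genuine care is the geometric input that $d(y+Rx_0)$ grows linearly in $R$ uniformly over $y\in K$, which is where the star-likeness of the cone enters; everything else is a routine combination of Lemmas~\ref{lem:submart}, \ref{lem:bound.f} and~\ref{lem:ubeta.bound0} with the supermartingale inequality and a monotone-convergence passage to the limit. One should also note that all expectations above are finite: on $Rx_0+K$ one has $u(z),U_\beta(z)\le C|z|^p$ and $\beta(z)\le C|z|^{p-1}$, and $\e[|X|^p]<\infty$ under Assumption~\ref{ass-m}, so $Y_n^{(c)}$ is integrable and the manipulations are legitimate.
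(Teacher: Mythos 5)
Your proof is correct and follows exactly the paper's route: apply the supermartingale inequality to $Y_n^{(c)}$ from Lemma~\ref{lem:submart} (with $c=1/3$, giving the constant $3$ directly) to get \eqref{eq:beta_1}, then combine $|f|=o(\beta)$ from Lemma~\ref{lem:bound.f}, the bound \eqref{eq:beta_1} and the estimate $U_\beta\le\varepsilon(R)u$ from \eqref{eq:U.zero} to get \eqref{eq:beta_2}. One small remark: the choice $c=1/2$ makes the drift coefficient $\tfrac14-\tfrac{c}{2}$ in the proof of Lemma~\ref{lem:submart} vanish, so $c$ should really be taken strictly below $1/2$; your fallback $c=1/3$ is the right choice.
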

\begin{proof}
Let $R$ be so large that $(Y_n^{(1/3)})_{n\ge 0}$ defined in ~\eqref{eq:def.Y} 
is a supermartingale.
Then, 
$$
E[Y_n^{(1/3)}]\le Y_0^{(1/3)}=V_\beta(x). 
$$
Since $u$ and $U_\beta$ are non-negative, 
$$
\frac{1}{3}\e\left[\sum_{k=0}^{n-1}\beta(x+Rx_0+S(k))
{\rm 1}\{\tau_x>k\}\right]
\le V_\beta(x).
$$
Letting here $n\to\infty$ and noting that 
$$
\sum_{k=0}^{\infty}\beta(x+Rx_0+S(k)){\rm 1}\{\tau_x>k\}
=\sum_{k=0}^{\tau_x-1}\beta(x+Rx_0+S(k))
$$
we obtain \eqref{eq:beta_1}. 

Estimate~\eqref{eq:beta_2} follows from \eqref{eq:beta_1}, \eqref{eq:U.zero} 
and from Lemma~\ref{lem:bound.f}. 
\end{proof}
\section{ Construction of the harmonic function     }
Consider the sequence 
$$
Z_n:=u(x+Rx_0+S(n\wedge\tau_x))
-\sum_{k=0}^{n\wedge\tau_x-1}f(x+Rx_0+S(k)),\quad n\ge0.
$$
Then
\begin{align*}
&\e[Z_n-Z_{n-1}|\mathcal{F}_{n-1}]\\
&\hspace{1cm}
=\e[(Z_n-Z_{n-1}){\rm 1}\{\tau_x>n-1\}|\mathcal{F}_{n-1}]\\
&\hspace{1cm}
={\rm 1}\{\tau_x>n-1\}
\e[(u(x+Rx_0+S(n))-u(x+Rx_0+S(n-1)))|\mathcal{F}_{n-1}]\\
&\hspace{3cm}-{\rm 1}\{\tau_x>n-1\}f(x+Rx_0+S(n-1)).
\end{align*}
Recalling that $f(y)=\e[u(y+X)]-u(y)$, we conclude that $Z_n$ is a martingale. 
By the optional stopping theorem applied to this martingale,
\begin{align*}
u(x+Rx_0)
&=\e[Z_0]=\e[Z_n]\\
&=\e\left[u(x+Rx_0+S(n))
-\sum_{k=0}^{n-1}f(x+Rx_0+S(k));\tau_x>n\right]\\
&\hspace{1cm}
+\e\left[u(x+Rx_0+S(\tau_x))
-\sum_{k=0}^{\tau_x-1}f(x+Rx_0+S(k));\tau_x\le n\right].
\end{align*}
Consequently,
\begin{align}
\label{eq:proof.1}
\nonumber
\e[u(x+Rx_0+S(n));\tau_x&>n]\\
\nonumber
=u(x+Rx_0)&-\e[u(x+Rx_0+S(\tau_x));\tau_x\le n]\\
\nonumber
&+\e\left[\sum_{k=0}^{\tau_x-1}f(x+Rx_0+S(k));\tau_x\le n\right]\\
&+\e\left[\sum_{k=0}^{n-1}f(x+Rx_0+S(k));\tau_x>n\right].
\end{align}
By the monotone convergence theorem,
\begin{equation}
\label{eq:proof.2}
\lim_{n\to\infty}\e[u(x+Rx_0+S(\tau_x));\tau_x\le n]
=\e[u(x+Rx_0+S(\tau_x))].
\end{equation}
The estimate \eqref{eq:beta_2} allows one to apply the dominated convergence theorem. As a result we have
\begin{equation}
\label{eq:proof.3}
\lim_{n\to\infty}
\e\left[\sum_{k=0}^{\tau_x-1}f(x+Rx_0+S(k));\tau_x\le n\right]
=\e\left[\sum_{k=0}^{\tau_x-1}f(x+Rx_0+S(k))\right]
\end{equation}
and 
\begin{align}
\label{eq:proof.4}
\nonumber
&\limsup_{n\to\infty}\left|\e\left[\sum_{k=0}^{n-1}f(x+Rx_0+S(k));\tau_x>n\right]\right|\\
&\hspace{2cm}\le \limsup_{n\to\infty}\e\left[\sum_{k=0}^{\tau_x-1}|f(x+Rx_0+S(k))|;\tau_x>n\right]=0.
\end{align}
Combining \eqref{eq:proof.1}---\eqref{eq:proof.4}, we conclude that 
\begin{align}
\label{eq:proof.5}
\nonumber
&\lim_{n\to\infty}\e[u(x+Rx_0+S(n));\tau_x>n]\\
&\hspace{5mm}=u(x+Rx_0)-\e[u(x+Rx_0+S(\tau_x))]
+\e\left[\sum_{k=0}^{\tau_x-1}f(x+Rx_0+S(k))\right].
\end{align}
Recalling that  $u(x)$ is nonnegative and taking into account \eqref{eq:beta_2}, we see that this limit is finite.

We now show that the sequence $\e[u(x+S(n));\tau_x>n]$ has the same limit. According to \eqref{diff-bound},
$$
|u(x+Rx_0+S(n))-u(x+S(n))|\le 
C\left(R|x+Rx_0+S(n)|^{p-1}+R^p\right).
$$
Therefore,
\begin{align}
\label{eq:proof.6}
\nonumber
&\Big|\e[u(x+Rx_0+S(n));\tau_x>n]-\e[u(x+S(n));\tau_x>n]\Big|\\
&\hspace{2cm}\le
CR\e[|x+Rx_0+S(n)|^{p-1};\tau_x>n]+CR^p\pr(\tau_x>n).
\end{align}
We next notice that \eqref{eq:beta_1} implies that 
$$
\varepsilon_n:=\e[\beta(x+Rx_0+S(n));\tau_x>n]\to0.
$$
Then there exists a sequence $a_n\uparrow\infty$ such that 
\begin{align}
\label{eq:proof.7}
\nonumber
&\e[|x+Rx_0+S(n)|^{p-1};d(x+Rx_0+S(n))\le a_n, \tau_x>n]\\
\nonumber
&\hspace{2cm}\le 
\frac{a_n}{\gamma(a_n)}\e[\beta(x+Rx_0+S(n));\tau_x>n]\\
&\hspace{2cm}=\frac{a_n}{\gamma(a_n)}\varepsilon_n\to0.
\end{align}
Moreover, using \eqref{eq:harmonic.boundary.lower} and \eqref{eq:proof.5}, we obtain 
\begin{align}
\label{eq:proof.8}
\nonumber
&\e[|x+Rx_0+S(n)|^{p-1};d(x+Rx_0+S(n))>a_n, \tau_x>n]\\
&\hspace{2cm}\le \frac{C}{a_n}\e[u(x+Rx_0+S(n));\tau_x>n]\to0.
\end{align}
Combining \eqref{eq:proof.6}---\eqref{eq:proof.8}
and using the finiteness of $\tau_x$, we conclude that 
$$
\Big|\e[u(x+Rx_0+S(n));\tau_x>n]-\e[u(x+S(n));\tau_x>n]\Big|\to0.
$$
From this and from \eqref{eq:proof.5} we get \eqref{eq:def.V}
with
\begin{equation}
\label{eq:defV}
V(x)=u(x+Rx_0)-\e[u(x+Rx_0+S(\tau_x))]
+\e\left[\sum_{k=0}^{\tau_x-1}f(x+Rx_0+S(k))\right].
\end{equation}
Due to \eqref{eq:beta_2}, 
$$
V(x)\le Cu(x+Rx_0)
$$
This upper bound allows one to repeat the arguments from Lemma~13
in \cite{DW15} and to conclude that $V$ is harmonic for $S(n)$ killed at leaving the cone $K$.
This completes the proof of the first statement  of Theorem~\ref{thm:C2}. 

\section{Upper bounds  for $\pr(\tau_x>n)$ in the case $p\ge 1$} 

We start with the following estimate for tails distribution of exit time $\tau_x^{bm}$ from $K$ of  Brownian motion. 
\begin{lemma}
    Assume that $p\ge 1$ and that  the cone $K$ is Lipschitz and starlike.
    Then there exists a constant $C$ such that 
    \begin{equation}\label{eq:bound.tail.tau}
        \pr(\tau_x^{bm }>t) \le C \frac{u(x)}{t^{p/2}},\quad x\in K. 
    \end{equation}
\end{lemma}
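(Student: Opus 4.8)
The plan is to reduce the bound \eqref{eq:bound.tail.tau} to a scaling argument plus a uniform-in-$x$ estimate valid at a fixed time, and to use the homogeneity $u(rx)=r^pu(x)$ together with Brownian scaling $\{B(s)\}_{s\le t}\overset{d}{=}\{\sqrt t\,B(s/t)\}_{s\le1}$. First I would note that, by these two scalings, for any $x\in K$ and $t>0$,
\[
\pr(\tau_x^{bm}>t)=\pr(\tau_{x/\sqrt t}^{bm}>1),\qquad
\frac{u(x)}{t^{p/2}}=u\!\left(\frac{x}{\sqrt t}\right),
\]
so it suffices to prove the single inequality
\[
\pr(\tau_y^{bm}>1)\le C\,u(y),\qquad y\in K.
\]
Thus the time variable disappears and we are left with a statement purely about Brownian motion started at an arbitrary point of a Lipschitz starlike cone and killed on leaving it.

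Next I would split according to the size of $y$. For $y$ bounded, say $|y|\le 1$, the function $y\mapsto \pr(\tau_y^{bm}>1)$ is itself a nonnegative function that is harmonic for the killed Brownian motion on $K$ and vanishes continuously on $\partial K$ (Lipschitz cones are regular for the Dirichlet problem), hence by the Boundary Harnack Principle for Lipschitz domains it is comparable to $u$ on the bounded region $K\cap\{|y|\le2\}$; in particular $\pr(\tau_y^{bm}>1)\le C u(y)$ there. For $|y|>1$ I would instead use a crude bound: since the cone is starlike, after time $1$ the Brownian path has typically moved by $O(1)$, and
\[
\pr(\tau_y^{bm}>1)\le \pr\!\left(\sup_{s\le1}|B(s)|\ge d(y)\right)+\pr(\tau_y^{bm}>1,\ \sup_{s\le1}|B(s)|<d(y)),
\]
where the second term is controlled because on that event $y+B(s)$ stays in $K$ automatically only if $d(y)$ is small — more usefully, one bounds $\pr(\tau_y^{bm}>1)$ by a Gaussian tail in $d(y)$ when $d(y)$ is large, and by the Boundary Harnack comparison (applied at scale $|y|$, using homogeneity to rescale back to the unit annulus) when $d(y)$ is comparable to $|y|$. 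In all cases Assumption~\ref{ass-g}, specifically the two-sided bound $u(y)\asymp |y|^{p-1}d(y)$, converts these geometric estimates into the claimed bound $\le Cu(y)$: a Gaussian factor $e^{-cd(y)^2}$ in particular is $\le C|y|^{p-1}d(y)$ once $|y|$ is large, which handles the far region, while the annulus estimate handles $d(y)\asymp|y|$.

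The main obstacle I anticipate is the uniformity of the constant $C$ over \emph{all} $x\in K$, including points arbitrarily close to the apex and to the lateral boundary simultaneously; this is exactly where one needs the quantitative harmonic-function estimates rather than soft compactness. The clean way to organise it is: (i) prove the comparison $\pr(\tau_y^{bm}>1)\le Cu(y)$ on the unit annulus $\{1\le|y|\le2\}\cap K$ by the Boundary Harnack Principle (valid in Lipschitz cones); (ii) for $|y|\ge2$, write $\pr(\tau_y^{bm}>1)\le \pr(\tau_y^{bm}>\tfrac14)$ and use the strong Markov property at the first time the path exits $B(y,|y|/4)$ together with the scaling and step (i); (iii) for $|y|\le1$ use step (i) and homogeneity in reverse (rescale $y$ up to the annulus, noting $\tau^{bm}$ only gets easier to survive at larger scales). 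Summing the pieces and invoking \eqref{eq:harmonic.boundary}–\eqref{eq:harmonic.boundary.lower} to absorb everything into $Cu(x)/t^{p/2}$ completes the argument.
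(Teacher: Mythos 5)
Your opening move---reducing \eqref{eq:bound.tail.tau} by Brownian and $u$-homogeneity scaling to the single inequality $\pr(\tau_y^{bm}>1)\le Cu(y)$, $y\in K$---is correct and is exactly the reduction implicit in the paper's proof. The trouble is in how you then try to establish this inequality uniformly over $K$, and there are two concrete gaps.

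First, the function $h(y):=\pr(\tau_y^{bm}>1)$ is not harmonic in $K$, only superharmonic: writing $h(y)=\int_K p_1^K(y,z)\,dz$ with $p_t^K$ the Dirichlet heat kernel, one has $\Delta h(y)=2\,\partial_t\pr(\tau_y^{bm}>t)\big|_{t=1}\le0$. The Boundary Harnack Principle you invoke in step (i) is a statement about nonnegative \emph{harmonic} functions, and the upper Carleson-type comparison $h\le Cu$ does not follow for a general superharmonic function vanishing at the boundary; you would need a parabolic boundary Harnack argument or a comparison to an explicit harmonic majorant, neither of which is sketched. Second, and more decisively, step (iii) gives a bound in the wrong direction. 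By Brownian scaling, for $|y|<1$ one has $\pr(\tau_y^{bm}>1)=\pr(\tau_{y/|y|}^{bm}>|y|^{-2})\le\pr(\tau_{y/|y|}^{bm}>1)\le Cu(y/|y|)=C|y|^{-p}u(y)$, which is \emph{weaker} than the desired $Cu(y)$ precisely when $|y|<1$, and the factor $|y|^{-p}$ blows up near the apex. Since $y=x/\sqrt t$, the region $|y|\ll1$ is the large-$t$ regime of the lemma, where the $t^{-p/2}$ decay is the whole content; a soft ``survival only gets easier at larger scales'' monotonicity cannot produce it, and one genuinely needs the long-time heat-kernel decay (spectral expansion or the iteration used by Varopoulos). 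The paper bypasses both issues: after the same scaling reduction it checks $\pr(\tau_x^{bm}>1)\le1\le Cu(x)$ trivially for $x\in x_0+K$ (using $p\ge1$ together with $u(x)\ge Cd(x)|x|^{p-1}$), and then cites the remainder of the proof of Theorem~1(ii) in Varopoulos~\cite{Var99}, observing that that part of his argument uses only the Lipschitz property of $K$ and not convexity. If you want to avoid citing Varopoulos, the real work is precisely the near-apex/large-$t$ estimate that your step (iii) does not supply.
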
    
\begin{proof} 
    For convex cones the proof can be found in (0.4.1) of \cite{Var99}
    We will show how the proof of part (ii) in Theorem~1 of~\cite{Var99} at page 344
    can be modified to show the result under our assumptions. 

    First note that for $x\in x_0+K$ we have $d(x)\ge c_0$  for some $c_0$. 
    Then, for  $x\in x_0+K\subset K$, 
    \[
    \pr(\tau_x^{bm}>1)  \le 1\le \frac{1}{c_0}  d(x) \le 
    C d(x)|x|^{p-1}\le C u(x),
    \]
    where we also used ~\eqref{eq:harmonic.boundary.lower} and the assumption 
    $p\ge 1$. 
    Now note that that the rest of the proof in~~\cite{Var99} does not use convexity and uses 
    only the assumption that $K$ is Lipschitz. 
    Thus, repeating the rest of the proof of part (ii) in Theorem~1 of~\cite{Var99} at page 344 in exactly 
    the same way we obtain that  
    \[
        \pr(\tau_x^{bm}>1) \le Cu(x).
    \]
    Then, the statement follows by the scaling property of Brownian motion. 
\end{proof}    

\begin{lemma}
\label{lem:ub1}
Assume that the conditions of Theorem~\ref{thm:C2} are valid.
Assume that $p\ge 1$.  
Then,  
$$
\pr(\tau_x>n)\le C\frac{V_\beta(x)}{n^{p/2}}
\le C\frac{u(x+Rx_0)}{n^{p/2}}.
$$
\end{lemma}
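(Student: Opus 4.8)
The plan is to combine the supermartingale constructed in Lemma~\ref{lem:submart} with the Brownian tail bound \eqref{eq:bound.tail.tau}, via a standard dyadic/comparison argument on the scale $\sqrt{n}$. Fix $R$ large enough that $Y^{(c)}_m$ is a supermartingale for, say, $c=1/2$; write $W(y):=V_\beta(y)=u(x+Rx_0+y)+U_\beta(x+Rx_0+y)$, so that $W\ge u(x+Rx_0+\cdot)$ and, by \eqref{eq:U.zero}, $W(y)\le Cu(x+Rx_0+y)$ whenever $d(x+Rx_0+y)$ is large; dropping the nonnegative drift-sum term in $Y^{(c)}_m$ gives
\[
\e\bigl[W(S(m));\tau_x>m\bigr]\le W(0)=V_\beta(x),\qquad m\ge0.
\]
First I would split the event $\{\tau_x>n\}$ at time $n/2$ (assume $n$ even; the odd case is identical): on $\{\tau_x>n\}$ we have $\tau_x>n/2$, and the walk continues to survive for another $n/2$ steps, so by the Markov property
\[
\pr(\tau_x>n)=\e\bigl[\pr_{S(n/2)}(\tau>n/2);\tau_x>n/2\bigr],
\]
where on the right the inner probability is an exit probability for the random walk started afresh.

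Next I would pass from the random-walk survival probability over $n/2$ steps to the Brownian one using the functional CLT / Donsker-type comparison that is already available in this line of work: for a walk started at a point $y$ with $|y|\le \varepsilon\sqrt{n}$ one has $\pr_y(\tau>n/2)\le C\,\pr_y(\tau^{bm}>cn)\le C u(y)/n^{p/2}$ by \eqref{eq:bound.tail.tau}, while for $|y|\ge\varepsilon\sqrt n$ one uses the trivial bound $\pr_y(\tau>n/2)\le 1\le C u(y)/n^{p/2}$ once $u(y)\gtrsim |y|^{p}\gtrsim (\varepsilon\sqrt n)^p$ — here the hypothesis $p\ge 1$ and \eqref{eq:harmonic.boundary.lower} are used exactly as in the proof of \eqref{eq:bound.tail.tau} to absorb the boundary strip $d(y)$ small into $u(y)$. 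Either way, for \emph{all} $y\in K$,
\[
\pr_y(\tau>n/2)\le C\,\frac{u(y)}{n^{p/2}}\le C\,\frac{W(y-Rx_0)}{n^{p/2}}
\]
(up to harmless constants and a shift by $Rx_0$, which only changes $u$ by a bounded factor on the relevant range by \eqref{diff-bound}). Plugging this into the split above gives
\[
\pr(\tau_x>n)\le \frac{C}{n^{p/2}}\,\e\bigl[W(S(n/2));\tau_x>n/2\bigr]\le \frac{C}{n^{p/2}}\,V_\beta(x),
\]
and the second inequality of the lemma is then immediate from \eqref{eq:U.zero}: $V_\beta(x)=u(x+Rx_0)+U_\beta(x+Rx_0)\le C u(x+Rx_0)$ once $d(x+Rx_0)$ is large, which holds uniformly in $x\in K$ because adding $Rx_0$ pushes every starting point a fixed distance into the cone.

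The main obstacle is the step bounding $\pr_y(\tau>n/2)$ by $Cu(y)/n^{p/2}$ \emph{uniformly in the starting point $y$}, including points close to $\partial K$ and points of large norm; this is where one must invoke a coupling of the killed walk with the killed Brownian motion (or a KMT-type strong approximation, as in \cite{DW15,DW19}) together with the moment assumptions \ref{ass-m}, and where the homogeneity of $u$ and the two-sided bound \eqref{eq:harmonic.boundary}--\eqref{eq:harmonic.boundary.lower} are essential to convert $d(y)$-dependent estimates into clean $u(y)$-estimates. Once that uniform bound is in hand, the supermartingale inequality does the rest mechanically.
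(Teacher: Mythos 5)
Your proposal has a genuine gap at its central step: the claimed uniform bound $\pr_y(\tau>n/2)\le C\,u(y)/n^{p/2}$ for \emph{all} $y\in K$ is essentially the conclusion of the lemma, and it does not follow from the coupling plus \eqref{eq:bound.tail.tau} for starting points near $\partial K$.

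Concretely: the strong approximation allows you to compare the random walk with Brownian motion only up to an error of size $b_n$. To use \eqref{eq:bound.tail.tau} you must shift the starting point into the cone by $b_n x_0$, obtaining $\pr_y(\tau>k)\le C\,u(y+b_nx_0)/k^{p/2}+o((b_n/\sqrt n)^p)$. When $d(y)<b_n$, the term $u(y+b_nx_0)$ is not comparable to $u(y)$ — by Assumption~\ref{ass-g}, $u(y)\le C|y|^{p-1}d(y)$, which is tiny while $u(y+b_nx_0)$ is of order $|y|^{p-1}b_n$ — so the coupling gives you no useful bound in terms of $u(y)$. Likewise, your "trivial" bound $1\le C\,u(y)/n^{p/2}$ for $|y|\ge\varepsilon\sqrt n$ relies on $u(y)\gtrsim|y|^p$, which is false near the boundary: \eqref{eq:harmonic.boundary.lower} only gives $u(y)\gtrsim|y|^{p-1}d(y)$, and $d(y)$ can be arbitrarily small. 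So both halves of your claimed uniform estimate fail precisely where the lemma is hard, in the thin strip $d(y)\lesssim b_n$.

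The paper avoids this by introducing the stopping time $\nu_n:=\inf\{k\ge 1: d(x+S(k))\ge b_n\}$, which is exactly the moment when the coupling argument becomes effective. On $\{\nu_n\le 3n/4\}$ it applies the Markov property at $\nu_n$, the Brownian bound at the (now safely interior) position $x+S(\nu_n)$, and optional stopping for the supermartingale $Y^{(0)}$ from Lemma~\ref{lem:submart} to close the estimate with $V_\beta(x)$. The complementary event $\{\tau_x>n,\nu_n>3n/4\}$, where the walk lingers in the thin boundary strip, is handled separately by a geometric decay argument ($\pr(\tau_x>n,\nu_n>3n/4)\le\pr(\tau_x>n/2)e^{-cn/b_n^2}$), and the two pieces are reconciled by a dyadic iteration $n=2^N$. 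Your plan is missing both the boundary-strip control and the iteration; these are the actual content of the proof and cannot be replaced by the uniform bound you assert.
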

\begin{proof}
Choose $b_n=o(\sqrt{n})$ so that
\begin{equation}
\label{eq:def.b_n}
\pr\left(\sup_{u\le n}|S([u])-B(u)|\ge b_n\right)
=o\left(\left(\frac{b_n}{\sqrt{n}}\right)^p\right).
\end{equation}
Using this approximation, we get
$$
\pr(\tau_y>k)\le \pr(\tau_{y+b_nx_0}^{bm}>k)
+o\left(\left(\frac{b_n}{\sqrt{n}}\right)^p\right),\quad k\le n.
$$
Then, using~\eqref{eq:bound.tail.tau}, 
$$
\pr(\tau_y>k)\le C\frac{u(y+b_nx_0)}{k^{p/2}}
+o\left(\left(\frac{b_n}{\sqrt{n}}\right)^p\right),\quad k\le n.
$$
If $d(y)\ge b_n$ then, using \eqref{diff-bound} and \eqref{eq:harmonic.boundary.lower}, we obtain
$$
u(y+b_nx_0)\le u(y)+Cb_n(|y|^{p-1}+b_n^{p-1})\le C u(y). 
$$
As a result, for all $y\in K$ with $d(y)\ge b_n$,
$$
\pr(\tau_y>n/4)\le C\frac{u(y)}{n^{p/2}}.
$$
Define
$$
\nu_n:=\inf\{k\ge1: d(x+S(k))\ge b_n\}.
$$
Then, by the Markov property,
\begin{align}
\label{upp.bound.0}
\nonumber
\pr(\tau_x>n,\nu_n\le 3n/4)
&\le C\frac{\e[u(x+S(\nu_n));\tau_x>\nu_n,\nu_n\le 3n/4]}{n^{p/2}}
\\
\nonumber
&\le C\frac{\e[u(x+Rx_0+S(\nu_n));\tau_x>\nu_n,\nu_n\le 3n/4]}{n^{p/2}}\\
\nonumber
&\le C\frac{\e[V_\beta(x+S(\nu_n));\tau_x>\nu_n,\nu_n\le 3n/4]}{n^{p/2}}\\
&\le C\frac{\e[Y^{(0)}_{\nu_n};\nu_n\le 3n/4]}{n^{p/2}},
\end{align}
where $Y^{(0)}_n$ is defined as in Lemma~\ref{lem:submart}.  
Since this sequence is a supermartingale,
$$
\e[Y^{(0)}_{\nu_n};\nu_n\le 3n/4]
\le \e[Y^{(0)}_{\nu_n\wedge(3n/4)}]
\le Y_0^{(0)}=V_\beta(x).
$$
Therefore,
\begin{equation}
\label{upp.bound.1}
\pr(\tau_x>n,\nu_n\le 3n/4)\le C\frac{V_\beta(x)}{n^{p/2}}.
\end{equation}

Set 
$$
K_n:=\{y\in K:\, d(y)\ge b_n\}
$$
and 
$$
c_n:=[b_n^2].
$$
Then, by the Markov property,
\begin{align*}
&\pr(\tau_x>n,\nu_n>3n/4)\\
&\hspace{1cm}
\le\pr\left(\tau_x>n/2,x+S(n/2+jc_n)\in K\setminus K_n,
\text{ for all } j\le n/4c_n\right)\\
&\hspace{1cm}
\le\pr(\tau_x>n/2)\left(\sup_{y\in K\setminus K_n}
\pr\left(y+S(c_n)\in K\setminus K_n\right)\right)^{n/4c_n}. 
\end{align*}
Repeating now the arguments from the proof of Lemma 14 in \cite{DW15}, we obtain
$$
\pr(\tau_x>n,\nu_n>3n/4)\le \pr(\tau_x>n/2)e^{-cn/b_n^2}.
$$
Combining this with \eqref{upp.bound.1}, we have
$$
\pr(\tau_x>n)\le C\frac{V_\beta(x)}{n^{p/2}}
+\pr(\tau_x>n/2)e^{-cn/b_n^2}.
$$
Set $\gamma_j:=c2^j/b^2_{2^j}$. Then, letting $n=2^N$ in the previous bound, we have 
$$
\pr(\tau_x>2^N)\le C\frac{V_\beta(x)}{2^{Np/2}}
+e^{-\gamma_N}\pr(\tau_x>2^{N-1}).
$$
Iterating this estimate $m$ times, we obtain
\begin{align*}
\pr(\tau_x>2^N)
&\le C\frac{V_\beta(x)}{2^{Np/2}}
\left(1+\sum_{k=0}^{m-2}2^{(k+1)p/2}
\prod_{j=0}^ke^{-\gamma_{N-j}}\right)\\
&\hspace{2cm}+\prod_{j=0}^{m-1}e^{-\gamma_{N-j}}
\pr(\tau_x>2^{N-m}). 
\end{align*}
Choosing here $m=[N/2]$ and noting that
$\min_{j\le N/2}\gamma_{N-j}$ converges to infinity as
$N\to\infty$, we infer that 
$$
\prod_{j=0}^{m-1}e^{-\gamma_{N-j}}=o(2^{-Np/2})
$$
and
$$
\sup_{N\ge1}
\sum_{k=0}^{m-2}2^{(k+1)p/2}\prod_{j=0}^ke^{-\gamma_{N-j}}
<\infty.
$$
This finishes the proof.
\end{proof}
We can now give a proof  of a strengthened version of 
one of results in McConnell~\cite{MC84}. 
\begin{corollary}
For every $0<q<p$,
\begin{equation}
\label{eq:maximum}
\e\left[\max_{k\le\tau_x}|S(k)|^q\right]
\le C(V_\beta(x))^{q/p}.
\end{equation}
\end{corollary}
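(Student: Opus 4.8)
The plan is to derive the moment bound from the tail estimate for $\tau_x$ together with a maximal inequality for $|S(k)|$ restricted to the event $\{\tau_x>k\}$. The starting point is the layer-cake formula
\[
\e\left[\max_{k\le\tau_x}|S(k)|^q\right]
= q\int_0^\infty r^{q-1}\pr\left(\max_{k\le\tau_x}|S(k)|>r\right)dr,
\]
so everything reduces to a good bound on $\pr(\max_{k\le\tau_x}|S(k)|>r)$. I would split this event according to whether the running maximum is first exceeded ``early'' or ``late''. More precisely, for a threshold $r$ introduce the time $T_r:=\inf\{k\ge1:|S(k)|>r\}$; on $\{T_r\le\tau_x\}$ one has either $T_r\le r^2$ or $T_r>r^2$. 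On the second event we may use Lemma~\ref{lem:ub1}, which gives $\pr(\tau_x>r^2)\le C V_\beta(x)/r^{p}$, and this already contributes the main term. On the first event, $\{T_r\le r^2,\ \tau_x\ge T_r\}$, I would use the Markov property at time $T_r$ and the fact that the increment $|S(T_r)-S(T_r-1)|$ must itself be at least of order $r$ or $|S(T_r-1)|$ is already of order $r$; combined with $\e|X|^2<\infty$ (indeed the slightly stronger Assumption~\ref{ass-m}) this yields a bound decaying fast enough in $r$ that the corresponding integral converges and is dominated by $(V_\beta(x))^{q/p}$ up to a constant.

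The cleanest route, and the one I would actually pursue, is to reduce directly to the exit-time tail. Observe that if $\max_{k\le\tau_x}|S(k)|>r$ then, since the walk starts at $x$ and has increments with finite variance, with overwhelming probability the walk has survived in $K$ for at least of order $r^2$ steps before escaping the ball of radius $r+|x|$; quantitatively, by Kolmogorov's maximal inequality $\pr(\max_{k\le m}|S(k)|>r)\le Cm/r^2$, so for $m=\varepsilon r^2$ this probability is at most $C\varepsilon$. Hence
\[
\pr\left(\max_{k\le\tau_x}|S(k)|>r\right)
\le \pr(\tau_x>\varepsilon r^2)+\pr\left(\max_{k\le \varepsilon r^2}|S(k)|>r,\ \tau_x\le\varepsilon r^2\right)
\le \pr(\tau_x>\varepsilon r^2)+C\varepsilon,
\]
but the crude $C\varepsilon$ term is not summable against $r^{q-1}\,dr$. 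To fix this I would instead iterate over dyadic scales: decompose $\{\max_{k\le\tau_x}|S(k)|>r\}$ using the stopping times at which $|S(k)|$ doubles, apply the Markov property and Lemma~\ref{lem:ub1} at each scale to the new starting point (whose harmonic value is controlled by the Harnack-type bound $u(y)\le C|y|^{p-1}d(y)$ from Assumption~\ref{ass-g}), and sum the resulting geometric series. This produces a genuine bound of the form $\pr(\max_{k\le\tau_x}|S(k)|>r)\le C V_\beta(x)/r^{p}$ for $r\ge (V_\beta(x))^{1/p}$, which is exactly what is needed.

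With such a tail bound in hand the corollary follows immediately: splitting the integral at $r_0:=(V_\beta(x))^{1/p}$, the part $r\le r_0$ contributes at most $r_0^q=(V_\beta(x))^{q/p}$ trivially, and the part $r>r_0$ contributes
\[
q\int_{r_0}^\infty r^{q-1}\cdot C\frac{V_\beta(x)}{r^{p}}dr
= C V_\beta(x)\,\frac{q}{p-q}\,r_0^{q-p}
= \frac{Cq}{p-q}(V_\beta(x))^{q/p},
\]
using $q<p$ for convergence. The main obstacle, as indicated above, is getting the sharp power $r^{-p}$ rather than a crude polynomial loss: this forces the dyadic-scale decomposition together with the Markov property and a careful bookkeeping of the harmonic values $u(y)$ at the intermediate points, which is where Assumption~\ref{ass-g} and Lemma~\ref{lem:ub1} do the real work. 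A secondary, more routine point is handling the increment that carries $|S(k)|$ across a dyadic threshold, which is absorbed using $\e[|X|^2]<\infty$.
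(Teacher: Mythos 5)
Your overall plan --- reduce the moment to a tail estimate, then integrate --- is the right shape, and the final integral-splitting computation at $r_0 = (V_\beta(x))^{1/p}$ is exactly how the paper's argument closes. But you aim at the wrong tail: you try to establish a pointwise bound $\pr\bigl(\max_{k\le\tau_x}|S(k)|>r\bigr)\le C V_\beta(x)/r^{p}$ directly, via a dyadic-scale decomposition, and this is where the sketch breaks down. At a dyadic crossing the walk restarts from some $y$ with $|y|\asymp 2^{j}$, and all that Lemma~\ref{lem:ub1} gives from there is $\pr(\tau_y>2^{2j})\le C\,u(y+Rx_0)/2^{jp}$; since $u(y+Rx_0)$ can itself be of order $2^{jp}$ (precisely when $d(y)\asymp|y|$), the per-scale factor is $O(1)$ rather than a constant strictly less than $1$, and the ``geometric series'' has no reason to decay. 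Your earlier Kolmogorov-maximal-inequality attempt fails for the same basic reason (the $C\varepsilon$ term does not decay in $r$), which you already noticed, but the dyadic patch does not repair it.

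The paper sidesteps all of this with the Burkholder--Davis--Gundy inequality: since $S(n)$ is a martingale with bounded second moments (and, when $p>2$, assumption~\ref{ass-m} supplies $\e|X|^p<\infty$ so that BDG applies for $q<p$), one has $\e\bigl[\max_{k\le\tau_x}|S(k)|^q\bigr]\le C\,\e\bigl[\tau_x^{q/2}\bigr]$, and then the tail estimate of Lemma~\ref{lem:ub1} applied to $\e[\tau_x^{q/2}]=\tfrac{q}{2}\int_0^\infty u^{q/2-1}\pr(\tau_x>u)\,du$ gives, after the very splitting you describe (cutoff at $K=(V_\beta(x))^{2/p}$), the bound $C(V_\beta(x))^{q/p}$. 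This is shorter, needs no new tail estimate for the maximum, and is the standard device for converting a running-maximum moment into a stopping-time moment. If you want to keep your route, you would need a genuinely new argument to prove the $r^{-p}$ tail for $\max_{k\le\tau_x}|S(k)|$ (for instance a supermartingale/optional-stopping argument tailored to the maximum), not the dyadic iteration as you wrote it.
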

\begin{proof}
For every $q<p$, by the Burkholder-Davis-Gundy inequality,
$$
\e\left[\max_{k\le\tau_x}|S(k)|^q\right]
\le C\e[(\tau_x)^{q/2}].
$$
Using Lemma~\ref{lem:ub1}, we have 
\begin{align*}
\e[(\tau_x)^{q/2}]
&=\frac{q}{2}\int_0^\infty u^{q/2-1}\pr(\tau_x>u)du\\
&\le K^{q/2}+\frac{q}{2}\int_K^\infty u^{q/2-1}\pr(\tau_x>u)du\\
&\le K^{q/2}+CV_\beta(x)\frac{q}{2}\int_K^\infty u^{q/2-p/2-1}du\\
&=K^{q/2}+CV_\beta(x)\frac{q}{p-q}K^{(q-p)/2}.
\end{align*}
Taking here $K=(V_\beta(x))^{2/p}$, we have 
$$
\e[(\tau_x)^{q/2}]\le C(V_\beta(x))^{q/p}.
$$
\end{proof}

\begin{lemma}
\label{lem:ub2} If $u(x+S(n)){\rm 1}\{\tau_x>n\}$ is a submartingale then
$$
\pr(\tau_x>n)\le C\frac{\e[u(x+S(n));\tau_x>n]}{n^{p/2}}.
$$
\end{lemma}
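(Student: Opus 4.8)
The plan is to exploit the submartingale property on the event $\{\tau_x>n\}$ together with the already-established Brownian tail bound \eqref{eq:bound.tail.tau} and the upper bound $\pr(\tau_x>n)\le C V_\beta(x)/n^{p/2}$ from Lemma~\ref{lem:ub1}. The key structural idea is the same as in Lemma~\ref{lem:ub1}: stop the walk at the first time $\nu_n$ that it gets far enough from $\partial K$ (distance $\ge b_n$, where $b_n=o(\sqrt n)$ is chosen so that the Koml\'os--Major--Tusn\'ady approximation \eqref{eq:def.b_n} holds), and treat separately the event that $\nu_n$ occurs early and the event that the walk stays near the boundary for a long time.

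First I would split $\pr(\tau_x>n)=\pr(\tau_x>n,\nu_n\le 3n/4)+\pr(\tau_x>n,\nu_n>3n/4)$. On the first event, the Markov property at time $\nu_n$ and the bound $\pr(\tau_y>n/4)\le Cu(y)/n^{p/2}$ for $d(y)\ge b_n$ (derived in the proof of Lemma~\ref{lem:ub1} from \eqref{eq:bound.tail.tau}, \eqref{diff-bound}, \eqref{eq:harmonic.boundary.lower}) give
\[
\pr(\tau_x>n,\nu_n\le 3n/4)\le \frac{C}{n^{p/2}}\,\e\bigl[u(x+S(\nu_n));\tau_x>\nu_n,\nu_n\le 3n/4\bigr].
\]
Now, instead of dominating the right-hand side by the supermartingale $Y^{(0)}$ as in Lemma~\ref{lem:ub1}, I would use the \emph{submartingale} hypothesis: since $u(x+S(k))\ind\{\tau_x>k\}$ is a submartingale and $\nu_n\wedge(3n/4)$ is a bounded stopping time, optional stopping gives
\[
\e\bigl[u(x+S(\nu_n));\tau_x>\nu_n,\nu_n\le 3n/4\bigr]
\le \e\bigl[u(x+S(\nu_n\wedge(3n/4)));\tau_x>\nu_n\wedge(3n/4)\bigr]
\le \e\bigl[u(x+S(n));\tau_x>n\bigr],
\]
where the last inequality is again submartingale optional stopping between times $\nu_n\wedge(3n/4)$ and $n$ (dropping the indicator on the complementary event, which is legitimate since $u\ge 0$). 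This yields $\pr(\tau_x>n,\nu_n\le 3n/4)\le C\,\e[u(x+S(n));\tau_x>n]/n^{p/2}$.

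For the second event, the walk must remain in $K\setminus K_n$ at the sampling times $n/2+jc_n$ with $c_n=[b_n^2]$, $j\le n/(4c_n)$; exactly as in Lemma~\ref{lem:ub1} (the argument from Lemma~14 of \cite{DW15}) one gets $\pr(\tau_x>n,\nu_n>3n/4)\le \pr(\tau_x>n/2)e^{-cn/b_n^2}$. Combining the two pieces and applying the already-proven Lemma~\ref{lem:ub1} to control $\pr(\tau_x>n/2)\le CV_\beta(x)/n^{p/2}$, we obtain
\[
\pr(\tau_x>n)\le C\frac{\e[u(x+S(n));\tau_x>n]}{n^{p/2}}+CV_\beta(x)\frac{e^{-cn/b_n^2}}{n^{p/2}}.
\]
Finally, since $V(x)=\lim_n \e[u(x+S(n));\tau_x>n]$ is finite and positive (first part of Theorem~\ref{thm:C2}), one has $\e[u(x+S(n));\tau_x>n]\ge c\,V_\beta(x)$ for a suitable constant and all large $n$ — or more simply, $V_\beta(x)e^{-cn/b_n^2}=o(\e[u(x+S(n));\tau_x>n])$ because $b_n=o(\sqrt n)$ makes the exponential negligible — so the second term is absorbed into the first, completing the proof. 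The main obstacle is verifying that the submartingale optional-stopping steps are valid (integrability of $u(x+S(\cdot))$ on the stopped events and the sign bookkeeping when discarding indicators), but these follow from $u\ge 0$ and the bound $u(y)\le C|y|^p$ together with finiteness of the relevant moments under Assumption~\ref{ass-m}.
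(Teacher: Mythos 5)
Your first half matches the paper exactly: the split at $\nu_n$, the Markov property plus the Brownian tail bound giving
\[
\pr(\tau_x>n,\nu_n\le 3n/4)\le \frac{C}{n^{p/2}}\,\e\bigl[u(x+S(\nu_n));\tau_x>\nu_n,\nu_n\le 3n/4\bigr],
\]
and then two optional-stopping steps for the nonnegative submartingale $u(x+S(k))\mathrm{1}\{\tau_x>k\}$ (first to pass to $\nu_n\wedge(3n/4)$, then to $n$) are precisely what the paper does.

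The second half, however, has a genuine gap. You bound $\pr(\tau_x>n/2)\le C\,V_\beta(x)/n^{p/2}$ by invoking Lemma~\ref{lem:ub1}, and then absorb the remainder using the finiteness of $V(x)$ from Theorem~\ref{thm:C2}. Both of these require Assumption~\ref{ass-m}(M3), because $V_\beta(x)=u(x+Rx_0)+U_\beta(x+Rx_0)$ is only finite once the supermartingale construction works, which in turn needs $\e[|X|^2\log(1+|X|)]<\infty$. But Lemma~\ref{lem:ub2} is stated with no moment hypothesis beyond the submartingale property, and it is applied (as the estimate \eqref{ex.8}) in the proof of Proposition~\ref{prop:example2} for the walks of Example~\ref{Example2}, where $\e[W_1^2\log(1+W_1)]$ is allowed to be infinite — indeed part (c) of that proposition is precisely the dichotomy over whether this moment is finite. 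So a proof of Lemma~\ref{lem:ub2} must not rely on $V_\beta(x)<\infty$, and the route through Lemma~\ref{lem:ub1} breaks down exactly in the situation the lemma is designed for.

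The paper's way around this is not to cite Lemma~\ref{lem:ub1}, but to \emph{repeat} its iteration argument with a different majorant: after the submartingale step one has
\[
\pr(\tau_x>n)\le C\frac{E_n}{n^{p/2}}+\pr(\tau_x>n/2)e^{-cn/b_n^2},\qquad E_n:=\e[u(x+S(n));\tau_x>n],
\]
and then iterates along $n=2^N$ exactly as at the end of Lemma~\ref{lem:ub1}, using that $E_n$ is \emph{monotone increasing} (an immediate consequence of the submartingale hypothesis) so that all the intermediate terms $E_{2^{N-k}}$ can be replaced by $E_{2^N}$; the iteration terminates with $\pr(\tau_x>2^{N-m})\le 1$, and the geometrically small remainder is absorbed using $E_n\ge E_1>0$. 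Here $E_n$ is always finite for fixed $n$ under the second-moment assumption (for $p\le 2$; for $p>2$ one also needs $\e|X|^p<\infty$), so nothing about $V_\beta$, $U_\beta$, or Theorem~\ref{thm:C2} is used. You should replace your appeal to Lemma~\ref{lem:ub1} and to $V_\beta$ by this self-contained iteration; otherwise the argument fails precisely where the lemma is needed.
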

\begin{proof}
By the first inequality in \eqref{upp.bound.0},
\begin{align*}
\pr(\tau_x>n,\nu_n\le3n/4)
&\le c\frac{\e[u(x+S(\nu_n));\tau_x>\nu_n,\nu_n\le 3n/4]}{n^{p/2}}
\\
&\le c\frac{\e[u(x+S(\nu_n\wedge(3n/4)));\tau_x>\nu_n\wedge 3n/4]}{n^{p/2}}.
\end{align*}
Due to the submartingale property of
$u(x+S(n)){\rm 1}\{\tau_x>n\}$,
$$
\e[u(x+S(\nu_n\wedge(3n/4)));\tau_x>\nu_n\wedge 3n/4]
\le \e[u(x+S(n));\tau_x>n].
$$
Consequently,
$$
\pr(\tau_x>n,\nu_n\le3n/4)
\le C\frac{\e[u(x+S(n));\tau_x>n]}{n^{p/2}}.
$$
Repeating now the second part of the proof of the previous lemma 
and using the monotonicity of $\e[u(x+S(n));\tau_x>n]$, we get the desired result.
\end{proof}
\section{Asymptotic properties of  the harmonic function.}

In order to show the equivalence $V(x)\sim u(x)$ we first notice that the assumption $d(x)\to\infty$ implies that 
$u(x)\sim u(x+Rx_0)$ for every fixed $R$ 
when $u(x)\to+\infty$. 
Next, by \eqref{eq:beta_2},
\begin{align}
\label{eq:proof.9}
\nonumber
\left|\frac{V(x)}{u(x+Rx_0)}-1\right|
&\le \frac{\e[u(x+Rx_0+S(\tau_x))]}{u(x+Rx_0)}
+\frac{\left|\e\left[\sum_{k=0}^{\tau_x-1}f(x+Rx_0+S(k))\right]\right|}{u(x+Rx_0)}\\
&\le \frac{\e[u(x+Rx_0+S(\tau_x))]}{u(x+Rx_0)}
+\varepsilon(R).
\end{align}
When $p\ge 1$ and $x+S(\tau_x)\notin K$, we obtain from~\eqref{diff-bound} 
that 
\begin{equation}
\label{eq:proof.9.1}
u(x+Rx_0+S(\tau_x))
\le CR|x+Rx_0+S(\tau_x)|^{p-1}
\le CR^p+C|x|^{p-1}+C|S(\tau_x)|^{p-1}.
\end{equation}
It follows from~\eqref{eq:maximum} that 
\begin{equation}
\label{eq:proof.10}
\e[|S(\tau_x)|^{q}]\le C(1+|x|^{q}),\quad q<p.
\end{equation}
Consequently,
$$
\e[u(x+Rx_0+S(\tau_x))]\le CR^p+C|x|^{p-1}.
$$
Applying now \eqref{eq:harmonic.boundary.lower}, we get 
$$
\e[u(x+Rx_0+S(\tau_x))]\le CR^p+\frac{C}{d(x)}u(x)
\le CR^p+\frac{C}{d(x)}u(x+Rx_0)
$$
and, consequently,
\begin{equation}
    \label{eq:proof.10.1}
\frac{\e[u(x+Rx_0+S(\tau_x))]}{u(x+Rx_0)}\to0.
\end{equation} 
When $p<1$ the random variable $u(x+Rx_0+S(\tau_x))$ is bounded. 
Hence~\eqref{eq:proof.10.1} holds when $u(x)\to +\infty,$ or, equivalently, 
when $d(x)|x|^{p-1}\to+\infty$.

Combining  convergence  in~ \eqref{eq:proof.10.1} 
with \eqref{eq:proof.9} and using the relation
$u(x)\sim u(x+Rx_0)$, we conclude that 
$$
\limsup \left|\frac{V(x)}{u(x)}-1\right|
\le \varepsilon(R). 
$$
Letting here $R\to\infty$ we complete the proof.

We now turn to the proof of the uniform convergence 
in the case $p\ge 1$. 
We first prove the following auxiliary result:
\begin{align}
\label{eq:maximum1}
\nonumber
&\mathbf{E}[|S(n)|^{p-1};\tau_x>n]
+\mathbf{E}[|S(\tau_x)|^{p-1};\tau_x>n]\\
&\hspace{1cm}\le2\mathbf{E}[\max_{k\le\tau_x}|S(k)|^{p-1};\tau_x>n]=o(V_\beta(x))
\end{align}
uniformly in $x\in K$. Fix some $p'>p$. Then 
$\frac{1}{q'}:=1-\frac{1}{p'}>1-\frac{1}{p}$ and, consequently,
$q'<p/(p-1)$. Using now the H\"older inequality,  \eqref{eq:maximum} with $q=q'(p-1)$ 
and Lemma~\ref{lem:ub1}, we get 
\begin{align*}
\mathbf{E}[\max_{k\le\tau_x}|S(k)|^{p-1};\tau_x>n]
&\le\left(\mathbf{E}[\max_{k\le\tau_x}|S(k)|^{q'(p-1)}]\right)^{1/q'}\pr^{1/p'}(\tau_x>n)\\
&\le CV_\beta^{1-1/p}(x)\frac{V_\beta^{1/p'}(x)}{n^{p/2p'}}.
\end{align*}
Since $V_\beta^{1/p'-1/p}(x)$ is bounded, we get \eqref{eq:maximum1}.

Combining \eqref{eq:proof.6}, \eqref{eq:maximum1} and Lemma~\ref{lem:ub1}, we have
\begin{align*}
&\Big|\e[u(x+Rx_0+S(n));\tau_x>n]-\e[u(x+S(n));\tau_x>n]\Big|\\
&\hspace{2cm}\le
C(R^p+R|x|^{p-1})\pr(\tau_x>n)+
\e[|S(n)|^{p-1};\tau_x>n]\\
&\hspace{2cm}\le C\frac{R^p+R|x|^{p-1}}{n^{p/2}}V_\beta(x)
+o(V_\beta(x)).
\end{align*}
Therefore,
$$
\Big|\e[u(x+Rx_0+S(n));\tau_x>n]-\e[u(x+S(n));\tau_x>n]\Big|
=o(V_\beta(x))
$$
uniformly in $x\in K$ such that $|x|=o(n^{p/2(p-1)})$. 
This implies that we are left to show that
\begin{equation}
\label{eq:uniform}
\e[u(x+Rx_0+S(n));\tau_x>n]-V(x)=o(V_\beta(x))
\end{equation}
uniformly in $x\in K$ such that $|x|=o(n^{p/2(p-1)})$.
Combining \eqref{eq:defV} and \eqref{eq:proof.1}, we have 
\begin{align*}
&\e[u(x+Rx_0+S(n));\tau_x>n]-V(x)\\
&\hspace{1cm}=\e[u(x+Rx_0+S(\tau_x));\tau_x>n]
-\e\left[\sum_{k=n}^{\tau_x-1}f(x+Rx_0+S(k));\tau_x>n\right].
\end{align*}
Using~\eqref{eq:proof.9.1} 
 and~\eqref{eq:maximum1}
we obtain 
\begin{multline*}
    \e[u(x+Rx_0+S(\tau_x));\tau_x>n]\\ 
    \le 
    C(R^p+R|x|^{p-1})\pr(\tau_x>n)+
\e[|S(\tau_x)|^{p-1};\tau_x>n]
=o(V_\beta(x)), 
\end{multline*}    
for $x\in K$ such that $|x|=o(n^{p/2(p-1)})$. 
For $y\in K$ now put  
\[
Q(y):=     \sum_{k=0}^{\tau_y-1}f(y+Rx_0+S(k)). 
\]
and note that,  by the Markov property, 
\[
    \e\left[\sum_{k=n}^{\tau_x-1}f(x+Rx_0+S(k));\tau_x>n\right]
    = \e[Q(x+S(n));\tau_x>n]. 
\]
By Theorem~\ref{thm:C2}, as $d(x)\to \infty$, $V(x)\sim u(x)$ and, therefore, 
$Q(x)=o(u(x))$ as $d(x)\to \infty$. 
Consider an increasing sequence $\gamma_n\uparrow \infty$
and  split 
\begin{multline*}
    \e[Q(x+S(n));\tau_x>n]\\
    =
    \e[Q(x+S(n));\tau_x>n, d(x+S(n))\ge \gamma_n]\\ 
    +\e[Q(x+S(n));\tau_x>n, d(x+S(n))< \gamma_n]. 
\end{multline*}
Then,  for some $\varepsilon_n\downarrow 0$, 
\begin{align*}
    \e[Q(x+S(n));\tau_x>n, d(x+S(n))\ge \gamma_n]
    &\le \varepsilon_n 
    \e[u(x+S(n));\tau_x>n]\\ 
    \le C \varepsilon_n  
    \e[V_\beta(x+S(n));\tau_x>n] 
    &\le C \varepsilon_n   V_\beta(x) = o(V_\beta(x)) 
\end{align*}
uniformly in $x\in K$, where we have also used the supermartingale 
property of  $V_\beta(x)$. 
By~\eqref{eq:beta_1}, 
\begin{multline*}
    \e[Q(x+S(n));\tau_x>n, d(x+S(n))< \gamma_n]\\ 
    \le 3 
    \e[V_\beta(x+S(n));\tau_x>n, d(x+S(n))< \gamma_n]\\
    \le 
    C\e[u(x+Rx_0+S(n));\tau_x>n, d(x+S(n))< \gamma_n] \\
    \le C\gamma_n 
    \e[|x+Rx_0+S(n)|^{p-1};\tau_x>n].
\end{multline*}
Then,  for  $\gamma_n \uparrow \infty$ sufficiently slowly, 
by Lemma~\ref{lem:ub1} and~\eqref{eq:maximum1}, 
\begin{align*}
    &\e[Q(x+S(n));\tau_x>n, d(x+S(n))< \gamma_n]\\ 
    &\hspace{1cm}\le 
    C\gamma_n
    \left( |x+Rx_0|^{p-1}
    \pr(\tau_x>n)
    + \e[|x+Rx_0+S(n)|^{p-1};\tau_x>n]
    \right)\\ 
    &\hspace{1cm}= o(V_\beta(x)), 
\end{align*}
uniformly in  $x\in K$ such that $|x|=o(n^{p/2(p-1)})$.  
Thus, the proof of Theorem~\ref{thm:C2} is complete.

\section{proof of Theorem~\ref{thm:smooth_and_convex} for  cones with $1\le p<2$. 
}
Part (a) is immediate from Lemma~\ref{lem:ub1}.

Choose $m=m(n)$ so that $m(n)=o(n)$ and 
$b_n^2=o(m(n))$, where $b_n$ is defined by \eqref{eq:def.b_n}.
Fix some $\varepsilon<1$, $A>1$ and define
\begin{align*}
&K_1:=\{y\in K:\, d(y)\le\varepsilon\sqrt{m}, |y|\le A\sqrt{m}\},\\ 
&K_2:=\{y\in K:\, d(y)>\varepsilon\sqrt{m}, |y|\le A\sqrt{m}\},\\ 
&K_3:=\{y\in K:\, |y|> A\sqrt{m}\}.
\end{align*}
Then, by the Markov property at time $m$,
\begin{align}
\label{eq:proof.11}
\nonumber
\pr&\left(\frac{x+S(n)}{\sqrt{n}}\in D,\tau_x>n\right)\\
\nonumber
=&\int_{K_1}\pr(x+S(m)\in dy,\tau_x>m)
\pr\left(\frac{y+S(n-m)}{\sqrt{n}}\in D,\tau_y>n-m\right)\\
\nonumber
&+\int_{K_2}\pr(x+S(m)\in dy,\tau_x>m)
\pr\left(\frac{y+S(n-m)}{\sqrt{n}}\in D,\tau_y>n-m\right)\\
&+\int_{K_3}\pr(x+S(m)\in dy,\tau_x>m)
\pr\left(\frac{y+S(n-m)}{\sqrt{n}}\in D,\tau_y>n-m\right).
\end{align}
Using the upper bound from (a), obtain we
\begin{align*}
\int_{K_1}&\pr(x+S(m)\in dy,\tau_x>m)
\pr\left(\frac{y+S(n-m)}{\sqrt{n}}\in D,\tau_y>n-m\right)\\
&\le\int_{K_1}\pr(x+S(m)\in dy,\tau_x>m)
\pr\left(\tau_y>n-m\right)\\
&\le\frac{C}{(n-m)^{p/2}}
\int_{K_1}\pr(x+S(m)\in dy,\tau_x>m)u(y+Rx_0)\\
&\le\frac{C}{n^{p/2}}
\int_{K_1}\pr(x+S(m)\in dy,\tau_x>m)u(y+Rx_0).
\end{align*}
It follows from \eqref{eq:harmonic.boundary} that 
$$
u(y)\le u(y+Rx_0)\le C|y+Rx_0|^{p-1}d(y+Rx_0)\le C\varepsilon A^{p-1} m^{p/2},
\quad y\in K_1.
$$
Therefore,
\begin{align}
\label{eq:proof.12}
\nonumber
\int_{K_1}&\pr(x+S(m)\in dy,\tau_x>m)
\pr\left(\frac{y+S(n-m)}{\sqrt{n}}\in D,\tau_y>n-m\right)\\
&\hspace{1cm}
\le C\varepsilon A^{p-1}\frac{m^{p/2}}{n^{p/2}}\pr(\tau_x>m)
\le C\varepsilon A^{p-1}\frac{u(x+Rx_0)}{n^{p/2}}
\end{align}
and 
\begin{align}
\label{eq:proof.13}
\int_{K_1}\pr(x+S(m)\in dy,\tau_x>m)u(y)\le C\varepsilon A^{p-1}u(x+Rx_0).
\end{align}

Using the upper bound from (a) once again, we have 
\begin{align*}
\int_{K_3}&\pr(x+S(m)\in dy,\tau_x>m)
\pr\left(\frac{y+S(n-m)}{\sqrt{n}}\in D,\tau_y>n-m\right)\\
&\le \frac{C}{n^{p/2}}
\int_{K_3}\pr(x+S(m)\in dy,\tau_x>m)u(y+Rx_0)\\
&\le \frac{C}{n^{p/2}}\e\left[|S(m)|^p;\tau_x>m,
|S(m)|>A\sqrt{m}/2\right],
\end{align*}
uniformly in $|x\le \sqrt{m}/2$. 
Applying now the Markov inequality, we obtain 
\begin{align*}
\int_{K_3}&\pr(x+S(m)\in dy,\tau_x>m)
\pr\left(\frac{y+S(n-m)}{\sqrt{n}}\in D,\tau_y>n-m\right)\\
&\le \frac{C}{n^{p/2}}(A\sqrt{m})^{p-2}
\e\left[|S(\tau_x\wedge m)|^2\right].
\end{align*}
Applying the Optional Stopping Theorem to the martingale 
$|S(n)|^2-dn $ 
and the upper bound in Lemma~\ref{lem:ub1} 
for the tail of $\tau_x$, we conclude that 
$$
\e\left[|S(\tau_x\wedge m)|^2\right]
\le C\e[\tau_x\wedge m]\le Cu(x+Rx_0)m^{1-p/2}.
$$
As a result we have 
\begin{align}
\label{eq:proof.14}
\nonumber
 \int_{K_3}&\pr(x+S(m)\in dy,\tau_x>m)
\pr\left(\frac{y+S(n-m)}{\sqrt{n}}\in D,\tau_y>n-m\right)\\
&\hspace{1cm}\le \frac{C}{n^{p/2}}A^{p-2}u(x+Rx_0).
\end{align}
By the same arguments, 
\begin{align}
\label{eq:proof.15}
 \int_{K_3}&\pr(x+S(m)\in dy,\tau_x>m)u(y)
 \le \frac{C}{n^{p/2}}A^{p-2}u(x+Rx_0).
\end{align}
Finally, \eqref{eq:def.b_n} implies that 
\begin{align*}
\pr\left(\frac{y+S(n-m)}{\sqrt{n}}\in D,\tau_y>n-m\right)
&\sim \pr\left(\frac{y+B(n-m)}{\sqrt{n}}\in D,\tau^{bm}_y>n-m\right)\\
&\sim\varkappa\frac{u(y)}{n^{p/2}}\int_D u(z)e^{-|z|^2/2}dz
\end{align*}
uniformly in $y\in K_2.$
Consequently,
\begin{align*}
 \int_{K_2}&\pr(x+S(m)\in dy,\tau_x>m)
\pr\left(\frac{y+S(n-m)}{\sqrt{n}}\in D,\tau_y>n-m\right)\\
&\hspace{1cm}\sim
\frac{\varkappa}{n^{p/2}}\int_D u(z)e^{-|z|^2/2}dz
\int_{K_2}\pr(x+S(m)\in dy,\tau_x>m)u(y).
\end{align*}
Combining this with \eqref{eq:proof.13} and \eqref{eq:proof.15},
we get
\begin{align}
\label{eq:proof.16} 
\nonumber
&\Bigg|\int_{K_2}\pr(x+S(m)\in dy,\tau_x>m)
\pr\left(\frac{y+S(n-m)}{\sqrt{n}}\in D,\tau_y>n-m\right)\\
\nonumber
&\hspace{2cm}-\frac{\varkappa}{n^{p/2}}\int_D u(z)e^{-|z|^2/2}dz
\e[u(x+S(m));\tau_x>m]\Bigg|\\
&\hspace{1cm}\le Cu(x+Rx_0)\frac{\varepsilon A^{p-1}+A^{p-2}}{n^{p/2}}
+o\left(\frac{1}{n^{p/2}}\right).
\end{align}
Plugging \eqref{eq:proof.12}, \eqref{eq:proof.14} and \eqref{eq:proof.16} into \eqref{eq:proof.11}, we obtain 
\begin{align*}
&\Bigg|\pr\left(\frac{x+S(n)}{\sqrt{n}}\in D,\tau_x>n\right)
-\frac{\varkappa}{n^{p/2}}\int_D u(z)e^{-|z|^2/2}dz
\e[u(x+S(m));\tau_x>m]\Bigg|\\
&\hspace{1cm}\le Cu(x+Rx_0)\frac{\varepsilon A^{p-1}+A^{p-2}}{n^{p/2}}
+o\left(\frac{1}{n^{p/2}}\right), 
\end{align*}
uniformly in $|x|\le \sqrt{m}$. 
Letting here first $\varepsilon\to0$ and then $A\to\infty$
and recalling that $\e[u(x+S(m));\tau_x>m]\to V(x)$ 
uniformly in $|x|\le \sqrt{m}$, we finally arrive at the relation 
$$
\pr\left(\frac{x+S(n)}{\sqrt{n}}\in D,\tau_x>n\right)
\sim\frac{\varkappa V(x)}{n^{p/2}}\int_D u(z)e^{-|z|^2/2}dz.
$$
Even simpler arguments give
$$
\pr\left(\tau_x>n\right)
\sim\frac{\varkappa V(x)}{n^{p/2}}.
$$
Thus, (b)  is  proven for $1\le p<2$. 
\section{proof of Theorem~\ref{thm:smooth_and_convex} for $p\ge2$.}
According to Corollary 3.2 in Sakhanenko~\cite{Sakh}, one can construct $S(n)$ and a Brownian motion $B(t)$ on a joint probability space so that
\begin{multline*}
    \pr\left(\sup_{u\le n}|S([u])-B(u)|>x\right)
    \\ 
    \le Cn\left(\e\min\left\{\frac{|X|^2}{x^2},\frac{|X|^3}{x^3}\right\}
    +\e\min\left\{\frac{|B(1)|^2}{x^2},\frac{|B(1)|^3}{x^3}\right\}
    \right).
\end{multline*}
We next notice that the assumption $\e[|X|^2\log|X|]<\infty$ implies that
$$
\e\left[|X|^2;|X|>x\right]\le\frac{1}{\log x}\e\left[|X|^2\log|X|;|X|>x\right]=o\left(\frac{1}{\log x}\right)
$$
and 
\begin{align*}
\e\left[|X|^3;|X|\le x\right]
&\le x^{3/4}+\e\left[|X|^3;x^{1/4}<|X|\le x\right]\\
&\le x^{3/4}+\frac{x}{\log x}\e\left[|X|^2\log |X|;x^{1/4}<|X|\le x\right]=o\left(\frac{x}{\log x}\right).
\end{align*}
Consequently,
\begin{multline*}
    \e\min\left\{\frac{|X|^2}{x^2},\frac{|X|^3}{x^3}\right\}\\ 
    =\frac{1}{x^3}\e\left[|X|^3;|X|\le x\right]
    +\frac{1}{x^2}\e\left[|X|^3;|X|>x\right]
    =o\left(\frac{1}{x^2\log x}\right).
\end{multline*}
Obviously, the same bound holds for 
$X$ replaced by $B(1)$.  As a result,
$$
\pr\left(\sup_{u\le n}|S([u])-B(u)|>x\right)
=o\left(\frac{n}{x^2\log x}\right).
$$
Then we can choose $x=b_n=o\left(\frac{\sqrt{n}}{\log^{1/4}n}\right)$such that
\begin{equation}
\label{eq:p=2.coupl}
\pr\left(\sup_{u\le n}|S([u])-B(u)|>x\right)
=o\left(\frac{1}{\log^{1/2} n}\right)
=o\left(\frac{b_n^2}{n}\right).
\end{equation}
\begin{lemma}
\label{lem:coupling}
Set $\delta_n=\log^{-p/8}n$.
Uniformly in $y\in K$ such that $d(y)\ge \delta_n\sqrt{n}$ and 
$|y|\le\varepsilon_n\sqrt{n}$ for some $\varepsilon_n\downarrow0$,
$$
\pr\left(\frac{y+S(n)}{\sqrt{n}}\in D,\tau_y>n\right)=(\varkappa+o(1))
\left(\int_D u(z)e^{-|z|^2/2}dz\right)\frac{u(y)}{n^{p/2}},
$$
where $D$ is either a compact subset of $K$ or $D=K$.
\end{lemma}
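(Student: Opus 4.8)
The plan is to prove Lemma~\ref{lem:coupling} via a strong approximation (coupling) argument, transferring the known asymptotics for Brownian motion in the cone to the random walk. The scheme follows the classical sandwiching used in \cite{DW15,DW19}, but now the quantitative coupling error comes from \eqref{eq:p=2.coupl}, which is why the threshold $d(y)\ge\delta_n\sqrt n$ with $\delta_n=\log^{-p/8}n$ appears: it must dominate the coupling fluctuation $b_n=o(\sqrt n/\log^{1/4}n)$ while still tending to zero relative to $\sqrt n$.

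First I would fix the coupling from Sakhanenko's theorem giving \eqref{eq:p=2.coupl}, and write $b_n$ for the approximation scale. On the event $\{\sup_{u\le n}|S([u])-B(u)|\le b_n\}$ one has the inclusions of events
$$
\{\tau^{bm}_{y+b_nx_0}>n,\ \tfrac{y+B(n)}{\sqrt n}\in D^{-}\}\subset \{\tau_y>n,\ \tfrac{y+S(n)}{\sqrt n}\in D\}\subset \{\tau^{bm}_{y-b_nx_0}>n,\ \tfrac{y+B(n)}{\sqrt n}\in D^{+}\},
$$
where $D^{\pm}$ are the $b_n/\sqrt n$-dilations/erosions of $D$ inside/around $K$; for $D=K$ one shifts the start by $\mp b_nx_0$ only. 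Hence
$$
\Big|\pr\big(\tfrac{y+S(n)}{\sqrt n}\in D,\tau_y>n\big)-\pr\big(\tfrac{y+B(n)}{\sqrt n}\in D,\tau^{bm}_y>n\big)\Big|
$$
is bounded by the coupling-error probability $o(b_n^2/n)=o(\log^{-1/2}n)$ plus the Brownian boundary-layer probabilities $\pr(\tau^{bm}_{y\pm b_nx_0}>n,\ \operatorname{dist}(y+B(n),\partial K)\lesssim b_n)$ and the analogous near-$\partial D$ terms. These Brownian quantities I would control by the local limit / heat-kernel estimates for Brownian motion in Lipschitz cones (the Brownian counterpart of \eqref{eq:s_and_c.3}, together with \eqref{eq:bound.tail.tau}): each is $O\big(\tfrac{u(y)}{n^{p/2}}\cdot \eta(b_n/\sqrt n)\big)$ for some $\eta(t)\to0$, which is $o\big(u(y)/n^{p/2}\big)$ since $b_n/\sqrt n\to0$. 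The shift by $\pm b_nx_0$ costs $u(y\pm b_nx_0)=u(y)(1+o(1))$ uniformly once $d(y)\ge\delta_n\sqrt n\gg b_n$, using \eqref{diff-bound1} and \eqref{eq:harmonic.boundary.lower}.

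Second, for the Brownian term itself I would invoke the known asymptotics
$$
\pr\Big(\tfrac{y+B(n)}{\sqrt n}\in D,\ \tau^{bm}_y>n\Big)=(\varkappa+o(1))\Big(\int_D u(z)e^{-|z|^2/2}\,dz\Big)\frac{u(y)}{n^{p/2}},
$$
uniform over $y$ with $|y|\le\varepsilon_n\sqrt n$ — this is the Brownian scaling relation, obtained by writing $y+B(n)=\sqrt n\,(y/\sqrt n+B(1))$ in distribution and using the homogeneity of $u$ together with the boundary Harnack principle to replace $u(y/\sqrt n+\cdot)$ by $u(y)/n^{p/2}$ up to $1+o(1)$; the hypothesis $|y|=o(\sqrt n)$ is exactly what makes this replacement uniform. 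Combining the two displays gives the claim.

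\textbf{Main obstacle.} The delicate point is making the boundary-layer estimates \emph{uniform} over the whole admissible range of $y$ — in particular near the boundary, where $d(y)$ may be as small as $\delta_n\sqrt n$. One must check that the $b_n$-shift and the $b_n$-thickened boundary layer each contribute only a $o(1)$ \emph{relative} error to $u(y)/n^{p/2}$, uniformly; this forces the precise relationship $b_n\ll\delta_n\sqrt n\ll\sqrt n$, i.e. $\log^{-1/4}n\cdot$(correction) $\ll\log^{-p/8}n$, which is why $\delta_n=\log^{-p/8}n$ is chosen as it is. A secondary technical nuisance is the near-$\partial D$ term for a general compact $D\subset K$: one needs $D$'s boundary to be Lebesgue-null for the Gaussian density, which holds for compact $D$ with the tacit regularity, and the erosion/dilation $D^{\pm}$ must stay inside $K$, handled by shrinking $D$ slightly away from $\partial K$ first and then letting the shrinkage vanish. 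Once these uniformities are in place the proof is a routine sandwich.
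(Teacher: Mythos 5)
Your overall scheme — sandwich the random‑walk event between two Brownian events via a strong approximation, then invoke the Brownian scaling limit — is exactly what the paper has in mind: the paper's own proof is a one‑line pointer ("very similar to the beginning of the proof of Lemma~\ref{lem:ub1}; in the case $p>2$, a stronger result has been proven in Lemma 20 of \cite{DW15}"), and what you wrote is a fleshed‑out version of that pointer.

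However, there is a real gap for $p>2$. Your argument uses the $\e[|X|^2\log(1+|X|)]$‑based coupling \eqref{eq:p=2.coupl}, which gives a coupling‑error probability of size $o(b_n^2/n)=o(\log^{-1/2}n)$ for $b_n=o(\sqrt n/\log^{1/4}n)$. For the conclusion this \emph{absolute} error must be a $o(1)$ \emph{relative} error against $u(y)/n^{p/2}$ uniformly over the admissible range, and the infimum of $u(y)/n^{p/2}$ there is of order $\delta_n^{p}=\log^{-p^2/8}n$ (attained when $|y|\asymp d(y)\asymp\delta_n\sqrt n$, using \eqref{eq:harmonic.boundary.lower}). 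Thus you need $o(\log^{-1/2}n)=o(\log^{-p^2/8}n)$, i.e. $p^2/8\le 1/2$, i.e. $p\le 2$ (with the borderline $p=2$ rescued by the little‑$o$). For $p>2$ this fails with your coupling and your $b_n$. Your own constraint "$\log^{-1/4}n\cdot\mbox{(correction)}\ll\log^{-p/8}n$" reflects the same tension: for $p>2$ the right side is \emph{smaller} than $\log^{-1/4}n$, so the "correction" must supply polylogarithmic gain — which cannot come from \eqref{eq:p=2.coupl} alone. What is needed, and what the paper uses by citing Lemma 20 of \cite{DW15}, is the stronger coupling afforded by the extra assumption $\e[|X|^p]<\infty$ in the case $p>2$, which gives a polynomial (rather than polylogarithmic) rate $b_n=n^{\theta}$ with $\theta<1/2$; then both $b_n=o(\delta_n\sqrt n)$ and "coupling error $\ll\delta_n^p$" are clear. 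In short: your proof is the paper's proof for $p=2$, but as written it silently asserts the crucial estimate "$o(b_n^2/n)=o(u(y)/n^{p/2})$" which is false in your range for $p>2$; you need to split the cases and plug in the better coupling for $p>2$.

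A second, smaller imprecision: in listing the error terms you assert that the coupling‑error probability and the boundary‑layer probabilities are both "$o(u(y)/n^{p/2})$ since $b_n/\sqrt n\to0$." For the boundary‑layer term this is correct (the contribution is $O\bigl(u(y)/n^{p/2}\cdot(b_n/\sqrt n)^2\bigr)$, using $u(z)\le C|z|^{p-1}d(z)$ on the layer), but for the coupling‑error probability the reason is not "$b_n/\sqrt n\to 0$" — it is the comparison with $\delta_n^p$ discussed above, which is where $\delta_n=\log^{-p/8}n$ actually enters.
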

\begin{proof}
Since the proof is very similar to the beginning of the proof of Lemma~\ref{lem:ub1}, we omit it. In the case $p>2$, a stronger result has been proven in Lemma 20 of \cite{DW15}.  
\end{proof}

Define
\begin{align*}
&K_1:=\{y\in K:\, d(y)\le\delta_n\sqrt{n}, |y|\le \varepsilon_n\sqrt{n}\},\\ 
&K_2:=\{y\in K:\, d(y)>\delta_n\sqrt{n}, |y|\le \varepsilon_n\sqrt{n}\},\\ 
&K_3:=\{y\in K:\, |y|> \varepsilon_n\sqrt{n}\}.
\end{align*}
Set also
$$
m=m_n=\left[\frac{n}{\log^{1/4}n}\right].
$$
Then, by the Markov property at time $m$,
\begin{align}
\label{eq:proof.11.new}
\nonumber
\pr&\left(\frac{x+S(n)}{\sqrt{n}}\in D,\tau_x>n\right)\\
\nonumber
&=\int_{K_2}\pr(x+S(m)\in dy,\tau_x>m)
\pr\left(\frac{y+S(n-m)}{\sqrt{n}}\in D,\tau_y>n-m\right)\\
&+\int_{K_1\cup K_3}\pr(x+S(m)\in dy,\tau_x>m)
\pr\left(\frac{y+S(n-m)}{\sqrt{n}}\in D,\tau_y>n-m\right).
\end{align}
It is immediate from Lemma~\ref{lem:coupling} that 
\begin{align*}
 \int_{K_2}&\pr(x+S(m)\in dy,\tau_x>m)
\pr\left(\frac{y+S(n-m)}{\sqrt{n}}\in D,\tau_y>n-m\right)\\
&\hspace{1cm}=
\frac{\varkappa+o(1)}{n^{p/2}}\int_D u(z)e^{-|z|^2/2}dz
\int_{K_2}\pr(x+S(m)\in dy,\tau_x>m)u(y)\\
&\hspace{1cm}=
\frac{\varkappa+o(1)}{n^{p/2}}\int_D u(z)e^{-|z|^2/2}dz
\Bigl(\e[u(x+S(m));\tau_x>m]\\
&\hspace{4cm}-\e[u(x+S(m));\tau_x>m,
x+S(m)\in K_1\cup K_3]\Bigr).
\end{align*}
Using the upper bound from (a), we obtain
\begin{align*}
\int_{K_1\cup K_3}&\pr(x+S(m)\in dy,\tau_x>m)
\pr\left(\frac{y+S(n-m)}{\sqrt{n}}\in D,\tau_y>n-m\right)\\
&\le\int_{K_1\cup K_3}\pr(x+S(m)\in dy,\tau_x>m)
\pr\left(\tau_y>n-m\right)\\
&\le\frac{C}{(n-m)^{p/2}}
\int_{K_1\cup K_3}\pr(x+S(m)\in dy,\tau_x>m)u(y+Rx_0)\\
&\le\frac{C}{n^{p/2}}
\e[u(x+Rx_0+S(m));\tau_x>m,
x+S(m)\in K_1\cup K_3].
\end{align*}
If we show that
\begin{equation}
\label{eq:K1_K3}
\e[u(x+Rx_0+S(m));\tau_x>m,
x+S(m)\in K_1\cup K_3]=o(n^{p/2}u(x+Rx_0))
\end{equation}
uniformly in $x\in K$ with $|x|=o\left(\frac{\sqrt{n}}{\log n}\right)$ 
then
$$
\pr\left(\frac{x+S(n)}{\sqrt{n}}\in D,\tau_x>n\right)
=\frac{\varkappa+o(1)}{n^{p/2}}\int_D u(z)e^{-|z|^2/2}dz
\e[u(x+S(m));\tau_x>m].
$$
Combining this with Theorem~\ref{thm:C2}, we get the desired relation.
Thus, we are left to prove \eqref{eq:K1_K3}.

It follows from \eqref{eq:harmonic.boundary} that 
$$
u(y+Rx_0)\le C|y+Rx_0|^{p-1}d(y+Rx_0)
\le C \varepsilon_n^{p-1}\delta_nn^{p/2}
\quad y\in K_1.
$$
Therefore,
$$
\e[u(x+Rx_0+S(m));\tau_x>m,x+S(m)\in K_1]
\le C \varepsilon_n^{p-1}\delta_nn^{p/2}\pr(\tau_x>m).
$$
Applying the upper bound from part (a) and recalling the definition of $m$, we conclude that 
\begin{align}
\label{eq:K1}
\nonumber
&\e[u(x+Rx_0+S(m));\tau_x>m,x+S(m)\in K_1]\\
&\hspace{1cm}\le C \varepsilon_n^{p-1}\delta_n(n/m)^{p/2}u(x+Rx_0)=
C \varepsilon_n^{p-1}u(x+Rx_0).
\end{align}

Now let 
$$
l=\left[\varepsilon_n^3\frac{n}{\log n}\right].
$$
We have, 
\begin{align*}
&\e[u(x+Rx_0+S(m));\tau_x>m,x+S(m)\in K_3]\\
&\le \e[V_\beta(x+Rx_0+S(m));\tau_x>m,x+S(m)\in K_3]\\
&=:E_1+E_2,
\end{align*}
where 
$$
E_1=\e[V_\beta(x+Rx_0+S(m));\tau_x>m,|x+S(l)|>\varepsilon_n\sqrt{n}/2]
$$
and 
$$
E_2=\e[V_\beta(x+Rx_0+S(m));\tau_x>m,|x+S(l)|\le \varepsilon_n\sqrt{n}/2, |S(m)-S(l)|>\varepsilon_n\sqrt{n}/2].
$$
Since $V_\beta(y+S(k)){\rm 1}\{\tau_y>k\}$ is a supermartingale,
\begin{align*}
E_1&\le \e[V_\beta(x+Rx_0+S(l));\tau_x>l,|x+S(l)|>\varepsilon_n\sqrt{n}/2]\\
&\le \e[V_\beta(x+Rx_0+S(l));\tau_x>l,|x+S(l)|>\varepsilon_n\sqrt{n}/2,\max_{k\le l}|X(k)|\le t_n]\\
&\hspace{1cm}+\sum_{k=1}^l\e[V_\beta(x+Rx_0+S(l));\tau_x>l,|x+S(l)|>\varepsilon_n\sqrt{n}/2,|X(k)|> t_n]\\
&\le \e[V_\beta(x+Rx_0+S(l));|x+S(l)|>\varepsilon_n\sqrt{n}/2,\max_{k\le l}|X(k)|\le t_n]\\
&\hspace{1cm}+\sum_{k=1}^l\e[V_\beta(x+Rx_0+S(l));\tau_x>l,
|X(k)|> t_n],
\end{align*}
where 
$$
t_n=\varepsilon_n\frac{\sqrt{n}}{\log n}.
$$
Since $V_\beta(y+Rx_0)\le Cu(y+Rx_0)\le C|y+Rx_0|^p$, 
\begin{align*}
&\e[V_\beta(x+Rx_0+S(l));|x+S(l)|>\varepsilon_n\sqrt{n}/2,\max_{k\le l}|X(k)|\le t_n]\\
&\le C|x+Rx_0|^p\pr(\tau_x>l)+
\e[|S(l)|^p;|S(l)|>\varepsilon_n\sqrt{n}/4,\max_{k\le l}|X(k)|\le t_n]\\
&\le C\frac{|x+Rx_0|^p}{l^{p/2}}u(x+Rx_0)+
\e[|S(l)|^p;|S(l)|>\varepsilon_n\sqrt{n}/4,\max_{k\le l}|X(k)|\le t_n].
\end{align*}
By the Fuk-Nagaev inequality, see Lemma 22 in \cite{DW15},
for $z\ge\frac{\varepsilon_n}{4}\sqrt{n}$, 
$$
\pr(|S(l)|>z,\max_{k\le l}|X(k)|\le t_n)
\le 2d\left(\frac{e\sqrt{d}l}{zt_n}\right)^{z/\sqrt{d}t_n}
\le 2d\left(\frac{e\sqrt{d}\varepsilon_n^2\sqrt{n}}{z}\right)^{\frac{\log n}{4\sqrt{d}}}.
$$
This implies that 
\begin{align*}
&\e[|S(l)|^p;|S(l)|>\varepsilon_n\sqrt{n}/4,\max_{k\le l}|X(k)|\le t_n]\\
&=\left(\frac{\varepsilon_n\sqrt{n}}{4}\right)^p
\pr\left((|S(l)|>\frac{\varepsilon_n\sqrt{n}}{4},\max_{k\le l}|X(k)|\le t_n\right)\\
&\hspace{2cm}+p\int_{\frac{\varepsilon_n\sqrt{n}}{4}}^\infty 
z^{p-1}\pr(|S(l)|>z,\max_{k\le l}|X(k)|\le t_n)dz\\
&\le C(d)\left(\varepsilon_n\sqrt{n}\right)^p
(4e\sqrt{d}\varepsilon_n)^{\frac{\log n}{4\sqrt{d}}}
=o(1).
\end{align*}
As a result, uniformly in $x\in K$ such that $|x|=o(\sqrt{l})$,
\begin{equation}
\label{eq:e11}
\e[V_\beta(x+Rx_0+S(l));|x+S(l)|>\varepsilon_n\sqrt{n}/2,\max_{k\le l}|X(k)|\le t_n]
=o(u(x+Rx_0)).
\end{equation}

Using the supermartingale property of $V_\beta$, we get 
\begin{align*}
&\e[V_\beta(x+Rx_0+S(l));\tau_x>l,|X(k)|> t_n]\\
&\le \e[V_\beta(x+Rx_0+S(k));\tau_x>k,|X(k)|> t_n]\\
&\le C\e[u(x+Rx_0+S(k));\tau_x>k,|X(k)|> t_n]. 
\end{align*}
Using \eqref{diff-bound}, we get 
\begin{align*}
&\e[u(x+Rx_0+S(k));\tau_x>k,|X(k)|> t_n]\\
&\le \e[V_\beta(x+Rx_0+S(k-1));\tau_x>k-1]\pr(|X|> t_n)\\
&+ C\e[|x+Rx_0+S(k-1)|^{p-1};\tau_x>k-1]\e[|X|;|X|>t_n]\\
&+\pr(\tau_x> k-1)\e[|X|^p;|X|>t_n]. 
\end{align*}
By the supermartingale property of $V_\beta$,
$$
\e[V_\beta(x+Rx_0+S(k-1));\tau_x>k-1]\pr(|X|> t_n)
\le Cu(x+Rx_0)\pr(|X|> t_n).
$$
Consequently,
\begin{align}
\label{eq:e12}
\nonumber
&\sum_{k=1}^l
\e[V_\beta(x+Rx_0+S(k-1));\tau_x>k-1]\pr(|X|> t_n)\\
&\le Cu(x+Rx_0)l\pr(|X|> t_n)=o(u(x+Rx_0)),
\end{align}
where the last estimate follows from the bound
$$
l\pr(|X|> t_n)\le \frac{l}{t_n^2\log t_n}\e[|X|^2\log|X|]
=O(\varepsilon_n).
$$
Next,
\begin{align*}
&\e[|x+Rx_0+S(k-1)|^{p-1};\tau_x>k-1]\\
&\le |x+Rx_0|^{p-1}\pr(\tau_x>k-1)
+\e[|S(k-1)|^{p-1};\tau_x>k-1]\\
&\le C|x+Rx_0|^{p-1}\frac{u(x+Rx_0)}{k^{p/2}}
+\e[|S(k-1)|^{p-1};\tau_x>k-1].
\end{align*}
We shall consider the cases $p=2$ and $p>2$ separately.

If $p=2$ then 
\begin{align*}
&\sum_{k=1}^l |x+Rx_0|\frac{u(x+Rx_0)}{k}\e[|X|;|X|>t_n]\\
&\le C|x+Rx_0|u(x+Rx_0)\log l \frac{\e[|X|^2\log|X|;|X|>t_n]}{t_n\log t_n}\\
&\le Cu(x+Rx_0)\frac{|x+Rx_0|\log n}{\sqrt{n}}
\frac{\e[|X|^2\log|X|;|X|>t_n]}{\varepsilon_n}
\end{align*}
If $\varepsilon_n$ converges to zero sufficiently slow then 
$$
\frac{\e[|X|^2\log|X|;|X|>t_n]}{\varepsilon_n}\to0
$$
and, consequently,
\begin{equation*}
\sum_{k=1}^l |x+Rx_0|\frac{u(x+Rx_0)}{k}\e[|X|;|X|>t_n]
=o(u(x+Rx_0))
\end{equation*}
uniformly in $x\in K$ such that $|x|\le\frac{\sqrt{n}}{\log n}$.

If $p>2$ then 
\begin{align*}
&\sum_{k=1}^l |x+Rx_0|^{p-1}\frac{u(x+Rx_0)}{k^{p/2}}
\e[|X|;|X|>t_n]\\
&\le C|x+Rx_0|^{p-1}u(x+Rx_0)\frac{\e[|X|^p;|X|>t_n]}
{t_n^{p-1}}\\
&\le Cu(x+Rx_0)\left(\frac{|x+Rx_0|\log n}{\sqrt{n}}\right)^{p-1}
\frac{\e[|X|^p;|X|>t_n]}{\varepsilon_n^{p-1}}.
\end{align*}
This bound implies that, for any $p\ge2$, 
uniformly in $x\in  K$ with $|x|\le \frac{\sqrt n}{\log n}$, 
\begin{equation}
\label{eq:e13}
\sum_{k=1}^l |x+Rx_0|^{p-1}\frac{u(x+Rx_0)}{k^{p/2}}\e[|X|;|X|>t_n]
=o(u(x+Rx_0))
\end{equation}
provided that $\varepsilon_n\to0$ sufficiently slow. 

In the case $p=2$ we also have 
\begin{align*}
\e[|S(k)|;\tau_x>k]
&\le \sqrt{k\log k}\pr(\tau_x>k)
+\frac{1}{\sqrt{k\log k}}\e[|S(k)|^2,\tau_x>k]\\
&\le C\frac{u(x+Rx_0)\log^{1/2}k}{\sqrt{k}}
+\frac{1}{\sqrt{k\log k}}\e[|S(\tau_x\wedge k)|^2].
\end{align*}
Noting that $|S(k)|^2-dn$ is a martingale and using part (a) of the theorem, we get 
$$
\e[|S(\tau_x\wedge k)|^2]=
d\e[\tau_x\wedge k]\le cu(x+Rx_0)\log k.
$$
Consequently,
$$
\e[|S(k)|;\tau_x>k]
\le C\frac{u(x+Rx_0)\log^{1/2}k}{\sqrt{k}}.
$$
Therefore,
\begin{align*}
&\sum_{k=1}^l \e[|S(k-1)|;\tau_x>k-1]\e[|X|;|X|>t_n]\\
&\hspace{1cm}\le Cu(x+Rx_0)\sqrt{l\log l}\frac{\e[|X|^2\log|X|;|X|>t_n]}{t_n\log t_n}\\
&\hspace{1cm}\le Cu(x+Rx_0)\varepsilon_n^{1/2}
=o(u(x+Rx_0)).
\end{align*}
If $p>2$ then, for every $r\in(0,1)$, 
\begin{align*}
\e[|S(k)|^{p-1};\tau_x>k]
&\le k^{p/2-1/2}\pr(\tau_x>k)
+k^{-r/2}\e[|S(k)|^{p-1+r};\tau_x>k]\\
&\le Cu(x+Rx_0)k^{-1/2}+k^{-r/2}\e[\max_{j<\tau_x}|S(j)|^{p-1+r}].
\end{align*}
Applying \eqref{eq:maximum}, we obtain 
$$
\e[|S(k)|^{p-1};\tau_x>k]
\le Cu(x+Rx_0)k^{-r/2}.
$$
Then, summing up over $k$, we get 
\begin{align*}
&\sum_{k=1}^l \e[|S(k-1)|^{p-1};\tau_x>k-1]\e[|X|;|X|>t_n]\\
&\hspace{1cm}\le Cu(x+Rx_0)l^{1-r/2}t_n^{1-p}\e[|X|^p;|X|>t_n]
=o(u(x+Rx_0))
\end{align*}
if we choose $r$ so that $p+r-3>0$. This shows that 
\begin{align}
\label{eq:e14}
\sum_{k=1}^l \e[|S(k-1)|^{p-1};\tau_x>k-1]\e[|X|;|X|>t_n]
=o(u(x+Rx_0))
\end{align}
for all $p\ge2$.

Considering again the case $p=2$, we have by Lemma~\ref{lem:ub1}, 
\begin{align*}
&\sum_{k=1}^l\pr(\tau_x>k-1)\e[|X|^2;|X|>t_n]\\
&\le Cu(x+Rx_0)\log l\e[|X|^2;|X|>t_n]\\
&\le Cu(x+Rx_0)\frac{\log l}{\log t_n}\e[|X|^2\log|X|;|X|>t_n]\\
&\le Cu(x+Rx_0)\e[|X|^2\log|X|;|X|>t_n]=o(u(x+Rx_0)).
\end{align*}
And if $p>2$ then
\begin{align*}
&\sum_{k=1}^l\pr(\tau_x>k-1)\e[|X|^p;|X|>t_n]\\
&\le Cu(x+Rx_0)\e[|X|^p;|X|>t_n]=o(u(x+Rx_0)).
\end{align*}
Therefore,
\begin{equation}
\label{eq:e15}
\sum_{k=1}^l\pr(\tau_x>k-1)\e[|X|^p;|X|>t_n]
=o(u(x+Rx_0))
\end{equation}
for all $p\ge2$. Combining \eqref{eq:e11}---\eqref{eq:e15}, we conclude that 
\begin{align}
\label{eq:e1}
E_1=o(u(x+Rx_0))
\end{align}
uniformly in $x\in K$ such that $|x|\le\sqrt{n}/\log n$.

We now turn to $E_2$. It follows from \eqref{diff-bound} that $u(x+y)\le u(x)+C|y|^p$ for $|x|\le|y|$. Using this bound, we get
\begin{align*}
&E_2\\
&\le C\e[u(x+Rx_0+S(m));\tau_x>m,|x+S(l)|\le \varepsilon_n\sqrt{n}/2, |S(m)-S(l)|>\varepsilon_n\sqrt{n}/2]\\
&\le C\e[V_\beta(x+Rx_0+S(l));\tau_x>l]\pr(|S(m)-S(l)|>\varepsilon_n\sqrt{n}/2)\\
&\hspace{1cm}+C\pr(\tau_x>l)\e[|S(m-l)|^p;|S(m-l)|>\varepsilon_n\sqrt{n}/2]. 
\end{align*}
Combining the supermartingale property of $V_\beta$ with the Chebyshev inequality, we conclude that
\begin{align*}
&\e[V_\beta(x+Rx_0+S(l));\tau_x>l]\pr(|S(m)-S(l)|>\varepsilon_n\sqrt{n}/2)\\
&\le V_\beta(x)\pr(|S(m)-S(l)|>\varepsilon_n\sqrt{n}/2)
=o(u(x+Rx_0)).
\end{align*}
Therefore,
\begin{align*}
E_2\le o(u(x+Rx_0))+ C\pr(\tau_x>l)\e[|S(m-l)|^p;|S(m-l)|>\varepsilon_n\sqrt{n}/2]\\
\le o(u(x+Rx_0))+ C\frac{u(x+Rx_0)}{l^{p/2}}\e[|S(m-l)|^p;|S(m-l)|>\varepsilon_n\sqrt{n}/2].
\end{align*}
So, it remains to show that 
\begin{align}
\label{eq:e21}
\frac{1}{l^{p/2}}\e[|S(m-l)|^p;|S(m-l)|>\varepsilon_n\sqrt{n}/2]\to0.
\end{align}
Fix some $r>2p\sqrt{d}$. Then, by Corollary 23 in \cite{DW15}
with $y=x/r$,
$$
\pr(|S(k)|>x)\le 2d\left(\frac{er\sqrt{d}k}{x^2}\right)^{r/\sqrt{d}}+k\pr(|X|>x/r).
$$
This implies that 
\begin{align*}
&\e[|S(k)|^p;|S(k)|>\varepsilon_n\sqrt{n}/2]\\
&=(\varepsilon_n\sqrt{n}/2)^p\pr(|S(k)|>\varepsilon_n\sqrt{n}/2)
+p\int_{\varepsilon_n\sqrt{n}/2}^\infty z^{p-1}\pr(|S(k)|>z)dz\\
&\le r^pk\e[|X|^p;|X|>\varepsilon_n\sqrt{n}/2]
+C(p,d,r)(\varepsilon_n^2n)^{p/2}\left(\frac{k}{\varepsilon_n^2n}\right)^{r/\sqrt{d}}.
\end{align*}
Consequently,
\begin{align*}
&\frac{1}{l^{p/2}}\e[|S(m-l)|^p;|S(m-l)|>\varepsilon_n\sqrt{n}/2]\\
&\le
r^p\frac{m}{l^{p/2}}\e[|X|^p;|X|>\varepsilon_n\sqrt{n}/2]
+C(p,d,r)\left(\frac{\varepsilon_n^2n}{l}\right)^{p/2}\left(\frac{m}{\varepsilon_n^2n}\right)^{r/\sqrt{d}}.
\end{align*}
Recalling the definitions of $m$ and $l$, we get 
\begin{align*}
\left(\frac{\varepsilon_n^2n}{l}\right)^{p/2}\left(\frac{m}{\varepsilon_n^2n}\right)^{r/\sqrt{d}}
\le C\varepsilon_n^{-p/2}\log^{p/2}n(\varepsilon_n^2\log^{1/4}n)^{-r/\sqrt{d}}=o(1)
\end{align*}
wenn $\varepsilon_n$ decreases to zero sufficiently slow.

If $p=2$ then, by the Chebyshev inequality,
\begin{align*}
\frac{m}{l^{p/2}}\e[|X|^p;|X|>\varepsilon_n\sqrt{n}/2]
&\le C \log^{3/4}n \varepsilon_n^{-3}\e[|X|^2;|X|>\varepsilon_n\sqrt{n}/2]\\
&\le C \log^{-1/4}n \varepsilon_n^{-3}\e[|X|^2\log |X|;|X|>\varepsilon_n\sqrt{n}/2]=o(1).
\end{align*}
The case $p>2$ is even simpler.
Therefore, $E_2=o(u(x+Rx_0))$. Combining this with~\eqref{eq:e1} 
we obtain~\eqref{eq:K1_K3}. Thus, the  proof is finished. 
\section{Proof of Proposition~\ref{prop:example2}}
By the optional stopping theorem for the martingale $u(x+S(n))$,
\begin{align*}
u(x)&=\e[u(x+S(\tau_x\wedge k)]\\
&=\e[u(x+S(k));\tau_x>k]+\e[u(x+S(\tau_x));\tau_x\le k],
\quad k\ge1.
\end{align*}
Thus, for all $n>m\ge1$,
\begin{align}
\label{ex.6}
\nonumber
E_n-E_m
&=\e[u(x+S(n));\tau_x>n]-\e[u(x+S(m));\tau_x>m]\\
\nonumber
&=\e[-u(x+S(\tau_x));\tau_x\in(m,n]]\\
&:=\Sigma_1+\Sigma_2,
\end{align}
where
$$
\Sigma_1=\sum_{k=m}^{n-1}\sum_{y\in K\cap\{y:|y|\le A\sqrt{n}\}}
\pr(x+S(k)=y,\tau_x>k)\e[-u(y+X);y+X\notin K]
$$
and
$$
\Sigma_2=\sum_{k=m}^{n-1}\sum_{y\in K\cap\{y:|y|> A\sqrt{n}\}}
\pr(x+S(k)=y,\tau_x>k)\e[-u(y+X);y+X\notin\ K].
$$
It is immediate from this representation that the sequence $E_n$ is increasing.

We start with an upper bound for $\Sigma_2$. We first notice that for every $y\in K$ one has  
\begin{align*}
\e[-u(y+X);y+X\notin K]
&=\e[(y_2+X_2)^2-y_1^2;X_2>y_1-y_2]\\
&\le\e[2y_2^2+2X_2^2-y_1^2;X_2>y_1-y_2]\\
&\le |y|^2\pr(X_2>y_1-y_2)+2\e[X_2^2]\\
&\le |y|^2\pr(\tau_y=1)+1.
\end{align*}
Therefore,
\begin{align}
\label{ex.7}
\nonumber
\Sigma_2
&\le \sum_{k=m}^{n-1}\sum_{y\in K\cap\{y:|y|> A\sqrt{n}\}}
\pr(x+S(k)=y,\tau_x>k)\left(|y|^2\pr(\tau_y=1)+1\right)\\
\nonumber
&\le \e\left[|x+S(\tau_x-1)|^2;\tau_x\in(m,n],|x+S(\tau_x-1)|\ge A\sqrt{n}\right]\\
&\hspace{1cm}+
\sum_{k=m}^{n-1}\pr\left(|x+S(k)|\ge A\sqrt{n},\tau_x>k\right).
\end{align}
For every $k<\tau_x$ one has the inequality
$$
|x_2+S_2(k)|<x_1+S_1(k).
$$
This implies that
$$
\max_{k<\tau_x}|x+S(k)|^q 
\le 2^{q/2}\max_{k<\tau_x}|x_1+S_1(k)|^q
$$
for every $q>2$.
Then, by the Burkholder-Davis-Gundy inequality,
\begin{align*}
\e\left[\max_{k<\tau_x}|x+S(k)|^q;\tau_x\le n\right] 
&\le 2^{q/2}\e\left[\max_{k<\tau_x}|x_1+S_1(k)|^q;\tau_x\le n\right]\\
&\le 2^{3q/2}|x|^q+2^{3q/2}\e\left[\max_{k<\tau_x}|S_1(k)|^q;\tau_x\le n\right]\\
&\le C(|x|^q+\e[(\tau_x\wedge n)^{q/2}]).
\end{align*}
According to Lemma~\ref{lem:ub2},
\begin{equation}
\label{ex.8}
\pr(\tau_x>k)\le C(x)\frac{E_k}{k}\le C(x)\frac{E_n}{k},
\quad k\le n.
\end{equation}
This implies that 
$$
\e[(\tau_x\wedge n)^{q/2}]\le C(x)E_n n^{q/2-1}.
$$
As a result we have 
\begin{equation}
\label{ex.9}
\e\left[\max_{k<\tau_x}|x+S(k)|^q;\tau_x\le n\right] 
\le C(x) E_n n^{q/2-1}.
\end{equation}
By the same arguments,
\begin{equation}
\label{ex.10}
\e\left[|x+S(n)|^q;\tau_x>n\right] 
\le C(x)E_n n^{q/2-1}.
\end{equation}
Using the Markov inequality and taking into account \eqref{ex.9},
we obtain 
\begin{align}
\label{ex.11}
\nonumber
&\e\left[|x+S(\tau_x-1)|^2;\tau_x\in(m,n],|x+S(\tau_x-1)|\ge A\sqrt{n}\right]\\
&\hspace{3cm}\le (A\sqrt{n})^{2-q}
\e\left[\max_{k<\tau_x}|x+S(k)|^q;\tau_x\le n\right]
\le \frac{C(x)}{A^{q-2}}E_n.
\end{align}
Moreover, in view of \eqref{ex.10}, for all $k\le n$,
\begin{align*}
\pr\left(|x+S(k)|\ge A\sqrt{n},\tau_x>k\right)
\le (A\sqrt{n})^{-q}\e\left[|x+S(k)|^q;\tau_x>k\right]
\le \frac{C(x)}{A^q}\frac{E_n}{n}.
\end{align*}
Consequently,
\begin{equation}
\label{ex.12}
\sum_{k=m}^{n-1}\pr\left(|x+S(k)|\ge A\sqrt{n},\tau_x>k\right)
\le \frac{C(x)}{A^q} E_n.
\end{equation}
Plugging \eqref{ex.11} and \eqref{ex.12} into \eqref{ex.7},
we have 
\begin{equation}
\label{ex.13}
\Sigma_2\le \frac{C(x)}{A^{q-2}}E_n.
\end{equation}

To estimate $\Sigma_1$ we derive an upper bound for the local probability $\pr(x+S(n)=y,\tau_x>n)$. Set $m=[3n/4]$. Then 
\begin{align*}
&\pr(x+S(n)=y;\tau_x>n)\\
&\hspace{1cm}=\sum_{z\in K}\pr(x+S(m)=z;\tau_x>m)\pr(z+S(n-m)=y;\tau_z>n-m)\\
&\hspace{1cm}=\sum_{z\in K}\pr(x+S(m)=z;\tau_x>m)\pr(y-S(n-m)=z;\tau'_y>n-m)\\
&\hspace{1cm}\le \pr(\tau'_y>n-m)\max_{z\in K}\pr(x+S(m)=z;\tau_x>m),
\end{align*}
where 
$$
\tau'_y=\inf\{k\ge1:y-S(k)\notin K\}.
$$
Replacing $K$ by a half-plane and using  
\cite[Lemma 3]{DW16}, one has 
\begin{equation}
\label{ex.14}
\pr(\tau'_y>n-m)\le\frac{Cd(y)}{\sqrt{n-m}}
\le\frac{C(y_1-|y_2|)}{\sqrt{n}}.
\end{equation}
Furthermore, by Lemma 27 in \cite{DW15},
$$
\max_{z\in K}\pr(x+S(m)=z;\tau_x>m)
\le\frac{C}{m}\pr(\tau_x>m/2).
$$
Taking now into account \eqref{ex.8}, we have
$$
\max_{z\in K}\pr(x+S(m)=z;\tau_x>m)\le C\frac{E_n}{n^2}.
$$
Combining this with \eqref{ex.14}, we arrive at the bound 
\begin{equation}
\label{ex.15}
\pr(x+S(n)=y;\tau_x>n)
\le C(y_1-|y_2|)\frac{E_n}{n^{5/2}},\quad y\in K.
\end{equation}

For every $y\in K$ with $|y|\le A\sqrt{n}$ we have 
\begin{align*}
\e[-u(y+X);y+X\notin K]
&=\e[(y_2+X_2)^2-y_1^2;X_2>y_1-y_2]\\
&\le 2y_2\e[X_2;X_2>y_1-y_2]+\e[X_2^2;X_2>y_1-y_2]\\
&\le 2A\sqrt{n}\e[X_2;X_2>y_1-y_2]+\e[X_2^2;X_2>y_1-y_2].
\end{align*}
Combining this with \eqref{ex.15}, we get 
\begin{align*}
\Sigma_1
&\le
\sum_{k=m}^{n-1}\frac{CE_k}{k^{5/2}}
\sum_{y\in K\cap\{y:|y|\le A\sqrt{n}\}}
(y_1-|y_2|)\left(2A\sqrt{n}E[X_2;X_2>y_1-y_2]+\e[X_2^2;X_2>y_1-y_2]\right)\\
&\le C\frac{E_n}{m^{3/2}}\sum_{y\in K\cap\{y:|y|\le A\sqrt{n}\}}
(y_1-|y_2|)\left(2A\sqrt{n}E[X_2;X_2>y_1-y_2]+\e[X_2^2;X_2>y_1-y_2]\right)\\
&\le C\frac{E_n}{m^{3/2}}A\sqrt{n}
\sum_{y\in K\cap\{y:|y|\le A\sqrt{n}\}}\e[X_2^2;X_2>y_1-y_2].
\end{align*}
The existence of the second moment implies that the sum is $o(n)$.
Therefore,
\begin{equation}
\label{ex.16}
\Sigma_1=o\left(\frac{E_n n^{3/2}}{m^{3/2}}\right).
\end{equation}
Applying this estimate and \eqref{ex.13} to \eqref{ex.6},
we conclude that, if $m\sim cn$ for some $c\in(0,1]$ then
$$
\limsup_{n\to\infty}\left|\frac{E_m}{E_n}-1\right|
\le \frac{C(x)}{A^{q-2}}.
$$
Letting now $A\to\infty$ we finish the proof of (a).

To prove the part (b) we choose $m=m(n)$ such that $\frac{m}{n}\to0$ and $E_m\sim E_n$. 
Then, by the Markov property,
\begin{align}
\label{ex.16a}
\pr(\tau_x>n)
&=\sum_{y\in K}\pr(x+S(m)=y,\tau_x>m)\pr(\tau_y>n-m).
\end{align}
Set 
\begin{align*}
&K_1:=\{y\in K:|y|\le A\sqrt{m},d(y)\le\varepsilon\sqrt{m}\},\\
&K_2:=\{y\in K:|y|\le A\sqrt{m},d(y)>\varepsilon\sqrt{m}\},\\
&K_3:=\{y\in K:|y|> A\sqrt{m}\}.
\end{align*}
By the Brownian approximation, if $\frac{m}{n}\to0$ sufficiently slow,
$$
\pr(\tau_y>n-m)\sim\varkappa\frac{u(y)}{n}
$$
uniformly in $y\in K_2$. Therefore, 
\begin{align}
\label{ex.17} 
\nonumber
&\sum_{y\in K_2}\pr(x+S(m)=y,\tau_x>m)\pr(\tau_y>n-m)\\
&\hspace{1cm}
=\frac{\varkappa}{n}\e[u(x+S(m));y+S(m)\in K_2,\tau_x>m]
+o\left(\frac{E_n}{n}\right).
\end{align}
Applying \eqref{ex.8} and using the Optional Stopping Theorem, 
we obtain 
\begin{align*}
&\sum_{y\in K_3}\pr(x+S(m)=y,\tau_x>m)\pr(\tau_y>n-m)\\
&\hspace{1cm}
\le\frac{C}{n}\sum_{y\in K_3}\pr(x+S(m)=y,\tau_x>m)
\e[u(y+S(n-m));\tau_y>n-m]\\
&\hspace{1cm}
=\frac{C}{n}\sum_{y\in K_3}\pr(x+S(m)=y,\tau_x>m)
\left(u(y)-\e[u(y+S(\tau_y));\tau_y\le n-m]\right)\\
&\hspace{1cm}
\le\frac{C}{n}\e[u(x+S(m));|x+S(m)|>A\sqrt{m},\tau_x>m]\\ 
&\hspace{1.5cm}-\frac{C}{n}\e[u(x+S(\tau_x));\tau_x\in(m,n]].
\end{align*}
It follows from the part (a) that the last expectation is $o(E_n)$.
Consequently,
\begin{align}
\label{ex.18}
\nonumber
&\sum_{y\in K_3}\pr(x+S(m)=y,\tau_x>m)\pr(\tau_y>n-m)\\
&\hspace{1cm}\le 
\frac{C}{n}\e[u(x+S(m));|x+S(m)|>A\sqrt{m},\tau_x>m]
+o\left(\frac{E_n}{n}\right).
\end{align}
By the same arguments,
\begin{align}
\label{ex.19}
\nonumber
&\sum_{y\in K_1}\pr(x+S(m)=y,\tau_x>m)\pr(\tau_y>n-m)\\
&\hspace{1cm}\le 
\frac{C}{n}\e[u(x+S(m));x+S(m)\in K_1,\tau_x>m]
+o\left(\frac{E_n}{n}\right).
\end{align}
Putting together \eqref{ex.16a}---\eqref{ex.19}, we get 
\begin{align}
\label{ex.20}
\nonumber
&\left|\pr(\tau_x>n)-\varkappa\frac{E_n}{n}\right|\\
&\hspace{1cm}\le \frac{C}{n}\e[u(x+S(m));x+S(m)\in K_1\cup K_3,\tau_x>m]
+o\left(\frac{E_n}{n}\right).
\end{align}
If $y\in K_1$ then $u(y)\le C\varepsilon A m$. From this observation and from \eqref{ex.8} we get 
$$
\e[u(x+S(m));x+S(m)\in K_1,\tau_x>m]
\le C\varepsilon A m\pr(\tau_x>m)\le C\varepsilon A E_n.
$$
Furthermore, it follows from \eqref{ex.10} that 
\begin{align*}
&\e[u(x+S(m));x+S(m)\in K_3,\tau_x>m]\\
&\hspace{1cm}\le C(A\sqrt{m})^{2-q}\e[|x+S(m)|^q;\tau_x>m]
\le \frac{C}{A^{q-2}}E_n.
\end{align*}
As a result,
$$
\e[u(x+S(m));x+S(m)\in K_1\cup K_3,\tau_x>m]
\le C\left(A\varepsilon+A^{2-q}\right)E_n
$$
Plugging this into \eqref{ex.20}, we have 
\begin{align*}
\left|\pr(\tau_x>n)-\varkappa\frac{E_n}{n}\right|\le
C\left(A\varepsilon+A^{2-q}\right)\frac{E_n}{n}
+o\left(\frac{E_n}{n}\right).
\end{align*}
Letting first $\varepsilon\to0$ and then $A\to\infty$, we arrive at the desired relation.

If $\e[W_1^2\log(1+|W_1|)]<\infty$ then the finiteness of $\lim_{n\to\infty}E_n$ follows from Theorem~\ref{thm:C2}. Thus, it remains to show that $E_n\to\infty$ for walks with $\e[W_1^2\log(1+|W_1|)]=\infty$.               
It follows from \eqref{ex.6} that 
\begin{align*}
E_{2n}-E_n
&=\e[-u(x+S(\tau_x));\tau_x\in(n,2n]]\\
&=\sum_{k=n}^{2n-1}\int_K\pr(x+S(k)\in dy,\tau_x>k)\e[-u(y+X);y+X\notin K].
\end{align*}
Repeating the arguments from the proof of~\eqref{ex.5a} and taking into account
\eqref{ex.5b}, we get 
$$
E_{2n}-E_n\ge \frac{1}{4}\e[W_1^2;W_1>\sqrt{2}n^2]
\sum_{k=n}^{2n-1}\pr(\tau_x>k).
$$
Using parts (a) and (b), we conclude that there exists $n_0$ such that
$$
E_{2n}-E_n\ge\frac{\varkappa}{8}\e[W_1^2;W_1>\sqrt{2}n^2]E_n,\quad n\ge n_0.
$$
In other words,
$$
E_{2n}\ge\left(1+\frac{\varkappa}{8}\e[W_1^2;W_1>\sqrt{2}n^2]\right)E_n,
\quad n\ge n_0.
$$
Therefore,
$$
\lim_{n\to\infty}E_n\ge 
\prod_{m=0}^\infty\left(1+
\frac{\varkappa}{8}\e[W_1^2;W_1>\sqrt{2}n_0^22^{2m}]\right).
$$
But the product on the right hand side is infinite if $\e[W_1^2\log(1+|W_1|)]=\infty$. This completes the proof of the proposition.

\appendix 

\section{Construction of a positive supermartingale for a random  walk on the positive half line}\label{sec:super_iid} 
Here we will construct explicitly a   positive supermartingale for a one-dimensional 
random  walk on a positive half-line. 
This construction has already appeared in~\cite{PW21}. 

Let $X,X_1,X_2,\ldots $ be i.i.d.\ random variables with the distribution
function $F$, zero mean $\e[X_1]=0$ and finite variance $\sigma^2:=\e[X_1^2]<\infty$. 
Consider the random walk $S_0=0$ and
\[
S_n=X_1+\cdots+X_n,\quad  n\ge 1.
\]
Let
\[
\tau_x:=\inf\{n\ge 1: x+S_n<0\}.
\]
Fix some positive constants $A$, $R$ and define functions for $x\ge 0$,
\begin{align*}
  \beta(x)&=\pr(X\le -x) = F(-x),\\ 
   \beta^I(x)& =\e[{(x+X_1)}^-] = \int_x^\infty \beta(y)dy,\\
\beta^{II}(x)&= \int_x^\infty \beta^I(y)dy,\quad m(x)=A\int_0^x   \beta^{II}(x) dy
\end{align*}
and
\begin{equation}
  \label{eq:defn.v.positive.half.line}
V(x) = \begin{cases}
          x+R+m(x),&x\ge 0\\
          0,&\mbox{otherwise}.
       \end{cases}
\end{equation}
\begin{lemma}\label{lem:drift_positive_half_line}
  Let $\e[X_1]=0$ and $\e[X_1^2]<\infty$. Then,
  for the function $V$ defined in~\eqref{eq:defn.v.positive.half.line}, 
  \begin{equation}
    \label{eq:drift.positive.half.line}
    \e[V(x+X_1);\tau_x>1]\le  V(x)-\beta(x), \quad x\ge 0.
  \end{equation}
\end{lemma}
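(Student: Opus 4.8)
The plan is to verify the supermartingale inequality by a direct Taylor-type expansion of $V$, exploiting the explicit construction of the correction term $m(x)$ so that the drift of the linear part $x+R$ is exactly cancelled modulo the desired $-\beta(x)$. First I would record the elementary facts that $\beta$, $\beta^I$, $\beta^{II}$ are all non-negative, non-increasing, and that $(\beta^I)'(x)=-\beta(x)$, $(\beta^{II})'(x)=-\beta^I(x)$, $m'(x)=A\beta^{II}(x)$ on $x\ge 0$; also $\beta^I(0)=\e[X^-]<\infty$ and $\beta^{II}(0)<\infty$ because $\e[X^2]<\infty$, so $m$ is bounded, Lipschitz, and $V$ is well-defined and non-negative. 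These give $V$ the monotonicity needed to control boundary effects.

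Next I would split $\e[V(x+X);\tau_x>1]=\e[V(x+X);x+X\ge 0]$ according to the sign and size of $X$. On the event $\{x+X\ge 0\}$ write
\[
V(x+X)-V(x)=X+\bigl(m(x+X)-m(x)\bigr).
\]
Since $\e[X]=0$, the contribution of the linear term over the \emph{whole} line is zero, so over $\{x+X\ge 0\}$ it equals $-\e[X;x+X<0]=\e[(x+X)^-]-x\pr(x+X<0)\le\e[(x+X)^-]=\beta^I(x)$. For the $m$-term, because $m$ is $C^1$ with $m'=A\beta^{II}$ non-increasing, for $h\ge -x$ one has the pointwise bound $m(x+h)-m(x)\le A\beta^{II}(x)\,h^+$ (using monotonicity of $m'$ on each side of $0$ and $m'\ge0$), hence $\e[m(x+X)-m(x);x+X\ge 0]\le A\beta^{II}(x)\,\e[X^+]=A\beta^{II}(x)\,\e[X^-]=A\beta^{II}(x)\beta^I(0)$. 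Combining,
\[
\e[V(x+X);\tau_x>1]-V(x)\le \beta^I(x)-x\pr(x+X<0)+A\beta^I(0)\beta^{II}(x)-A\!\int_0^x\!\beta^{II}(y)dy\cdot 0,
\]
wait — more carefully, the term we must kill is $\beta^I(x)$, which is \emph{not} bounded by $\beta(x)$ in general, so a cruder split is needed.

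The key step is therefore a sharper treatment of the linear term. Instead of bounding $-\e[X;x+X<0]$ by $\beta^I(x)$, I would keep it as is and absorb it into the decrement of $m$. Concretely: the only reason $V$ can increase is that $m$ increases, but the construction makes $m$ grow slowly; meanwhile the event $\{x+X<0\}$ both produces the "missing" linear mass $\beta^I(x)$ and, more importantly, we still have the full $-\beta(x)$ to spare. The cleanest route is to choose $A$ once and for all (independently of $x$, $R$) large enough that $A\beta^{II}(0)\beta^I(0)$ together with the telescoping identity $\beta^I(x)=\beta(x)+\int_x^\infty(\beta(y)-\beta(x))\,dy$... — actually the intended argument is: since $V(x+X)=0$ when $x+X<0$, write $\e[V(x+X);\tau_x>1]=\e[V(x+X)]-\e[V(x+X);x+X<0]=\e[V(x+X)]$ because $V$ vanishes there, and then use that $y\mapsto V(y)$ extended by $0$ is \emph{concave-like from above} in the sense that $\e[V(x+X)]\le V(x)+V'(x)\e[X]-(\text{defect at the boundary})$. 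The defect at the boundary, coming from the kink of $V$ at $0$ (value jumps from $R>0$ down to $0$), contributes exactly a term of order $\pr(x+X<-\cdot)$ that one shows dominates $\beta(x)$ once $R$ is large; meanwhile $m$ is engineered so that $\e[m(x+X)]-m(x)\le$ (the part of that boundary defect not already used), which is where $\beta^{II}$ and the constant $A$ enter. I would make this precise by the second-order Taylor bound with integral remainder for the smooth part $x+R+m(x)$ on $\{x+X\ge 0\}$ and an explicit computation of $\e[(x+R+m(x+X))\mathbf{1}\{x+X\ge0\}]$ using Fubini to re-express everything through $\beta,\beta^I,\beta^{II}$, then collecting terms.

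The main obstacle I anticipate is exactly this bookkeeping at the boundary: showing that the negative contribution from $V$ vanishing on $\{x+X<0\}$, minus the positive drift of $m$, leaves at least $\beta(x)$ — uniformly in $x\ge0$ and with a \emph{single} choice of the constant $A$ (the role of $R$ being only to keep $V\ge0$, or possibly also to handle small $x$). This requires the inequality $x\,\pr(x+X<0)+R\,\pr(x+X<0)+\bigl(\text{something}\bigr)\ge \beta(x)+A\beta^I(0)\beta^{II}(x)$, and the delicate point is that $\beta^{II}(x)\to0$ as $x\to\infty$ only like $o(1/x)$-type rates from $\e[X^2]<\infty$, so one must check the $A\beta^I(0)\beta^{II}(x)$ term is genuinely absorbed rather than dominating near infinity; I expect this is handled by noting $R\,\pr(x+X<0)=R\beta(x)\ge\beta(x)$ once $R\ge1$, and $x\beta(x)\le\beta^I(x)$ plus $\beta^{II}$ decays fast enough that choosing $A$ small (not large!) suffices — I would determine the correct direction of the inequality on $A$ during the computation and state the precise smallness/largeness condition there.
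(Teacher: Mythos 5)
There is a genuine gap, and it sits at exactly the spot you flagged as the crux. You correctly reduce the problem to
\[
\Delta(x)=\beta^I(x)-R\beta(x)+\e\big[m(x+X_1)-m(x);X_1>-x\big],
\]
and you correctly observe that $\beta^I(x)$ is \emph{not} controlled by $\beta(x)$, so it must be cancelled by the $m$-term. But your bound on the $m$-term, $\e[m(x+X)-m(x);x+X\ge0]\le A\beta^{II}(x)\e[X^+]$, is obtained by noting that $m'\ge0$ and $m'$ is nonincreasing, hence $m(x+h)-m(x)\le A\beta^{II}(x)h^+$. This discards the entire \emph{negative} contribution coming from $X\in(-x,0)$, where $m$ decreases. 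That negative contribution is precisely what must do the work: the paper integrates $\e[m(x+X)-m(x);X>-x]$ by parts twice and isolates from $\int_{-x}^0\beta^I(-y)\,m''(x+y)\,dy$ a term of the form $-\tfrac{A\sigma_-^2}{2}\beta^I(x)$ (up to a $2A\beta^I\beta^{II}$ error), which with $A$ chosen so that $\tfrac{A\sigma_-^2}{2}\ge 2$ dominates the $+\beta^I(x)$. Your upper bound is therefore in the wrong direction: it can only show $\Delta(x)\le\beta^I(x)-R\beta(x)+A\beta^I(0)\beta^{II}(x)$, which does not go to anything $\le-\beta(x)$ for large $x$ because $\beta^I(x)$ need not vanish faster than $\beta(x)$.

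Two further symptoms confirm the plan is off track. You suggest near the end that $A$ should be taken \emph{small}; in fact the cancellation mechanism requires $A$ large enough relative to $1/\sigma_-^2$ (the paper fixes $A=4/\sigma_-^2$), and $R$ is then enlarged to handle a compact range of $x$ and to produce the final $-\beta(x)$. You also invoke $x\beta(x)\le\beta^I(x)$, which is false in general (take $\beta$ with very fast decay: then $\beta^I(x)=o(x\beta(x))$ is impossible, but e.g. $\beta(y)=e^{-y}$ gives $\beta^I(x)=e^{-x}$ and $x\beta(x)=xe^{-x}>\beta^I(x)$ for $x>1$). Your "concave-like from above'' and "boundary defect'' remarks gesture at the right phenomenon but stop short of a computation; the missing step is exactly the double integration by parts that converts the second difference of $m$ on $(-x,0)$ against $F(dy)$ into $-A\int_0^x\beta^I(y)\beta^I(x-y)\,dy$, which is then bounded below in absolute value by $\approx A\sigma_-^2\beta^I(x)/2$ using monotonicity of $\beta^I$. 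Without that step the proof does not close.
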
  
\begin{proof}
  Put
  \[
  \Delta(x):=\e [V(x+X_1);\tau_x>1]- V(x). 
  \]
  Then, 
\begin{align*}
 \Delta(x)&=\e[x+X_1,X_1>-x]-x-R\pr(X_1\le-x)+\e[m(x+X_1),X_1>-x]-m(x)\\
  &=-\e[x+X_1,X_1\le -x] -R\beta(x)-m(x)\beta(x)+\int_{-x}^\infty F(dy)(m(x+y)-m(x))\\
  &=\beta^{I}(x)-R\beta(x)-m(x)\beta(x)\\
  &\hspace{1cm}+\int_{-x}^0F(dy)(m(x+y)-m(x))+\int_{0}^\infty F(dy)(m(x+y)-m(x)).
\end{align*}
Integrating twice the integrals by parts we obtain,
\begin{align*}
  \Delta(x)&=\beta^{I}(x)-R\beta(x)-\int_{-x}^0dy F(y)m'(x+y)+\int_{0}^\infty dy \overline F(y)m'(x+y)\\  
  &=\beta^{I}(x)-R\beta(x)-m'(x) \beta^I(0)+A\beta^I(x)\sigma_-^2/2\\
&+\int_{-x}^0dy \beta^I(-y)m''(x+y)
  +m'(x)\overline F^I(0)+\int_{0}^\infty dy \overline F^I(y) m''(x+y),
\end{align*}
where $\sigma_-^2 = \e[{(X_1^-)}^2] $. 
Since $\e[X_1]=0$,
\[
\beta^I(0)-\overline F^I(0)=0.
\]
Therefore,
\begin{align*}
&\Delta(x)=\beta^{I}(x)-R\beta(x)+A\beta^I(x)\frac{\sigma_-^2}{2}\\
&\hspace{1cm}+\int_{0}^\infty dy \overline F^I(y) m''(x+y)+\int_{-x}^0dy \beta^I(-y)m''(x+y).
\end{align*}
We can estimate the second integral as follows,
\begin{align*}
\int_{-x}^0dy \beta^I(-y)m''(x+y) 
&=-A\int_0^{x}dy \beta^I(y)\beta^I(x-y)\\
&=-2A \int_0^{x/2}dy \beta^I(y)\beta^I(x-y)\\
&\le -2A \beta^I(x)\left(\beta^{II}(0)-\beta^{II}(x/2)\right)\\
&=-A\sigma_{-}^2\beta^I(x)+2A\beta^I(x)\beta^{II}(x/2).
\end{align*}
Noting now that $m''$ is negative and, consequently, $\int_{0}^\infty dy \overline F^I(y) m''(x+y)\le0$,
we infer that
\[
\Delta(x)\le \beta^{I}(x)-R\beta(x) -\frac{A}{2}\sigma_-^2\beta^I(x)+2A\beta^I(x) \beta^{II}(x).
\]
Choosing now
\[
A=\frac{4}{\sigma_-^2}
\]
we arrive at the inequality
\begin{equation}\label{hF:eq1}
\Delta(x)\le 2A\beta^I(x) \beta^{II}(x)-\beta^I(x)-R\beta(x)
\end{equation}
The finiteness of $\mathbf{E}{(X_1^{-})}^2$ implies that $\beta^{II}(x)\to0$ as $x\to\infty$.
In particular, there exists $x_0$ such that $2A\beta^I(x) \beta^{II}(x)-\beta^I(x)\le 0$
for all $x\ge x_0$. Increasing $R$, we can make the right hand side in~\eqref{hF:eq1}
negative for $x<x_0$. Increasing $R$ by $1$ we can achieve that
$\Delta(x)\le -\beta(x)$ for all $x\ge0$.
\end{proof}
Lemma~\ref{lem:drift_positive_half_line} immediately implies that 
the sequence $V(x+S_n){\mathbb I}\{\tau_x>n\}$ is a positive supermartingale. As a consequence,
\[
\mathbf{E}[V(x+S_n);\tau_x>n]\le V(x),\quad n\ge1.
\]
Since $V(y)>y$ for all $y>0$, we also have the inequality
\begin{equation}
\label{hF:eq2}
\mathbf{E}[x+S_n;\tau_x>n]\le V(x),\quad n\ge1.
\end{equation}
By the martingale property of $S_n$,
\begin{align*}
x=\mathbf{E}(x+S_{\tau_x\wedge n})
=\mathbf{E}[x+S_{\tau_x};\tau_x\le n]+\mathbf{E}[x+S_n;\tau_x>n].
\end{align*}
From this equality and~\eqref{hF:eq2} we obtain
\[
-\mathbf{E}[x+S_{\tau_x};\tau_x\le n]\le V(x)-x=m(x)+R,\quad n\ge1.
\]
Letting here $n\to\infty$ we finally get the integrability of the overshoot:
\[
-\mathbf{E}[x+S_{\tau_x}]\le m(x)+R.
\]

{\bf Acknowledgment.} 
The authors gratefully acknowledge hospitality of the Saint-Petersburg Department 
of Steklov Institute, where a significant part of this work has been done.  
Special thanks are due to Elena Skripka for the smooth organisation of the visit.


    \end{document}